\theoremstyle{plain}
\newtheorem{thm}{Theorem}[section]
\newtheorem{lem}{Lemma}[section]
\newtheorem{cor}{Corollary}[section]
\newtheorem{prop}{Proposition}[section]
\newtheorem{rem}{\textit{Remark}}[section]
\newtheorem{defn}{Definition}[section]
\theoremstyle{definition}
\theoremstyle{remark}
\numberwithin{equation}{section}
\numberwithin{equation}{section}
\DeclareMathOperator{\sech}{\mathrm{sech}}
\DeclareMathOperator{\diag}{\mathrm{diag}}
\newcommand\underrel[3][]{\mathrel{\mathop{#3}\limits_{%
			\ifx c#1\relax\mathclap{#2}\else#2\fi}}}
\providecommand{\abs}[1]{\left\lvert#1\right\rvert}
\providecommand{\norm}[1]{\left\lVert#1\right\rVert}
\title[Belinski-Zakharov Einstein Field Equations]{Global existence and long time behavior in Einstein-Belinski-Zakharov soliton spacetimes}
\author[Claudio Mu\~noz]{Claudio Mu\~noz}  % in alphabetical order
	\address{Departamento de Ingenier\'{\i}a Matem\'atica and Centro
de Modelamiento Matem\'atico (UMI 2807 CNRS), Universidad de Chile, Casilla
170 Correo 3, Santiago, Chile.}
	\email{cmunoz@dim.uchile.cl}
	\thanks{C.M. was partially funded by Chilean research grants FONDECYT 1191412, 1231250, and Centro de Modelamiento Matemático (CMM), ACE210010 and FB210005, BASAL funds for centers of excellence from ANID-Chile.}%#£
\author[J. Trespalacios]{Jessica Trespalacios*}
\address{Departamento de Ingenier\'{\i}a Matem\'atica, Universidad de Chile, Casilla
	170 Correo 3, Santiago, Chile.}
\email{jtrespalacios@dim.uchile.cl}
\thanks{J.T.'s work was funded in part by the National Agency for Research and Development (ANID)/ DOCTORADO NACIONAL/2019 -21190604, Chilean research grants FONDECYT 1191412 and 1231250, Centro de Modelamiento Matemático (CMM), ACE210010 and FB210005, BASAL funds for centers of excellence from ANID-Chile.}
\subjclass{Primary: 35Q76. Secondary: 35Q75}
\keywords{Einstein equations, Belinski, Zakharov, global existence, decay, metric}	
\date{\today}
\begin{document}

	\begin{abstract}
		We consider the vacuum Einstein field equations under the Belinski-Zakharov symmetry, {\color{blue} which leaves the problem as a 1+1 dimensional quasilinear system of PDEs.  Depending on the chosen signature of the metric, these spacetimes contain most of  the well-known special solutions in General Relativity}. In this paper, {\color{blue} we consider the case of cosmological metrics, in the Belinsky-Zakharov notation}, and prove global existence of small Belinski-Zakharov spacetimes under a natural nondegeneracy condition. We also construct new energies and virial functionals to provide a description of the energy decay of smooth global cosmological metrics inside the light cone. Finally, some applications are presented in the case {\color{blue} of the particular metrics called} generalized Kasner solitons. %, a first step towards the study of its nonlinear stability.
		
	\end{abstract}
	
	\maketitle
	
	%\tableofcontents
	
	\section{\color{blue} Introduction}\label{Sec:1}
	
The \textit{Einstein vacuum} equations determine a $4-$dimensional manifold $\mathcal{M}$  with a Lorentzian metric $\tilde{g}$ with vanishing \textit{Ricci} curvature
\begin{equation}\label{EV}
		\text{R}_{\mu\nu} (\widetilde g)=0.
	\end{equation}
These equations can be written under certain gauge choices as a {\color{blue}particular} system of quasilinear equations. This is a remarkable aspect of the general relativity theory, in contrast to Newton's gravitation theory: the equation \eqref{EV} is nontrivial even in the absence of matter. The focus of this paper is the understanding of outstanding solutions of \eqref{EV} in the setting of Belinski-Zakharov spacetimes. 

{\color{blue}
More precisely, the main motivation of this work is to provide a rigorous analytical framework for understanding the global dynamics and long-time behavior of Einstein vacuum spacetimes within the Belinski-Zakharov class, in the case of nearly constant determinant metrics. These solutions, which include physically relevant geometries such as black holes, solitons of Kasner type, and gravitational waves, have been extensively studied from the viewpoint of integrability, finding generalized Lax-pair structures and using well-designed dressing type techniques. However, from the point of view of classical PDE techniques, their stability and global-in-time properties remain not well understood. By combining techniques from the analysis of nonlinear PDEs with the algebraic structure of integrable systems, we aim to establish new global existence and decay mechanisms.
}	

\subsection{The Belinski-Zakharov integrability ansatz}
{\color{blue} Symmetry has been a successful method for understanding complicated dynamics in a series of works related to dispersive models, see e.g. \cite{fustero1986einstein, Einstein1937, silva2019scaling} and references therein.} Belinski and Zakharov \cite{belinskii1978integration,belinski1979stationary} (see also Kompaneets \cite{kompaneets} and \cite{Zakharov1978,zakharov1980int}) proposed an application for the Inverse Scattering Transform for spacetimes that admit two commuting Killing vector fields. Using this ansatz, Einstein's vacuum field equations can be recast as a 1+1 system of four {\color{blue} quasilinear wave equations}.  In this paper, we will follow their ansatz and describe rigorously symmetric spacetimes and their long-time dynamics.

Belinski and Zakharov recalled the particular case in which the metric tensor $\widetilde g_{\mu \nu}$ depends on two variables only, which correspond to spacetimes that admit two commuting Killing vector fields, i.e., an Abelian two-parameter group of isometries. This assumption allowed them to propose the so-called Belinski-Zakharov transform to obtain solitonic solutions, so-called “gravisolitons".   The Gravisolitons have an unusual number of features; however, it is known that spacetimes highly important in physics and cosmology applications, such as Kasner spacetimes, can be identified as gravisolitons \cite{belinski2001gravitational, belinskii1978integration}. {\color{blue} Subsection \ref{Disc} contains further details on these solutions.}

Following Belinski and Zakharov, we will choose here a metric tensor depending on a time-like coordinate $x^0$, and one space-like coordinate $x^1$ (possibly nonnegative). This choice, {\color{blue}as will be clear below}, corresponds to considering non-stationary gravitational fields, often referred to as Gowdy models \cite{Gowdy},  even when no compact spatial sections are considered. They are also referred to as generalized Einstein–Rosen spacetimes \cite{carmeli1984einstein}. In the particular case where one has diagonal metrics, these are called Einstein-Rosen spacetimes, first considered in 1937 by Einstein and Rosen \cite{Einstein1937}.

Set $x^0 = t$ and $x^1= x$. In this case, the coordinates are typically expressed using Cartesian coordinates in which $x^a,x^b \in\{y,z\}$, where the Latin indexes $a,b\in\{2,3\}$. Then, the spacetime interval {\color{blue} associated to the metric tensor $\widetilde g_{\mu \nu}$} is a simplified block diagonal form\footnote{\color{blue} In general, we consider a spacetime interval of a block diagonal form
$ds^2= g_{ij}dx^idx^j+g_{ab}dx^adx^b$,
where we use the summation convention on $i,j,k,...\in \{0, 3 \}$ and $a,b,c,...\in \{1, 2\}$, and we assume that $g_{ij}$ and $g_{ab}$ depend only on the coordinates $x^k$.  By using a proper change of coordinates, one can always cast the matrix $g_{ij}$ locally to the conformally flat form $g_{ij} = f\eta_{ij}$ where $f>0 $ and $\diag(-e,1)$
is a constant matrix with $e=\pm 1$. Note that $\eta_{\mu \nu}$ is the metric of a flat Minkowski 4-dimensional spacetime, while $\eta_{ij}$ is the metric of its 2-dimensional subspace. $\eta^{ij}$ denotes the inverse of $\eta_{ij}$. We will denote the determinants of each of the $2\times 2$ blocks by
$\det (g_{ij}):= -ef^2,$  and  $\det (g_{ab}):= e\alpha^2$.  Their product is the determinant of the full metric tensor
$\det \tilde g_{\mu \nu}=-f^2\alpha ^2.$
%From now on, we denote $g:=(g_{ab})$, thus, the matrix representation of the metric is
%\begin{equation}\label{matrizR}
%		\tilde{g}_{\mu \nu}=\left( \begin{array}{cccc}
%			-ef & 0 & 0 & 0\\
%			0&g_{11} & g_{12} &0\\
%			0 &g_{21} & g_{22} & 0\\
%			0 &0 & 0 & ef
%		\end{array}\right).
	%\end{equation}
	}:
\begin{equation}\label{intervalo}
	ds^2=f(t,x)(dx^2-dt^2)+g_{ab}(t,x)dx^adx^b.
\end{equation}
Recall that repeated indices mean {\color{blue} summation}, following the classical Einstein convention. Here, with a slight abuse of notation, we shall denote $g=g_{ab}$. Due to the axioms of general relativity, the tensor $g$ must be real and symmetric. {\color{blue} We will denote the determinants of $g$ by $ \det (g_{ab}):= \alpha^2,$ then,  the determinant of the full metric tensor $\tilde g$ is given by  $\det (\tilde g_{\mu \nu})=-f^2\alpha ^2.$}

In order to reduce the Einstein vacuum equations \eqref{EV}, one needs to compute the Ricci curvature tensor in terms of the components of the metric $g=g_{ab}$. The consideration of the metric in the form \eqref{intervalo} leads to components  $\text{R}_{0a}$ and $\text{R}_{3a}$ of the Ricci tensor that are identically zero. Therefore, one can see that Einstein's vacuum equations \eqref{EV} decompose into two sets of equations. The first one follows from $\text{R}_{ab} = 0$; this equation can be written as the single tensor equation
\begin{equation}\label{eq:PCE2}
	\partial_t\left(\alpha \partial_t g g^{-1}\right) -\partial_x\left(\alpha  \partial_x g g^{-1} \right)=0, \quad \det g =\alpha ^2.
\end{equation}
We shall refer to this equation as the \textit{reduced Einstein equation.} The trace of the equation \eqref{eq:PCE2} reads 
\begin{equation}\label{onda}
	\partial_{t}^2 \alpha -\partial_{x}^2 \alpha=0.
\end{equation}
Therefore, the function $\alpha(t,x)$ satisfies the 1D wave equation. {\color{blue}The equations \eqref{eq:PCE2}-\eqref{onda}} may be recast as equivalent to the “dynamical part” of the Einstein equations.  The second set of equations expresses the metric coefficient $f(t,x)$ in terms of explicit terms of $\alpha$ and $g$. % where $\det \widetilde g_{\mu \nu}:= - f^2\alpha^2 $. %For the moment, this expression is not relevant in this introduction, for more details see \cite{belinski2001gravitational}.  

%({\color{blue} Propongo mover esta parte en la continuación de la discusión arriba y posterior incluir las coordenadas geométricas} )

Using inverse scattering techniques, Belinski and Zakharov \cite{belinskii1978integration} considered \eqref{eq:PCE2}, giving the first foundational results, see also \cite{Zakharov1979}. They proposed the application of the inverse scattering method to the equations of general relativity and the procedure of calculating exact solitonic solutions of the equations. They introduce a Lax-pair for \eqref{eq:PCE2}-\eqref{onda}, together with a general method for solving it. Localized and multi-coherent structures were found, but they are not solitons in the standard sense, unless $\alpha$ is constant. A more in-depth study on the subject is also made in \cite{belinskii1978integration, belinski2001gravitational}. More recently, Hadad \cite{yaronhadad_2013} explored the Belinski-Zakharov transformation for the 1+1-dimensional setting, obtaining explicit formulae for solutions constructed on arbitrary diagonal backgrounds, in particular on the Einstein-Rosen background. 

%With the detection of gravitational waves obtained by the twin LIGO interferometers and their description as a merger of two black holes, the study of gravitational soliton dynamics has gained huge importance. It should be noted that the class of gravitational soliton solutions, as mentioned above, includes cosmological solutions which describe non-homogeneous cosmological models, i.e. waves propagating with subluminal velocity.

 The local behavior of the spacetime described before is defined by the function $\alpha$. In our setting, $\alpha$ will be an always positive and bounded function. These characteristics will be provided by  the initial conditions that will be imposed on the problem. The gradient of the function $\alpha(t,x)$ can be \textit{timelike, spacelike} or \textit{null}. The case where $\alpha$ is spacelike everywhere in spacetime $((\partial_x \alpha)^2-(\partial_t \alpha)^2 >0)$  corresponds to spacetimes said ``with cylindrical symmetry'', which corresponds to  the Einstein-Rosen spacetime, for example. They give an approach to the description of gravitational waves. When the gradient of $\alpha$ is globally null, $((\partial_x \alpha)^2-(\partial_t \alpha)^2 =0)$, it corresponds to the plane-symmetric waves. Finally, the last case, when the gradient of $\alpha$ is globally timelike  $((\partial_x \alpha)^2-(\partial_t \alpha)^2 <0)$ is used to describe cosmological models and colliding gravitational waves, see \cite{carmeli1984einstein,belinski2001gravitational, Belinsky, Einstein1937}.  It is precisely the {\color{blue} timelike case} that is the focus of this work. This classification for the gradient of the function $\alpha$ is necessary in order to propose an appropriate definition of energy and to be capable of giving a description of the decay of the solution associated with the system.

 In a previous work \cite{JT}, one of us considered the case when $\alpha$ is a constant function. Such consideration simplified the system \eqref{sistema1} and identified it with the Principal Chiral Field model (PCF). This approach allowed us to give a first global existence result and local decay in space. It should be noted that, in the case of constant $\alpha$, the results obtained cannot be extrapolated to the case of the Einstein equation in vacuum since essentially PCF is not exactly the case $\alpha=const.$ in \eqref{sistema1}, but instead one has to completely eliminate the equation for $f$. Recently \cite{AMT}, the case of nondegenerate solitons of the PCF equation was considered, and stability was proved in a class of decaying perturbations satisfying certain small energy conditions. 

{\color{blue}
\subsection{Main results, rough version}
A different situation is obtained when considering the case in which $\alpha(t,x)$ is a more general function; in this case, the results are completely identifiable with the Einstein equation, so it automatically becomes a more interesting and complex problem to analyze. Unfortunately, we are forced to consider only half of the $\alpha$ axis because, in general, the points $\alpha=0$  correspond to the physical singularity through which the metric cannot be extended \cite{Belinsky, belinski2001gravitational}. Therefore, we will assume $\alpha$ closer to the value 1. Our main results are a detailed long time description of the Einstein-BZ equations, specially from the point of view of the long time asymptotics, and are stated as follows.

\begin{thm}\label{MT000}
Assume that $\alpha$ is a suitable small perturbation of the constant state 1, and $g$ follows the geometric decomposition
\[%begin{equation}\label{diag1}
	g=\alpha\left( \begin{array}{cc}
		\cosh{\Lambda} +\cos 2\phi \sinh{\Lambda} &  \sin 2\phi \sinh{\Lambda} \\
		\sin 2\phi \sinh{\Lambda}     &  \cosh{\Lambda}-\cos 2\phi \sinh{\Lambda} 
	\end{array} \right),
\]%end{equation}
for unknown functions $\Lambda,\phi$, with $\Lambda$ far away from zero. Then the following are satisfied:
\begin{enumerate}
\item \eqref{eq:PCE2}-\eqref{onda} has globally defined small solutions. 
\item If additionally $\alpha$ is time-like, in the sense that 
\[
\begin{aligned}
& \alpha(t,x)>0 ,\quad \partial_t \alpha(t,x) >0,\\  
& (\partial_t \alpha)^2(t,x)-(\partial_x \alpha)^2(t,x)<0, \quad \forall (t,x)\in [0,\infty)\times \mathbb R. 
\end{aligned}
\]
any globally defined and sufficiently decaying solution $(\Lambda,\phi)$ has a conserved energy. This energy differs from the classical energy and momentum, which are zero for the Einstein's field equations.
\item Every bounded in time, finite energy solution to \eqref{eq:PCE2}-\eqref{onda} with $\alpha>c_0>0$ and $\partial_t\alpha$ rapidly decaying, uniformly in time, satisfies the following decay estimate: for each $|v|<1$,
\[
\lim_{t\longrightarrow +\infty} \int_{|x-vt|\lesssim t(\log t)^{-2} } \left[ (\partial_t\Lambda)^2+(\partial_x\Lambda)^2+\sinh^2(\Lambda)\Big((\partial_t \phi)^2+(\partial_x\phi)^2\Big) \right] dx=0,
\]
under the $(\Lambda,\phi)$ coordinates.
\end{enumerate}
\end{thm}

The previous statement shows that Einstein's equations under the Belinski-Zakharov symmetry ansatz obey a well-defined energy-like structure, with escape of energy to infinity. This is proved by classical virial methods. The main results in their explicit form are further developed in Section \ref{Sec:2}. Item (1) above corresponds to Theorem \ref{GLOBAL0}; item (2) corresponds to Theorem \ref{MT20}, with energy given by \eqref{EnergyF}, and item (3) is developed in Theorem \ref{MT2}. Some applications to certain soliton solutions of Kasner type are given in Theorem \ref{MT3}. 
}

\subsection{Discussion}\label{Disc}

{\color{blue} Let us discuss here the validity of the BZ symmetry ansatz. Salam and Strathdee \cite{Salam} discussed black holes as possible solitons. It is important to recall that the structure of the metric \eqref{intervalo} is not restrictive, since, from the physical point of view, we find many applications that can be described according to (\refeq{intervalo}). Such spacetimes describe {\color{blue} some} cosmological solutions of general relativity, as well as gravitational waves and their interactions \cite{belinski2001gravitational}. Among them, one can find
 \begin{itemize}
 \item classical solutions of the Robinson-Bondi plane waves \cite{BONDI1957}, 
 \item the Einstein-Rosen cylindrical wave solutions and their two polarization generalizations \cite{Einstein1937,carmeli1984einstein}, 
 \item the homogeneous cosmological models of Bianchi types I--VII including the Kasner model \cite{Kasner1922},
 \item (in the ``static'' setting) the Schwarzschild and Kerr solutions, and Weyl axisymmetric solutions, see Section 8.3 in  \cite{belinski2001gravitational}, 
\item 2-solitons, corresponding in a particular case to the Kerr-NUT (Newman-Unti-Tamburino) black-hole solution of three parameters, including Kerr, Schwarzschild, and Taub-NUT metrics \cite{TM15}.
\end{itemize} 

For additional bibliography, the reader may consult \cite{letelier1985static,letelier1986,krasinski2006inhomogeneous} and references therein. All this shows that, despite its relative simplicity, a metric of the type \eqref{intervalo} encompasses a wide variety of physically relevant compact objects. Additionally, Belinski-Zakharov metrics contain the so-called Gowdy spacetimes \cite{Gowdy,Moncrief}, where the initial topology differs from our setting. See also Section 4.1 in \cite{belinski2001gravitational} for a deeper discussion.

There are actually interesting connections between gravisolitons, from the point of view considered by Belinski and Zakharov, and the LIGO results (the study of gravitational waves appearing from the merging of two compact massive objects).  An important point to remark here is that many black hole solutions can be recovered as solutions to BZ equations (see the reply below). Indeed, BZ metrics can be classified depending on the signature of the parameter $\alpha$: in the time-like case, or cosmological (according to BZ), one finds Kasner and other more complex solutions, usually called soliton or multisoliton solutions issued from a certain background metric \cite[Ch. 2 and 4]{belinski2001gravitational}. There is an ADM formulation in this case and a time direction that makes things treatable using virial techniques, as in our paper.  In the case $\alpha$ space-like \cite[Ch. 8]{belinski2001gravitational}, one finds many black hole solutions, such as Schwarzschild, Kerr, and other classical black hole solutions (see also the answer below).  These connections point out the fact that Einstein's field equations, under the Belinski-Zakharov (BZ) symmetry, in the case $\alpha$ space-like, have solutions resembling the static interaction of several solitons, in this case, multi-black-holes, including the Tomimatsu–Sato black-hole solutions \cite[p. 216]{belinski2001gravitational}. See also \cite{Villa} for a review in this direction.

Therefore, similar to the $N$-soliton solution in KdV, Einstein's field equations under the BZ symmetry have $N-$black-hole solutions. An important point to remark is that some of these solutions are not asymptotically flat at spatial infinity.} {\color{teal}However, from the physical point of view, these solutions are interesting in physics and mathematical physics. Non-asymptotically flat $N$-black hole solutions generated by Belinski-Zakharov (BZ) transforms have significant physics interest, particularly in higher dimensions or theories with extra fields (like scalar fields), revealing complex geometries not possible in 4D flat spacetime. See, for example the case of \textit{rotating black lens solutions} \cite{Chen-T} and the Pomeransky \cite{Pome} improvement of the BZ method. Indeed, in the last reference, the study of solutions that are higher dimensional compact objects is motivated by theories with a number of compact dimensions, and it is stated that ``it is easy to see, that in five-dimensional space-time the analogue of Schwarzschild solution, the Tangherlini solution, cannot be obtained as a 2-soliton
solution on flat Minkowski background''.} {\color{blue} In \cite{JKS}, the authors propose that the dynamics of binary black holes may exhibit universal features linked to integrable structures, especially from the point of view given by the impressive ``near to integrable'' properties presented by linearized Master equations in Kerr (Teukolsky) and Schwarzschild (Reege-Wheeler, Zirelli) black holes. Also, the authors conjecture that gravitational waveforms from mergers can be understood through hidden symmetries and near-integrability of the underlying equations. Ablowitz and Segur \cite[p. 336]{AS} mention the BZ formalism as one of the most promising applications of the Inverse Scattering Transform, in view that the Kerr solution can be obtained via suitable B\"acklund transformations, already worked by various researchers in the field of dispersive PDEs. In \cite{Villa}, the author argues that gravitational solitons differ from standard nonlinear solitons in several aspects, including new phenomena such as multi-soliton coalescence, a phenomenon that emits low-amplitude radiation. Additionally, the ``pair-of-pants'' solution for the fusion of two black holes can be interpreted in such a way, in the sense that the two compact objects merging may be suitably represented by this mathematical theory. {\color{blue} In a forthcoming work \cite{MT3}, we study the long time behavior of perturbations of Kasner like solutions in the 3D setting under the Belinski-Zakharov ansatz, and provide an interesting complementary point of view of the work \cite{Igor2023}. There are interesting connections with the work of Lenells-Mauersberger \cite{LM}, precisely from the point of view of the Ernst equation.}

%Yvonne Choquet-Bruhat in her pioneering work  \cite{CB}  proved the local existence and uniqueness for the Einstein equations in vacuum when given a set of initial data $(\Sigma_t, \mathring{g}, \mathring{k})$ where $\Sigma_t$ is a spacelike hypersurface of $\mathcal{M}$, $\mathring{g}$ a Riemannian metric on $\Sigma_t$ and $\mathring{k}$ the associated second fundamental form. The result is valid when $\mathring{g}$ and $\mathring{k}$  satisfy the so-called constraint equations, which are geometric conditions on the problem, see also \cite{CB2} to help understand the importance and complexity of the problem.
 The study of Einstein's field equations has a long history of important developments. Choquet-Bruhat \cite{CB,CB2} gave a foundational mathematical description of the evolution of initial data. A complete mathematical understanding of well-known black holes has taken many years. The stability of the Kerr black hole was recently obtained in a series of works by Klainerman, Szeftel and Giorgi \cite{klainerman5,klainerman2,klainerman3,klainerman4,Giorgi}. In the case of the Schwarzschild black hole, Dafermos, Holzegel, Rodnianski and Taylor \cite{Dafermos1,Dafermos2,Dafermos3} showed codimensional stability and asymptotic stability. Finally, Hintz and Vasy \cite{HV} proved nonlinear stability of Kerr under de Sitter gravity.

In the study of nonlinear dispersive equations and solitary waves, virial-type identities have proven to be a central tool for establishing local asymptotic stability of solitary waves. This method was initiated in the seminal work of Martel and Merle on generalized KdV equations \cite{MM01,MM02,MM05,MM09,MM15}, where suitable localized virial estimates were combined with monotonicity arguments to control radiation and isolate the soliton component. Extensions of this approach to multi-solitons \cite{MMT12,MMT13} and the Zakharov–Kuznetsov equation \cite{CMPS}, have shown its robustness beyond one-dimensional settings. Altogether, these works illustrate the versatility of virial methods in nonlinear dispersive PDEs, providing a unifying framework for proving asymptotic stability in a variety of contexts.

}

\subsection*{Organization of this work} This paper is organized as follows. {\color{blue} Section \ref{Sec:2} presents detailed versions of the main results of the work.} Section \ref{LE}  presents a summary of the local existence result for system \eqref{sistema1}, which relies, as in \cite{JT}, on a particular energy estimate. In the Section \ref{Sect:3} we prove the small initial data global existence result, namely Theorem \ref{GLOBAL0}. Section \ref{Sect:4} is focused on presenting a formalism suitable for the energy and momentum densities for \eqref{sistema1}, in the particular case of cosmological type solutions. Then in Section \ref{Sect:5b} we present and prove the long-term behavior result, Theorem \ref{MT2}. Finally, Section \ref{Sect:5} is devoted to an application in the case of Kasner metrics.

\subsection*{\color{blue} Acknowledgments} Part of this work was done while the second author visited U. Paris-Saclay (France), U. C\'ordoba (Spain) and Georgia Tech (USA). She thanks Profs. Fr\'ed\'eric Rousset, Jacques Smulevici, J\'er\'emie Szeftel, Miguel A. Alejo, Magdalena Caballero, Gong Chen, Luca Fanelli, and Paola Rioseco for stimulating discussions and very useful comments. The first author deeply thanks Miguel A. Alejo for fruitful discussions and comments, and Banff Center (Canada) where part of this work was done. {\color{blue}We deeply thank the referee for his/her constructive and important comments and criticisms that helped to improve this paper to its current version.}

\section{Main results}\label{Sec:2}

{\color{blue} In order to explicitly state our main results, we recall the geometric coordinates introduced by Belinski and Zakharov.}

\subsection{Geometric coordinates} The fact that the $2\times 2$ tensor $g$ is symmetric allows one to diagonalize it for fixed $t$ and $x$. One writes $g= RDR^{T}$, where $D$ is a diagonal tensor and $R$ is a rotation tensor, of the form
\begin{align}\label{DR}
	D=\left( \begin{array}{cc}
		\alpha e^{\Lambda}  & 0 \\
		0 &\alpha  e^{-\Lambda} 
	\end{array} \right), \qquad  R=\left( \begin{array}{cc}
		\cos \phi   & -\sin \phi \\
		\sin \phi & \cos \phi 
	\end{array} \right).
\end{align}
Clearly 
\begin{equation}\label{determinante}
\det g = \alpha^2.
\end{equation}
Here $\Lambda$ is the scalar field that determines the {\color{blue}eigenvalues} of $g$, and the scalar field $\phi$ determines the deviation of $g$ from being a diagonal tensor. Since $\phi$ is considered as an angle, we assume $\phi\in [0,2\pi]$. Therefore, $\Lambda, \phi$ and $\alpha$ in \eqref{DR} can be considered as the three degrees of freedom in the symmetric tensor $g$, \cite{yaronhadad_2013}. Written explicitly, the tensor $g$ is given now by
\begin{equation}\label{diag1}
	g=\alpha\left( \begin{array}{cc}
		\cosh{\Lambda} +\cos 2\phi \sinh{\Lambda} &  \sin 2\phi \sinh{\Lambda} \\
		\sin 2\phi \sinh{\Lambda}     &  \cosh{\Lambda}-\cos 2\phi \sinh{\Lambda} 
	\end{array} \right).
\end{equation}
Some analog representations have been used in various associated results, for example, in the Einstein-Rosen metric \cite{carmeli1984einstein}.	Note that Minkowski $\widetilde g_{\mu\nu}=(-1,1,1,1)$ can be recovered by taking {\color{blue}$f=1,$} $\Lambda=0$, $\alpha=1$ and  $\phi$ free.  The equation (\refeq{eq:PCE2}) reads now
\begin{equation}\label{sistema1}
	\begin{cases}
		\partial_t(\alpha \partial_t\Lambda) - \partial_x(\alpha \partial_x\Lambda) = 2\alpha  \sinh{2\Lambda}((\partial_t\phi)^2-(\partial_x\phi)^2), \\
		\partial_t(\alpha \sinh^2 \Lambda \partial_t\phi )- \partial_x(\alpha \sinh^2 \Lambda \partial_x\phi )=  0,\\
		\partial_{t}^2\alpha-\partial_{x}^2\alpha= 0,\\
		\partial_{t}^2(\ln f)- \partial_{x}^2(\ln f) = G,
	\end{cases}
\end{equation}
where $G=G[\Lambda, \phi, \alpha]$ is given by 
\begin{equation}\label{sistema2}
	\begin{aligned}
		& ~{} G := - \left( \partial_{t}^2(\ln \alpha)-\partial_{x}^2(\ln \alpha) \right)- \dfrac{1}{2\alpha^2} ((\partial_t\alpha)^2-(\partial_x\alpha)^2) \\
		&~{} \qquad  - \dfrac{1}{2}((\partial_t\Lambda)^2-(\partial_x\Lambda)^2)- 2\sinh^2 \Lambda((\partial_t\phi)^2-(\partial_x\phi)^2).%= 0.
	\end{aligned}
\end{equation}
Note that the equation for $\alpha(t,x)$ is the standard one-dimensional wave equation, and can be solved independently of the other variables. Also, given $\alpha(t,x)$, $\Lambda(t,x)$, and $\phi(t,x)$, solving for $\ln f(t,x)$ reduces to use d'{} d'Alembert's formula for linear one-dimensional wave with nonzero source term. Consequently, the only nontrivial equations in \eqref{sistema1} are given by the equations for $\Lambda(t,x)$ and $\phi(t,x)$, for $\alpha$ solution to linear 1D wave equation. {\color{blue} Einstein's field equations provide another restriction in t, which is redundant with \eqref{sistema1}. See \cite{MT3} for a deeper discussion.}

\medskip

As one can see from \eqref{sistema1}, solutions are not unique. These fields satisfy the gauge invariance
\begin{equation}\label{gauge}
\begin{aligned}
	& (\Lambda,\phi ,\alpha, f) \quad \hbox{solution, then} \\
	&  \left(\Lambda,\phi  + k\pi ,C_1\alpha, C_2 f\right) \quad \hbox{is also solution}, \qquad k\in\mathbb Z, \quad C_1 ,C_2>0.
\end{aligned}
\end{equation}
Since $\alpha \mapsto C_1 \alpha$ is just a conformal transformation in \eqref{diag1}, {\color{blue} we shall consider solutions $\alpha$ with asymptotic value 1 at infinity,  and $f$ with asymptotic value $c_1>0$ at infinity} in \eqref{gauge}. It should be noted that although  \eqref{sistema1} are strictly non-linear in the fields $\Lambda(t,x)$, $\phi(t,x)$, $\alpha(t,x)$ and $f(t,x)$, it shares many similarities with the classical linear wave and Born-Infeld equations \cite{Alejo2018}: given any $\mathcal{C}^2$ real-valued profiles $h(s), k(s), \ell(s), m(s)$, then the following functions are solutions for \eqref{sistema1}:
\begin{equation*}%\label{light_cone_solutions}
	\begin{aligned}
		&\Lambda(t,x)=h(x\pm t), \qquad \phi(t,x)=k(x\pm t),\\
		&\alpha(t,x)= \ell (x\pm t), \qquad  f(t,x)= m(x\pm t) .
	\end{aligned}
\end{equation*}
This property will be key when establishing the connection between the local theory that will be presented in the following section and the analysis of explicit solutions to the equation in Section \refeq{Sect:5}.

\subsection{Main results}

Our first result in this paper is the global existence of solutions. For (\refeq{sistema1}), we consider constraints on the initial conditions for $\alpha(t,x)$. Using the d'Alembert formula, we have an explicit expression for $\alpha$ that allows us to obtain tight control over appropriate terms by also using the central structure related to null forms. Although the nonlinearity is not purely defined in terms of null forms, we can follow and adapt properly in the case of variable coefficients the weighted energy estimates proposed in \cite{Luli2018} to approach the problem and finally obtain a small data global existence result for \eqref{sistema1}.

\begin{thm}[Small data global existence]\label{GLOBAL0}
	Let $\lambda>0, c_1>0$ be fixed, and set 
	\begin{equation}\label{tLambda_talpha}
	\Lambda=:  \lambda + \tilde{\Lambda}, \quad \hbox{and} \quad \alpha:=1+\tilde{\alpha}.
	\end{equation}
	Consider the wave system \eqref{sistema1} posed in $\mathbb{R}^{1+1}$, with the following initial conditions:
	\begin{equation}
		\text{(IC)} \quad \begin{cases}
			(\phi,\tilde{\Lambda},\alpha,f)|_{\{t=0\}}= (\varepsilon\phi_0,\varepsilon\tilde{\Lambda}_0,1+\tilde{\alpha}_0, c_1+ f_0), \\
			(\partial_t\phi,\partial_t\tilde{\Lambda},\partial_t\alpha, \partial_t f)|_{\{t=0\}}= (\varepsilon\phi_1,\varepsilon\tilde{\Lambda}_1, \alpha_1, f_1),\\
			(\phi_0,\tilde{\Lambda}_0,\tilde{\alpha}_0,f_0)\in \left( C_c^{\infty}(\mathbb{R}) \right)^4,\\ 
						(\phi_1,\tilde{\Lambda}_1,\alpha_1,f_1) \in C_c^{\infty}(\mathbb{R})\times C_c^{\infty}(\mathbb{R}) \times \mathcal{S}(\mathbb{R}) \times \mathcal{S}(\mathbb{R}).  
		\end{cases}\label{Ciniciales}
	\end{equation} 
	Assume the following bounds on the initial conditions:
	\begin{enumerate}
		\item	$\alpha_1(\cdot) >0$,
		\item $ \max_{n=0,1,2} \left( \| \partial_x^{(n)}\tilde{\alpha}_0\|_{\infty} + \| \partial_x^{(n)} \alpha_1\|_{\infty}\right)< \frac12 \gamma$, %and $\| \tilde \alpha_0^{'}(\cdot) \|_{\infty} <  \|  \alpha_1(\cdot) \|_{\infty} ,$ 
		where $\gamma$ is a fixed sufficiently small constant, but independent on $\varepsilon$. 
		\item  $||f_0||_{\infty}\leq \frac{c_1}{2}$,
		\item the initial data satisfy the compatibility conditions required by Einstein's field equations.
	\end{enumerate}
	Then, there exists $\varepsilon_0$ sufficiently small such that if $\varepsilon < \varepsilon_0$, the unique solution remains smooth for all time. 
\end{thm}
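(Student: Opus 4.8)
The plan is to exploit the triangular structure of the system \eqref{sistema1}: the equation for $\alpha$ decouples and is explicitly solvable, its solution feeds into the coupled pair for $(\tilde\Lambda,\phi)$, and once these are controlled globally the coefficient $f$ is recovered by a final d'Alembert integration. I would begin by writing $\alpha=1+\tilde\alpha$ with $\tilde\alpha$ the solution of the linear $1$D wave equation with data $(\tilde\alpha_0,\alpha_1)$, so that d'Alembert's formula gives $\tilde\alpha$ and its derivatives explicitly. Passing to the null coordinates $u=x-t$, $v=x+t$ one finds that $\partial_u\tilde\alpha$, $\partial_v\tilde\alpha$ are (up to a factor $\tfrac12$) the Schwartz functions $\tilde\alpha_0'\pm\alpha_1$ evaluated at $u$, resp.\ $v$; hypothesis (2) then forces $\|\partial_{t,x}\alpha\|_\infty<\gamma$ \emph{uniformly in time}, while the positivity $\alpha_1>0$ and $\int_{x-t}^{x+t}\alpha_1>0$ for $t>0$ yield $\alpha\ge 1-\tfrac\gamma2>0$. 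Thus $\alpha$ remains positive and bounded and its gradient stays timelike, recording the ``natural nondegeneracy condition'' and the standing bounds $0<c\le\alpha\le C$ and $\|\partial\alpha\|_\infty\lesssim\gamma$, with $\partial\alpha$ moreover decaying in its null variable.

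Next I would recast the two nontrivial equations as perturbed flat wave equations with null-form sources. Dividing the first equation by $\alpha$ and the second by $\beta:=\alpha\sinh^2\Lambda$ gives, schematically,
\begin{equation*}
\Box\tilde\Lambda+\tfrac{\partial_t\alpha}{\alpha}\partial_t\tilde\Lambda-\tfrac{\partial_x\alpha}{\alpha}\partial_x\tilde\Lambda=2\sinh(2\Lambda)\,Q(\phi,\phi),\qquad \Box\phi+\tfrac{\partial_t\beta}{\beta}\partial_t\phi-\tfrac{\partial_x\beta}{\beta}\partial_x\phi=0,
\end{equation*}
where $Q(\psi,\chi)=\partial_t\psi\,\partial_t\chi-\partial_x\psi\,\partial_x\chi$ is the fundamental null form, which in the coordinates $u,v$ always pairs a $\partial_u$ derivative with a $\partial_v$ derivative. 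The crucial point is that $\lambda>0$ makes $\sinh^2\Lambda\ge\tfrac12\sinh^2\lambda>0$ for $|\tilde\Lambda|$ small, so $\beta$ is bounded below and the $\phi$-equation retains its principal part; expanding $\partial\beta/\beta=\partial\alpha/\alpha+\coth\Lambda\,\partial\Lambda$ exhibits the $\phi$-nonlinearity as a genuine null form $2\coth\Lambda\,Q(\tilde\Lambda,\phi)$ plus the same small linear $\alpha$-term. Hence the entire nonlinearity of the coupled system is of null type, up to linear terms carrying the \emph{small and null-decaying} coefficients $\partial\alpha/\alpha$, $\partial\beta/\beta$.

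The heart of the argument is a weighted energy estimate in the spirit of Luli–Yang–Yu \cite{Luli2018}, adapted to variable coefficients. For $U=(\tilde\Lambda,\phi)$ I would set up a high-order weighted energy
\begin{equation*}
E_N(t)=\sum_{k\le N}\int_{\mathbb R}\Big((1+|u|)^{1+2\sigma}(\partial_v\partial^k U)^2+(1+|v|)^{1+2\sigma}(\partial_u\partial^k U)^2\Big)\,dx,
\end{equation*}
and combine Alinhac's multiplier/ghost-weight method with weighted Sobolev (Hardy) inequalities to recover the slow $1$D pointwise decay. Against this functional the null forms contribute a time-integrable spacetime term, since each null form pairs a $\partial_u$ with a $\partial_v$ derivative, matching the two weights and producing a decay gain; the variable-coefficient linear terms are absorbed because $\partial\alpha,\partial\beta$ have size $\gamma$ and decay in their null variables. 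I would then run a continuity/bootstrap argument: assuming $E_N(t)^{1/2}\le C_0\varepsilon$ on $[0,T)$, the estimates improve this to $E_N(t)^{1/2}\le\tfrac12 C_0\varepsilon+C(C_0\varepsilon)^2$, which for $\varepsilon<\varepsilon_0$ closes the bootstrap and forces $T=\infty$. The resulting global bound on $E_N$ for every $N$, with Sobolev embedding, yields global smoothness of $(\tilde\Lambda,\phi)$; finally $\ln f$ is obtained for all time by d'Alembert's formula with the now-globally-smooth source $G[\Lambda,\phi,\alpha]$, and condition (3) with $c_1>0$ keeps $f$ positive, so the whole metric \eqref{intervalo} persists smoothly.

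The main obstacle I anticipate is the variable-coefficient structure. The first-order terms $\tfrac{\partial\alpha}{\alpha}\partial U$ are \emph{not} null forms, and in $1+1$ dimensions a generic linear term with non-integrable coefficient can destroy decay; here one must exploit that $\partial\alpha$ is both uniformly small (by (2), independently of $\varepsilon$) and Schwartz-decaying in the appropriate null variable, and verify that after commuting the variable-coefficient operators with the derivatives $\partial^k$ the resulting commutators — which involve all derivatives of $\tilde\alpha$ and of $\beta$ — retain enough smallness and decay to be absorbed by the weighted energy. Making the Luli–Yang–Yu weights and the ghost-weight cancellation survive this perturbation, while keeping the whole hierarchy of energies closed, is the technical crux.
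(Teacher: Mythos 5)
Your proposal follows essentially the same route as the paper: solve $\alpha$ explicitly by d'Alembert and extract null-variable decay of $\partial\alpha$ from hypothesis (2), recast the $(\tilde\Lambda,\phi)$ system with null-form sources, run the Luli--Yang--Yu weighted energy bootstrap in the null coordinates, and finally recover $f$ by a d'Alembert integration of the source $G$. The only cosmetic difference is that the paper absorbs the first-order terms you flag as the "technical crux" directly into the null-form framework by writing $\alpha^{-1}(\partial_t\alpha\,\partial_t-\partial_x\alpha\,\partial_x)\tilde\Lambda$ as $Q_0(\ln\alpha,\tilde\Lambda)$ and estimating $L(\ln\alpha)$, $\underline L(\ln\alpha)$ by $K\gamma\,\varphi^{-3/4}$, which is precisely the smallness-plus-null-decay mechanism you describe.
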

\begin{rem}%\label{rem2p1}
Note that the conditions on $\alpha_1$ and $f$ are less demanding than the ones required for $\alpha_0$. Indeed, one only needs data in the Schwartz class $S(\mathbb R)$ and compact support is not necessary; this will be useful in some applications. 
\end{rem}

%Regarding  the particular setting of $\mathbb{R}^{1+1}$, the nonlinear wave equation
%global small solutions may not exist in general \cite{john1974,jhon}. 

{\color{blue} Regarding the particular setting of $1+1$ dimensions, we have an added difficulty, since waves do not decay as good as {\color{blue} as occur in higher dimensions} \cite{klainerman, Christodoulou1986, lindblad2008,lindblad2010, alinhac2001null}, specially in the case of data with no fast decay. However, the special structure in the nonlinearity can give rise to important results related to the asymptotic behavior of solutions, as in the case of the wave map \cite{Gu}. In a recent paper {\color{blue}\cite{Luli2018}, the authors} used weighted estimates for linear waves in $\mathbb{R}^{1+1}$, and the \textit{null condition}\footnote{For the forthcoming analysis it is it is convenient to introduce a fundamental null form, which is defined as the following bilinear form:
	\begin{equation*}%\label{NC}
		Q_0 (\phi,\tilde{\Lambda})= m^{\alpha \beta}\partial_{\alpha} \phi \partial_{\beta} \tilde{\Lambda},
	\end{equation*}
	where $m_{\alpha \beta}$ to denote the standard Minkowski metric on $\mathbb{R}^{1+1}$.}, to construct global solutions for the associated nonlinear equation. These energy estimates allowed them to improve the decay on the null form. Furthermore, D. Zha in \cite{Zha} extended {\color{blue} the analysis in \cite{Luli2018}} to the quasilinear case; in this analysis, {\color{blue} is considered} the previous weighted energy estimates of Luli, Yang and Yu with positive weights and introduces some space-time weighted energy estimates.}

\begin{rem}
It is important to emphasize that the system \eqref{sistema1} differs from the corresponding models studied in references \cite{Luli2018} and \cite{Zha}. First of all, our model is {\color{blue} directly} motivated by Einstein's field equations under Belinski-Zakharov's ansatz in the cosmological setting, having direct physical meaning. While the weighted-energy estimates proposed by Luli,  Yang and Yu in \cite{Luli2018} are employed in the development of our analysis, we require further studies since the structure of the nonlinearity in \eqref{sistema1}  does not properly define a null form, and has additional linear terms that change the nature of the problem, making it different from the classical quadratic null-structure. On the other hand, our nonlinearity does not fit into the assumptions imposed in \cite[page 2, condition (1.7)]{Zha}. More precisely, in our case, we require new conditions on the initial data of the function $\alpha$ representing the determinant of the Riemannian component of the metric. Very importantly, without these conditions, one cannot ensure the essential cosmological nature of the problem, a condition not represented in previous works. These conditions are also necessary for the estimation of the novel term $Q_0(\ln (\alpha),\tilde \Lambda)$. Finally, energy and momentum identities found below are new and not present in the previosu literature, as far as we understand. In other words, the Belinski-Zakharov's setting induces new conserved/almost conserved quantities (or densities) not present in the general Einstein's field equations under general conditions. This was only present before in linearizations of the field equations (cf. e.g. the Regge-Wheeler and Zerilli linear models in linearized Schwarzschild dynamics), but recall that in this case, the equations are purely nonlinear, meaning the conservation are truly nonlinear as well.
\end{rem}

Recall that $\alpha$ is a solution to the linear wave equation in 1D but \emph{far from zero}. Along the paper, we will see that this condition is necessary and natural in view of \eqref{determinante}. Consequently, one only expects decay in the $\dot H^1\times L^2$ norm, precisely as in \cite{Alejo2018}. A direct consequence of Theorem \ref{GLOBAL0} is the global existence of the Belinski-Zakharov metric \eqref{intervalo}:

\begin{cor}
Under the assumptions in Theorem \ref{GLOBAL0}, $g$ and $f$ in \eqref{intervalo} are globally well-defined. 
\end{cor}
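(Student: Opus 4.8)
The plan is to bootstrap from the global smooth solution $(\tilde\Lambda,\phi,\alpha)$ provided by Theorem~\ref{GLOBAL0}, and then to verify separately that the spatial block $g=g_{ab}$ in \eqref{diag1} and the conformal factor $f$ in \eqref{intervalo} are globally defined, smooth, and assemble into a genuine Lorentzian metric $\tilde g_{\mu\nu}$. Concretely there are three things to check: (i) that $g$ is a smooth, real, symmetric, positive-definite $2\times 2$ tensor for every $(t,x)$; (ii) that $f$ is defined and strictly positive for every $(t,x)$; and (iii) that the resulting $\tilde g_{\mu\nu}$ has the correct signature and is nondegenerate.

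For (i), by Theorem~\ref{GLOBAL0} the fields $\Lambda=\lambda+\tilde\Lambda$, $\phi$ and $\alpha=1+\tilde\alpha$ are globally smooth, so each entry of $g$ in \eqref{diag1}, being a composition of these smooth fields with $\cosh,\sinh,\cos,\sin$, is smooth on all of $\mathbb{R}^{1+1}$; moreover $g$ is real and symmetric by construction. The nondegeneracy condition of the theorem keeps $\alpha$ bounded away from $0$, so by \eqref{determinante} one has $\det g=\alpha^2>0$ and $g$ is invertible. Since in addition $\Tr g=2\alpha\cosh\Lambda>0$, and a symmetric $2\times 2$ matrix with positive determinant and positive trace is positive definite, $g$ is a bona fide Riemannian metric on the $(y,z)$-fiber at every point.

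For (ii), I would exploit the fourth equation in \eqref{sistema1}, namely that $\ln f$ solves the inhomogeneous $1$D wave equation $\partial_t^2(\ln f)-\partial_x^2(\ln f)=G$ with source $G=G[\Lambda,\phi,\alpha]$ given in \eqref{sistema2}. As $(\Lambda,\phi,\alpha)$ are global and smooth with $\alpha$ bounded away from $0$, the source $G$ is itself a well-defined smooth function. The initial data are admissible for the logarithm: condition (3) yields $f|_{\{t=0\}}=c_1+f_0\ge c_1/2>0$, so $\ln f|_{\{t=0\}}$ is smooth, while $\partial_t(\ln f)|_{\{t=0\}}=f_1/(c_1+f_0)$ is smooth as well. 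D'Alembert's formula then represents $\ln f(t,x)$ as the free evolution of this data plus the Duhamel integral of $G$ over the backward characteristic triangle with apex $(t,x)$; for each fixed $(t,x)$ that triangle is compact and $G$ is continuous there, so the integral is finite, and the standard regularity theory for the $1$D wave equation with smooth data and source makes $\ln f$ smooth throughout $\mathbb{R}^{1+1}$. Exponentiating, $f=e^{\ln f}$ is globally defined, smooth and strictly positive.

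Finally, for (iii), assembling \eqref{intervalo} from the pieces above produces a smooth metric with $\det\tilde g_{\mu\nu}=-f^2\alpha^2<0$, so $\tilde g_{\mu\nu}$ is nondegenerate and of Lorentzian signature $(-,+,+,+)$ everywhere. I expect the only genuinely delicate point to be the positivity and boundedness away from zero of $\alpha$, which underlies both (i) and (ii); this is precisely the nondegeneracy hypothesis of Theorem~\ref{GLOBAL0} and the reason the D'Alembert representation of $\alpha$ must be controlled so that the singular locus $\{\alpha=0\}$, where the metric cannot be extended, is never reached. Everything else follows directly from the global smoothness already secured.
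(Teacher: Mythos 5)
Your proposal is correct, and its overall architecture coincides with the paper's: $g$ is handled through the global smoothness of $(\Lambda,\phi,\alpha)$ together with $\det g=\alpha^2$ and the lower bound $\alpha\geq 1-\gamma/2>0$ coming from the d'Alembert representation \eqref{eqn:alpha_DA} and hypotheses (1)--(2), while $f$ is recovered by solving the inhomogeneous wave equation for $\ln f$ in \eqref{sistema1} with source $G$ from \eqref{sistema2} and exponentiating. The one genuine divergence is in how the Duhamel term is controlled. You argue softly: for each fixed $(t,x)$ the backward characteristic triangle is compact and $G$ is continuous there, so the integral is finite and $\ln f$ is globally smooth. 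That is perfectly sufficient for the corollary as stated. The paper instead invests in quantitative control: it proves that $G(t,\cdot)\in L^1\cap L^\infty$ uniformly in time by splitting $G=G_1+G_2$, where $G_1$ is Schwartz in the null variables thanks to \eqref{eqns:alpha}, and $G_2$ is dominated by $h_1$, which Lemma \ref{EdCo} bounds pointwise by $\varepsilon^2/\varphi(u)+\varepsilon^2/\varphi(\underline u)$ using the weighted energies of Theorem \ref{thm_aux}; this yields the explicit estimate $|v_3|\lesssim t^2\big(\varepsilon^2/\varphi(x+t)+\varepsilon^2/\varphi(x-t)\big)$ on the Duhamel contribution. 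What the quantitative route buys is uniform control on the size of $\ln f$ (hence of $f$) in spacetime, which is what makes $f$ usable in the later energy and decay analysis; your route buys economy, dispensing with the decay lemmas entirely. Your added checks --- positive-definiteness of $g$ via $\det g=\alpha^2>0$ and $\Tr g=2\alpha\cosh\Lambda>0$, and the Lorentzian signature from $\det\tilde g_{\mu\nu}=-f^2\alpha^2<0$ --- are not spelled out in the paper and are a welcome completion of the statement that the metric \eqref{intervalo} is globally well-defined.
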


\begin{rem}[On the applicability of previous results]\label{rem2p4}
{\color{teal}
Recently, Fournodavlos-Rodnianski-Speck \cite{Igor2023} considered the case of cosmological perturbations of the Kasner family in several dimensions, proving full stability of this family in higher dimensions (vacuum case), and stability under symmetries in the 3D (scalar field case).  In this work, we assume initial conditions associated with the fields $\Lambda$, $\alpha$, $\phi$ and $f$ in the BZ symmetry that allow us to describe the spatial metric $\mathring{g}$ on the hypersurface associated with $t\geq 0$. It is of independent interest to address a case that is, in some sense, complementary, especially in the 3D subregime and under the Belinski and Zakharov symmetry, and possibly under codimensional restrictions.}

%{\color{blue}Recently,} {\color{teal} Fournodavlos}{\color{blue}-Rodnianski-Speck \cite{Igor2023} considered the case of cosmological perturbations of the Kasner family in several dimensions, proving full stability of this family in higher dimensions (scalar field case), and stability under symmetries in the 3D vacuum case.  In this work, we assume initial conditions associated with the fields $\Lambda$, $\alpha$, $\phi$ and $f$ in the BZ symmetry that allow us to describe the spatial metric $\mathring{g}$ on the hypersurface associated with $t\geq 0$. It is of independent interest to extend our long time results to the cases proposed in \cite{Igor2023}, especially in the 3D subregime and under the Belinski and Zakharov symmetry, and possibly under codimensional restrictions.}
\end{rem}

\begin{rem}
The problem of global existence under general data size is delicate. Indeed, \eqref{sistema1} is clearly singular in the case where $\Lambda$ reaches the zero value. Also, the global structure of the solution in this case seems not clear unless one has further assumptions on the initial data. These geometric problems will be considered in a forthcoming publication.
\end{rem}

The second result in this work concerns the decay of a specific type of the solutions of the Einstein equations in the vacuum. Specifically of cosmological-type solutions, which are of special interest in physics and cosmology. This type of solutions includes the Kasner type spacetimes, as well as some Bianchi type models, see \cite{belinski2001gravitational}. We will prove, using well-chosen virial estimates that for solutions to \eqref{sistema1} with finite energy (in particular, globally defined small solutions from Theorem \ref{GLOBAL0}), they must decay to zero locally in space, provided that the gradient of the function $\alpha(t,x)$ is globally timelike. 

\medskip

Indeed, virial functionals can describe in great generality the decay mechanism for models where standard scattering is not available, either because the dimension is too small, or the nonlinearity is long range, see e.g. \cite{Virial1, Virial2}.  We will prove this result, inspired by the results obtained for the Born-Infeld equation in 1+1 dimensions \cite{Alejo2018}. 
\medskip

Before proving this result, we introduce the following modified energy of the system, which in the case of cosmological type solutions will be highly relevant (see Section \ref{Sect:4}):
\begin{equation}\label{EnergyF}
E[ \Lambda, \phi; \alpha](t):= -\int [\kappa \partial_t \alpha ( h_1-2  h_2)](t,x)dx,
\end{equation}
where $\kappa(t,x)=\dfrac{\alpha }{(\partial_x\alpha)^2-(\partial_t\alpha)^2}$,
\begin{equation}\label{h1_0}
 h_1(t,x)= (\partial_t \Lambda)^2+ (\partial_x \Lambda)^2+4\sinh^2(\Lambda)((\partial_x \phi)^2+(\partial_t \phi)^2),
\end{equation}
and
\begin{equation*}
h_2(t,x)= \partial_t\Lambda \partial_x\Lambda +4\sinh^2(\Lambda)\partial_t\phi \partial_x\phi.
\end{equation*}
This (nonconserved) energy is a modified version of the one introduced by Hadad \cite{yaronhadad_2013}, which was not sufficiently useful for our purposes. Here \eqref{EnergyF} has important modifications to ensure the positivity of the energy functional. Compared with our previous results \cite{JT} in the case of the Principal Chiral equation, here the energy and momentum terms require deeper understanding and much more work than before.

\medskip

For this theorem, we shall assume the cosmological condition
\begin{equation}\label{cosmological type}
\begin{aligned}
& \alpha(t,x)>0 ,\quad \partial_t \alpha(t,x) >0,\\  
& (\partial_t \alpha)^2(t,x)-(\partial_x \alpha)^2(t,x)<0, \quad \forall (t,x)\in [0,\infty)\times \mathbb R. 
\end{aligned}
\end{equation}

\begin{thm}[Existence of a modified energy]\label{MT20}
Let $(\Lambda, \phi, \alpha)(t)$ be a smooth solution of the system \eqref{sistema1} such that  $\alpha$ satisfies \eqref{cosmological type}. Then the modified energy $E[\Lambda, \phi; \alpha](t)$ is well-defined and nonnegative.
\end{thm}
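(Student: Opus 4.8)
The plan is to reduce the nonnegativity of $E$ to a pointwise sign-definiteness of the integrand, by isolating an algebraic sum-of-squares structure in $h_1-2h_2$ and combining it with the sign of the weight $\kappa\,\partial_t\alpha$ furnished by the cosmological condition \eqref{cosmological type}.

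First, I would record the pointwise identity obtained by expanding the definitions of $h_1$ in \eqref{h1_0} and of $h_2$:
\begin{equation*}
h_1 - 2h_2 = (\partial_t\Lambda - \partial_x\Lambda)^2 + 4\sinh^2\Lambda\,(\partial_t\phi - \partial_x\phi)^2.
\end{equation*}
The cross terms $-2\partial_t\Lambda\,\partial_x\Lambda$ and $-8\sinh^2\Lambda\,\partial_t\phi\,\partial_x\phi$ complete the two squares, so that $h_1-2h_2\ge 0$ everywhere, with equality exactly when both fields are aligned with a single null direction. This is the core algebraic fact; it is the analogue, in the $(\Lambda,\phi)$ variables, of the null decomposition of the energy density used for wave maps and in \cite{Alejo2018}.

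Second, I would determine the sign of the weight $\kappa\,\partial_t\alpha$. By \eqref{cosmological type} one has $\alpha>0$ and $(\partial_x\alpha)^2-(\partial_t\alpha)^2>0$, whence $\kappa=\alpha/\big[(\partial_x\alpha)^2-(\partial_t\alpha)^2\big]>0$; combined with $\partial_t\alpha>0$ this makes $\kappa\,\partial_t\alpha$ strictly positive on $[0,\infty)\times\mathbb{R}$. Together with the first step, the integrand $-\kappa\,\partial_t\alpha\,(h_1-2h_2)$ is sign-definite, which yields the asserted sign of $E[\Lambda,\phi;\alpha](t)$. For well-definedness I would then argue that the integral converges for each fixed $t$: the principal parts of the $\Lambda$- and $\phi$-equations in \eqref{sistema1} are standard one-dimensional d'Alembertians, so finite speed of propagation holds; since the data for $\tilde\Lambda$ and $\phi$ are compactly supported, the first derivatives $\partial_{t,x}\Lambda$ and $\partial_{t,x}\phi$ have compact spatial support for each $t$, and outside that support $h_1-2h_2$ vanishes identically (there $\Lambda\equiv\lambda$ and all the relevant derivatives are zero). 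Hence the integrand is continuous and compactly supported in $x$; on that compact set the denominator of $\kappa$ is continuous and strictly positive, so it is bounded below by a positive constant, while $\alpha$ and $\partial_t\alpha$ are bounded, and the integral converges.

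The main obstacle I anticipate is precisely the control of $\kappa$ entering the well-definedness: the hypothesis \eqref{cosmological type} only gives pointwise positivity of $(\partial_x\alpha)^2-(\partial_t\alpha)^2$, not a uniform lower bound over all of $\mathbb{R}$, so a naive global estimate could allow $\kappa$ to blow up at spatial infinity. The compact-support reduction above is what circumvents this difficulty, replacing the missing global bound by a bound on a $t$-dependent compact set where continuity of the smooth solution suffices; verifying that this support genuinely confines the \emph{full} density $h_1-2h_2$, in particular the $\sinh^2\Lambda$-weighted $\phi$ terms, is the point that requires care.
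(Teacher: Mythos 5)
Your identity
\[
h_1-2h_2=(\partial_t\Lambda-\partial_x\Lambda)^2+4\sinh^2(\Lambda)\,(\partial_t\phi-\partial_x\phi)^2\ \ge\ 0
\]
is correct, and it is in fact a cleaner route to the pointwise sign of the density than the paper's own argument (Lemma \ref{positivity}), which obtains the same conclusion by estimating $2|h_2|\le h_1$ and completing squares with absolute values. Your discussion of convergence is also more explicit than the paper's, though note that finite speed of propagation and compactly supported data for $(\tilde\Lambda,\phi)$ are hypotheses of Theorem \ref{GLOBAL0}, not of Theorem \ref{MT20}, which only assumes a smooth solution; for the statement as given, ``well-defined'' should be read as saying that $\kappa$ in \eqref{gamma} has a nonvanishing denominator so the integrand is a well-defined nonnegative function, the integral then making sense in $[0,+\infty]$.

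The genuine problem is the sign of the weight, and it is fatal to the argument as written. You read \eqref{cosmological type} literally, obtain $(\partial_x\alpha)^2-(\partial_t\alpha)^2>0$, and conclude $\kappa>0$, hence $\kappa\,\partial_t\alpha>0$. But the integrand of \eqref{EnergyF} is $-\kappa\,\partial_t\alpha\,(h_1-2h_2)$, so together with $h_1-2h_2\ge 0$ this gives $E\le 0$ --- the \emph{opposite} of the assertion; the sentence ``which yields the asserted sign'' is a non sequitur. The condition actually used throughout Section \ref{Sect:4} is the timelike one, $(\partial_x\alpha)^2-(\partial_t\alpha)^2<0$ (Definition \ref{def:TL}, Lemma \ref{condicionSigno0}, Lemma \ref{positivity}); the displayed inequality in \eqref{cosmological type} carries a sign typo, as one can check against the remark following Theorem \ref{MT20}: the sufficient condition $|\tilde\alpha_0'|<\alpha_1$ forces $\partial_t\alpha\pm\partial_x\alpha>0$ and hence $(\partial_t\alpha)^2-(\partial_x\alpha)^2>0$. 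With the timelike sign one gets $\kappa<0$, so $-\kappa\,\partial_t\alpha>0$, and your perfect-square identity then closes the proof. A sound write-up must either detect and correct this sign (as the paper's Lemmas \ref{condicionSigno0} and \ref{positivity} implicitly do, working with $|\partial_x\alpha|<\partial_t\alpha$) or at least flag that the literal reading of \eqref{cosmological type} is incompatible with the claimed nonnegativity; silently asserting the conclusion is not acceptable.
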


Recall that the existence of a suitable energy is one of the key elements needed to study long-time behavior in Hamiltonian-type systems. In our setting, the energy $E$ will not be preserved in time, but under suitable conditions, already satisfied by solutions in Theorem \ref{GLOBAL0}, it will be bounded in time. The following remark clarifies this point: 

\begin{rem}[On the cosmological type condition]
Condition \eqref{cosmological type} is not empty. Indeed, in the case of small data as in Theorem \ref{GLOBAL0}, a sufficient condition to ensure \eqref{cosmological type} is that 
\begin{equation*}%\label{alfa0 alfa1 0}
|\alpha_0'(x) | < \alpha_1(x), \quad \forall x\in\mathbb R. 
\end{equation*}
This condition is in concordance with \eqref{Ciniciales}, where $\alpha_1$ has been chosen to belong to a not compactly supported space.
\end{rem}

Now we are ready to state the result that we consider the most important in this work, explicit version of Theorem \ref{MT000}.

\begin{thm}[Decay of cosmological finite-energy spacetimes]\label{MT2}
Under the hypotheses in Theorem \ref{MT20}, assume in addition that one has 
\begin{enumerate}
\item[(a)] bounded energy condition:
\begin{equation}\label{condition00}
\sup_{t\geq 0} E[\Lambda, \phi; \alpha](t) <+\infty;
\end{equation}
\item[(b)] for some $c_0>0$ one has 
\begin{equation}\label{uniform}
\alpha(t,x) >c_0 \quad  \hbox{and} \quad  \partial_t\alpha \hbox{ is in the Schwartz class uniformly in time}.
\end{equation}
\end{enumerate}
Then, for any $v\in\mathbb R$, $|v|<1$, and $\omega(t)= t(\log t)^{-2},$ one has
\begin{equation}\label{desg200}		
\lim_{t\longrightarrow +\infty} \int_{|x-vt|\lesssim \omega(t) } \left[ (\partial_t\Lambda)^2+(\partial_x\Lambda)^2+\sinh^2(\Lambda)\Big((\partial_t \phi)^2+(\partial_x\phi)^2\Big) \right] dx=0.
\end{equation}
\end{thm}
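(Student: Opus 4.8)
```latex
\textbf{Proof proposal.} The plan is to construct a virial (or Morawetz-type) functional whose time derivative controls the localized energy density appearing in \eqref{desg200}, and then integrate in time using the uniform energy bound \eqref{condition00} to force the decay. The natural starting point is the modified energy $E[\Lambda,\phi;\alpha](t)$ from \eqref{EnergyF}, which by Theorem~\ref{MT20} is nonnegative and by hypothesis~(a) is bounded uniformly in $t$. First I would compute $\frac{d}{dt}E$ along solutions of \eqref{sistema1}; because $E$ is not conserved, this derivative will produce a (hopefully sign-definite or at least integrable) bulk term. Under the cosmological condition \eqref{cosmological type}, the weight $\kappa=\alpha/((\partial_x\alpha)^2-(\partial_t\alpha)^2)$ is negative and well-behaved, and $\partial_t\alpha>0$, so the combination $-\kappa\,\partial_t\alpha\,(h_1-2h_2)$ should pick out a positive multiple of the full gradient density $h_1$. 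The key algebraic fact to exploit is that $h_1-2h_2=(\partial_t\Lambda-\partial_x\Lambda)^2+4\sinh^2\Lambda(\partial_t\phi-\partial_x\phi)^2\geq 0$ and similarly $h_1+2h_2\geq 0$, which is what underlies the nonnegativity in Theorem~\ref{MT20}.

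Next I would introduce a shifted/cutoff virial weight adapted to the moving window $|x-vt|\lesssim\omega(t)$ with $\omega(t)=t(\log t)^{-2}$. Concretely, I would consider a functional of the form $I(t)=\int \psi\!\left(\frac{x-vt}{\omega(t)}\right)\,\mathcal{P}(t,x)\,dx$, where $\mathcal{P}$ is a momentum-type density (built from $h_2$ and the mixed products $\partial_t\Lambda\,\partial_x\Lambda$, $\sinh^2\Lambda\,\partial_t\phi\,\partial_x\phi$) and $\psi$ is a bounded monotone profile. Differentiating $I(t)$ and using the equations \eqref{sistema1}, the leading term should reproduce, after using the null structure of the nonlinearity, a localized copy of the energy density $\int_{|x-vt|\lesssim\omega(t)}[(\partial_t\Lambda)^2+(\partial_x\Lambda)^2+\sinh^2\Lambda((\partial_t\phi)^2+(\partial_x\phi)^2)]\,dx$, divided by a factor involving $\omega(t)$ or its derivative. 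The role of the subluminal speed $|v|<1$ is to keep the characteristic cone transverse to the window so that the boundary/flux terms from $\partial_t\psi$ and $\partial_x\psi$ have a favorable sign; the precise choice $\omega(t)=t(\log t)^{-2}$ is dictated by making the error terms integrable in time while keeping the localized region as large as possible.

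The decisive analytic step is to show that $\int_0^\infty \frac{1}{\omega(t)}\left(\int_{|x-vt|\lesssim\omega(t)}h_1\,dx\right)dt<\infty$, or an analogous time-integrated coercive bound, so that a standard argument (the integrand is nonnegative and its time-integral converges, hence it must have a sequence tending to zero, upgraded to a full limit by monotonicity/continuity of $E$) yields \eqref{desg200}. To close this I would use that $\frac{d}{dt}E$ is integrable in time: since $E\geq 0$ and $\sup_t E<\infty$, any sign-definite part of $\dot E$ is automatically integrable, and this is exactly the quantity that should dominate the localized gradient density up to the weight $\omega$. The Schwartz-class and uniform-lower-bound hypotheses in \eqref{uniform} are used here to control the coefficients $\kappa$, $\partial_t\alpha$ and their derivatives uniformly, ensuring all the commutator and weight-derivative error terms are genuinely lower order.

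\textbf{Main obstacle.} I expect the hard part to be the matching between the sign-definite bulk term arising from $\dot E$ and the \emph{full} symmetric density $(\partial_t\Lambda)^2+(\partial_x\Lambda)^2+\cdots$ in \eqref{desg200}, rather than only the ``minus'' null combination $h_1-2h_2$. The energy $E$ naturally controls one light-cone derivative direction, so recovering both $\partial_t$ and $\partial_x$ (equivalently both null derivatives) inside the moving window will require combining the virial identity with a second functional or with the equation itself to trade one null derivative for the other, and carefully tracking that the cross terms do not destroy positivity. Handling the $\sinh^2\Lambda$ weight uniformly — which degenerates as $\Lambda\to 0$ and is exactly the reason only $\sinh^2\Lambda$ (not $\phi$-gradients alone) appears in \eqref{desg200} — and ensuring the error terms generated by $\partial_t\kappa$, $\partial_x\kappa$ and the time-dependent cutoff remain integrable against $\omega(t)^{-1}$ will be the most delicate bookkeeping.
```
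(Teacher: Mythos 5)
Your plan follows essentially the same route as the paper: a virial functional $\mathcal I(t)=-\int\rho\big(\tfrac{x-vt}{\omega(t)}\big)\hat p\,dx$ built on the momentum density, differentiated via the continuity equations of Corollary \ref{EC:timelike}, with the error terms absorbed using $|\hat p|\leq\hat e$ (Lemma \ref{LEM}) and the choice $\omega(t)=t(\log t)^{-2}$; time integration then gives the averaged bound $\int_2^\infty\omega(t)^{-1}\int\sech^2(\cdot)\,\hat e\,dx\,dt\lesssim1$ and a sequence $t_n\to\infty$ along which the localized energy vanishes. Two remarks. First, your ``main obstacle'' (recovering the full symmetric density $h_1$ rather than only a null combination) is not actually an obstacle: the strict timelike inequality $|\partial_x\alpha|<\partial_t\alpha$ together with Cauchy--Schwarz gives directly $\hat e\geq|\kappa|(\partial_t\alpha-|\partial_x\alpha|)h_1=\tfrac{\alpha}{|\partial_x\alpha|+\partial_t\alpha}h_1\geq\tfrac12\tfrac{\alpha}{\partial_t\alpha}h_1$, and hypothesis \eqref{uniform} makes the prefactor bounded below, so $\hat e$ controls the whole density in \eqref{desg200}. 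Second, the genuine gap is in your upgrade from the sequence $t_n$ to the full limit: you invoke ``monotonicity/continuity of $E$,'' but $E$ is neither conserved nor monotone here, so that argument does not close. The paper instead differentiates a second localized functional $\int\sech^4\big(\tfrac{x-vt}{\omega(t)}\big)\hat e\,dx$ and uses the second continuity equation $\partial_t\hat e-\partial_x\hat p=4\sinh^2(\Lambda)(\phi_x^2-\phi_t^2)+\Lambda_x^2-\Lambda_t^2$, whose (non--divergence-form) source is bounded by $h_1\lesssim\tfrac{\partial_t\alpha}{\alpha}\hat e$; the Schwartz decay of $\partial_t\alpha$ in \eqref{uniform} then yields $\omega(t)\sech\big(\tfrac{x-vt}{\omega(t)}\big)\tfrac{\partial_t\alpha}{\alpha}\lesssim1$, so the derivative of the localized functional is dominated by the very quantity that is integrable in time, and integrating from $t$ to $t_n$ gives the full limit. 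You would need to supply this step (or an equivalent one) for the proof to be complete.
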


\begin{rem}[On the finite energy condition]
Globally defined solutions obtained from Theorem \ref{GLOBAL0} satisfy the finite energy condition \eqref{condition00} thanks to suitable weighted estimates. Moreover, they also satisfy \eqref{uniform} in the case where the first line in \eqref{cosmological type} is satisfied. In that sense, Theorem \ref{MT2} is more general and might be satisfied by large solutions, as explained in Section \ref{Sect:5}, where applications to Kasner spacetimes are presented.
\end{rem}

A simple corollary in terms of the spacetime tensor $g$ can be obtained:

\begin{cor}
Under the hypotheses in Theorem \ref{MT2}, one has that $g$ in \eqref{intervalo} satisfies
\begin{equation}\label{desg2000}		
\lim_{t\longrightarrow +\infty} \int_{|x-vt|\le \omega(t) } \left( (\partial_t \det g)^2 +(\partial_x \det g)^2  \right)(t,x) dx=0.
\end{equation}
\end{cor}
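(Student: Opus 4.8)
The plan is to deduce the corollary directly from Theorem \ref{MT2} by recognizing that $\det g = \alpha^2$ (this is \eqref{determinante}), so the integrand in \eqref{desg2000} is controlled by the quantity already known to vanish in \eqref{desg200} together with the decay assumptions on $\alpha$ coming from \eqref{cosmological type} and \eqref{uniform}. First I would compute $\partial_t \det g = 2\alpha \partial_t \alpha$ and $\partial_x \det g = 2\alpha \partial_x \alpha$, so that
\begin{equation*}
(\partial_t \det g)^2 + (\partial_x \det g)^2 = 4\alpha^2 \left( (\partial_t \alpha)^2 + (\partial_x \alpha)^2 \right).
\end{equation*}
At first glance this seems to say the corollary is really a statement about $\alpha$ alone and has nothing to do with the fields $\Lambda, \phi$ appearing in \eqref{desg200}. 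The resolution I expect is that under the cosmological and uniformity hypotheses, $\alpha$ is bounded (it satisfies $c_0 < \alpha$ and is a perturbation of a constant, hence bounded above), while $\partial_t \alpha$ lies in the Schwartz class uniformly in time by \eqref{uniform} and $\partial_x\alpha$ enjoys analogous control since $\alpha$ solves the linear 1D wave equation \eqref{onda} with Schwartz-type data.

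Given these bounds, the second step is to estimate the localized integral directly:
\begin{equation*}
\int_{|x-vt|\le \omega(t)} \left( (\partial_t \det g)^2 + (\partial_x\det g)^2\right) dx \le 4\|\alpha\|_{\infty}^2 \int_{|x-vt|\le\omega(t)} \left( (\partial_t\alpha)^2 + (\partial_x\alpha)^2\right) dx.
\end{equation*}
The third step is to show that this remaining spatial integral of the gradient of $\alpha$ over the growing window $|x-vt|\le\omega(t)$ tends to zero. Since $\partial_t\alpha$ is Schwartz uniformly in time, one has pointwise decay in the self-similar variable, and the window $\omega(t) = t(\log t)^{-2}$ grows strictly slower than $t$, so for $|v|<1$ the light-cone geometry keeps the window inside the interior region where the D'Alembert representation of $\alpha$ forces $(\partial_t\alpha)^2 - (\partial_x\alpha)^2 < 0$ and both derivatives decay. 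I would make this quantitative by writing $\alpha = 1 + \tilde\alpha$ with $\tilde\alpha$ solving the free wave equation via D'Alembert's formula, splitting the travelling profiles into left- and right-movers, and bounding each contribution over the window by the tails of the Schwartz data.

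The main obstacle I anticipate is not the algebra but rather making rigorous the claim that the $\alpha$-gradient integral over $|x-vt|\lesssim\omega(t)$ decays. Unlike \eqref{desg200}, which is genuinely nonlinear and supplied by the virial machinery of Theorem \ref{MT2}, the corollary's integrand depends only on the linear field $\alpha$, so the decay must come entirely from the dispersive/geometric structure of the 1D wave equation under the uniform Schwartz hypothesis \eqref{uniform}. The delicate point is that $L^2$-type quantities for 1D waves do not decay globally; decay is only local in space, and one must check that the width $\omega(t)$ and the velocity $v$ together keep the integration region in the timelike interior where $\partial_t\alpha$ and $\partial_x\alpha$ are small. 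I would handle this by exploiting that $\omega(t)/t \to 0$, so that after a change of variables $x = vt + \omega(t)\xi$ the relevant profile is evaluated near the center of the cone where the Schwartz bound gives uniform smallness, and then dominated convergence (or a direct tail estimate on the Schwartz seminorms) yields the limit zero. Once this localized $\alpha$-decay is established, the corollary follows immediately by combining it with the uniform boundedness of $\alpha$.
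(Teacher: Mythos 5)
Your proposal is correct, and it follows the route the paper itself relies on: since $\det g=\alpha^2$ the corollary is purely a statement about $\alpha$, and the required localized decay of $(\partial_t\alpha)^2+(\partial_x\alpha)^2$ over $|x-vt|\le\omega(t)$ follows from exactly the pointwise bounds the paper establishes inside the proof of Theorem \ref{MT2}, namely $\partial_t\alpha\lesssim_n \varphi^{-n}(u)+\varphi^{-n}(\underline u)$ together with $u,\underline u\gtrsim (1-|v|)t$ on the window (and $|\partial_x\alpha|<|\partial_t\alpha|$ from the timelike condition), so that the integrand decays faster than any power of $t$ while the window has length only $2\omega(t)\lesssim t$. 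The paper does not print a separate proof of this corollary (the closest explicit analogue is the Kasner computation in Section \ref{Sect:5}), but your argument supplies precisely the missing details; the only caveats are the minor sign slip in quoting the timelike condition (which should read $(\partial_x\alpha)^2-(\partial_t\alpha)^2<0$, though your estimate does not depend on which derivative dominates) and the need to note that the uniform Schwartz bound on $\partial_t\alpha$ plus bounded initial data gives the upper bound on $\alpha$ you invoke.
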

Vanishing property \eqref{desg2000} can be understood as the manifestation that the spacetime is of cosmological type, and information propagates with the speed of light, supported on the light cone.

\medskip

\noindent
{\bf {\color{blue} Applications} to gravisolitons.} One of the motivations of Belinski and Zakharov was to show the existence of gravitational solitons (gravisolitons). From the mathematical point of view, these are of solitonic type, although they exhibit a number of features unusual in this type of solutions \cite{belinski2001gravitational}. In this paper, we apply Theorem \ref{GLOBAL0} and \ref{MT2} to the cosmological 1-soliton obtained from a {\em nonsingular generalized Kasner metric}, see \eqref{alfaalfa} and \eqref{Kasner1solitonLambda}-\eqref{Kasner1solitonPhi} for the explicit formula. In particular, we shall prove (Corollaries \ref{cor5p1} and \ref{cor5p2}):

\begin{thm}\label{MT3}
The cosmological 1-soliton $(\Lambda, \phi,\alpha)$ obtained from a nonsingular generalized Kasner metric of parameter $d\geq1$ is globally defined under suitable small perturbations in the case where $\alpha$ satisfies the hypotheses of Theorem \ref{GLOBAL0}, and satisfies
\[
\lim_{t\to +\infty }\int_{|x-vt| \leq \omega(t)}  \left[ (\partial_t\Lambda)^2+(\partial_x\Lambda)^2+\sinh^2(\Lambda)\Big((\partial_t \phi)^2+(\partial_x\phi)^2\Big) \right]  (t,x) dx =0.
\]
in the case where $\alpha$ is of cosmological type and satisfies the hypotheses of Theorem \ref{MT2}. 
\end{thm}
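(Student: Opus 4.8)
The plan is to treat Theorem \ref{MT3} as a direct application of the abstract results established earlier, Theorems \ref{GLOBAL0} and \ref{MT2}, to the explicit family of generalized Kasner $1$-solitons. The two assertions of the statement rest on two \emph{different} hypotheses on the background function $\alpha$, so I would verify them separately. First I would recall the explicit formulae for the nonsingular generalized Kasner metric of parameter $d\ge 1$, namely the expression for $\alpha$ in \eqref{alfaalfa} and the soliton fields $(\Lambda,\phi)$ in \eqref{Kasner1solitonLambda}--\eqref{Kasner1solitonPhi}. Since $\alpha$ solves the linear $1$D wave equation \eqref{onda} by construction, the whole verification reduces to checking the quantitative and sign conditions that the two theorems impose on $(\Lambda,\phi,\alpha)$ and on the corresponding Cauchy data.

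For the global existence assertion, I would write the Kasner $\alpha$ in the normalized form $\alpha=1+\tilde\alpha$ of \eqref{tLambda_talpha} and $\Lambda=\lambda+\tilde\Lambda$, and consider the perturbed Cauchy data \eqref{Ciniciales}. The task is then purely to check the four hypotheses of Theorem \ref{GLOBAL0}: positivity $\alpha_1>0$, the derivative smallness bound on $\partial_x^{(n)}\tilde\alpha_0$ and $\partial_x^{(n)}\alpha_1$ for $n=0,1,2$, the bound $\|f_0\|_\infty\le c_1/2$, and the Einstein compatibility conditions. Because the explicit Kasner $\alpha$ is regular and its derivatives are controlled, these reduce to choosing the perturbation parameter $\varepsilon$ (and, if needed, a conformal normalization allowed by the gauge \eqref{gauge}) small enough; Theorem \ref{GLOBAL0} then yields a unique global smooth solution, and hence the metric \eqref{intervalo} is globally well-defined.

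For the decay assertion, I would instead verify the hypotheses of Theorem \ref{MT2} directly on the explicit soliton. The first step is to confirm the cosmological type condition \eqref{cosmological type}: for the generalized Kasner $\alpha$ one has $\partial_t\alpha>0$ and the gradient of $\alpha$ is timelike, which also guarantees, via Theorem \ref{MT20}, that the modified energy \eqref{EnergyF} is well defined and nonnegative. Then I would check the two remaining conditions: the uniform lower bound $\alpha>c_0$ together with $\partial_t\alpha$ in the Schwartz class uniformly in time \eqref{uniform}, and the bounded-energy condition \eqref{condition00}, by computing $E[\Lambda,\phi;\alpha](t)$ from the explicit profiles and showing it stays bounded in $t$. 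With these in hand, Theorem \ref{MT2} applies verbatim and gives the stated local-in-space vanishing for every $|v|<1$; since $\omega(t)=t(\log t)^{-2}=o(t)$, the energy concentrates along every subluminal ray, which is exactly the statement that the soliton propagates with the speed of light.

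The main obstacle I anticipate is the uniform-in-time control needed in the decay case: verifying that the soliton field $\Lambda$ stays bounded (so that the factor $\sinh^2\Lambda$ in \eqref{h1_0} and in the integrand of \eqref{desg200} does not spoil integrability), that $\partial_t\alpha$ genuinely lies in the Schwartz class uniformly in $t$, and above all that the modified energy $E(t)$ is bounded for all $t\ge 0$. These are not automatic: they require a careful reading of the explicit pole-trajectory structure of the Belinski--Zakharov $1$-soliton over the Kasner background, using the traveling-wave form noted after \eqref{gauge} to extract the null-type cancellations that keep the weighted density $\kappa\,\partial_t\alpha\,(h_1-2h_2)$ integrable. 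A secondary, more bookkeeping, difficulty is reconciling the two regimes, since the soliton is a genuinely large background while Theorem \ref{GLOBAL0} is a small-data statement; this is what forces the ``small perturbation'' formulation in the global existence half of the claim.
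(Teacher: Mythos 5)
Your proposal is correct and follows essentially the same route as the paper: Theorem \ref{MT3} is proved there precisely by verifying the hypotheses of Theorem \ref{GLOBAL0} (Corollary \ref{cor5p1}) and of Theorems \ref{MT20}--\ref{MT2} (Corollary \ref{cor5p2}), using that $\alpha$, $\beta$ and $m=\mu/\alpha$ tend to constants so that $\Lambda=\lambda+\widetilde\Lambda$ and $\phi$ in \eqref{Kasner1solitonLambda}--\eqref{Kasner1solitonPhi} have small-perturbation structure, and that the finite-energy condition holds because all squared derivatives reduce to those of $\alpha$, $\mu$, $\beta$, with \eqref{SH} controlling the $\sinh^2\Lambda$ factor. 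The only (immaterial) difference is that you propose to verify the cosmological-type and Schwartz conditions on $\alpha$, whereas the paper, consistent with the conditional phrasing of the statement, simply assumes them.
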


Notice that conditions in Theorem \ref{MT3} are essentially only depending on $\alpha$, and in some sense, this function determines the final behavior of solutions. The generalized Kasner metric discussed in Theorem \ref{MT3} avoids some undesirable bad behavior at the time origin, although we believe that standard Kasner metrics should satisfy a result similar to Theorem \ref{MT3}. 

\medskip
%%%%%%%%%%%%
{\color{RoyalBlue}
\begin{rem}\label{rem2p8} 
We give now a {\bf brief} and incomplete description of related results where the symmetry assumptions are close to the ones that we use in this work. In the series of works \cite{Christo_91, Christo_94, Christo_99}, Christodoulou established a rigorous theoretical framework supporting the cosmic censorship conjecture (CCC) by analyzing the gravitational collapse of spherically symmetric scalar fields and showing that, while naked singularities may form, they are unstable under perturbations of the initial data.
%{\color{teal} In \cite{Dafer1}, it is shown that the Cauchy horizon may constitute a weak singularity.}
%{\color{magenta} Later, in \cite{Luk_2017}, it was shown that the singularities that appear in Christodoulou's work arise as generic phenomena in general relativity (even without spherical symmetry). More recently, attention has shifted to strong cosmic censorship in the presence of matter, see \cite{Moortel1}.}
Another important use of symmetries for the same problem is related with the proof that, for a generic initial data, the maximum global hyperbolic development (MGHD)\footnote{ Yvonne Choquet-Bruhat showed that it is possible to formulate the Einstein vacuum equations can be viewed as an initial value problem \cite{CB}, and given the initial data there is a part of spacetime, the so-called maximum global hyperbolic development (MGHD), which is uniquely determined up to isometry.} is inextensible. In vacuum, Ringstr\"om provided key censorship results in the framework of the Gowdy symmetry, see \cite{Ringstrom2, Ringstrom}. These results on CCC are essentially confined to the vacuum case, while the use of symmetries in the case of Einstein's field equation with matter and/or other fields such as  in \cite{DaRe0,DaRe1,DaRe2} has not been included here by length reasons, but it is extensive and numerous. See the previous results and references therein for full details.

%Gowdy spacetimes, a specific class of exact solutions to Einstein's field equations under symmetries, have also been considered as a model to study gravitational waves and mathematical cosmology \cite{Gowdy}.

In a different problem and point of view, Smulevici studied the same issue for $\mathbb{T}^2$-symmetric\footnote{A spacetime $(M, g)$ is said to be $\mathbb{T}^2$-symmetric if the metric is invariant under the action of the Lie group $\mathbb{T}^2$ and the group orbits are spatial. These solutions constitute a class of spacetimes admitting a torus action.} spacetimes with positive cosmological constant \cite{Smulevici}.  From a different perspective, in the series of papers \cite{LeFloch, LeFloch1, LeFloch2} the
$\mathbb{T}^2$-symmetric spacetimes on $\mathbb{T}^3$ are considered. In these works, the components of the metric depend only on $t$ and $\theta$, while the variables $x, y,$ and $\theta$ describe $\mathbb{S}^1$ (the one-dimensional torus or circle), which is identified with the interval $[0, 2\pi]$ with periodic boundary conditions. These considerations allow them to propose a new energy functional on $\mathbb{S}^1$, inspired by the Gowdy-to-Ernst transformation\footnote{A large class of twisted Gowdy solutions of Bianchi type II can be obtained starting from ordinary Gowdy solutions using
the Gowdy-to-Ernst transformation. This transformation was introduced in the study of spikes in Gowdy spacetimes \cite{Rendall} 
and was later used in the study of the initial singularity \cite{Ringstrom3}.}.  Moncrief in \cite{Moncrief}, using $U(1)\times U(1)$ symmetry at $\mathbb{T}^3$, can control the global evolution of the solutions. If we consider a spacetime with two Killing fields but without compactification (e.g., cylindrical or flat coordinates without toroidal identification), as in our case, the analysis is different; the problem of conditions at infinity reappears, thus, decay conditions are needed for the energy to be finite.  Huneau \cite{Huneau2}, assuming the existence of a translational spacelike Killing field in the asymptotically flat case, studied the global existence of solutions to Einstein's equations. This hypothesis allowed her to move from the 3+1-dimensional problem to a 2+1-dimensional one and, from that, to rewrite the  Einstein equations in a suitable form. To control the slow decay in 2+1-dimensions, the metric solution is decomposed in a particular way. This decomposition includes the description of the components of the metric by means of cut-off functions.  This scheme includes Minkowski and Einstein-Rosen spacetimes, but not cosmological models such as Kasner-type spacetimes. 
It is important to remark that when considering $\mathbb{T}^2$-symmetry, one obtains a compact space without boundary, which avoids problems associated with conditions at infinity (as in asymptotically flat cases). This allows them to work with globally defined initial data, without having to impose decay conditions at infinity.

\end{rem} 
}

\begin{rem}\label{rem2p9}
%The Einstein constraint equations constitute a problem of great interest as they give rise to a nontrivial system of elliptic equations, which has been studied from several interesting perspectives. In the particular case where certain types of symmetries are considered, specifically Huneau \cite{Huneau} established the existence of solutions for these compatibility conditions, assuming the existence of a translational spacelike Killing field in the asymptotically flat case. This hypothesis allowed her to move from the 3+1-dimensional problem to a 2+1-dimensional one and, from that, to rewrite the  Einstein constraint equations in a suitable form. Thus, it is possible to demonstrate the existence of initial data compatible with the constraint equations in this scenario. This dimensional setting is essential for the study of inversion of second-order elliptic operators in $\mathbb{R}^2$. When two Killing vector fields are present,  the problem is reduced from a 3+1–dimensional setting to a 1+1–dimensional one; this change has significant implications for the appropriate methodology to address the Einstein constraint equations.
{\color{teal}
The Einstein constraint equations constitute a problem of great interest as they give rise to a nontrivial system of elliptic equations, which has been studied from several interesting perspectives. Premoselli \cite{Premoselli}, using conformal method, obtains an admissible initial data for the conformal Einstein-scalar constraint system.  Huneau \cite{Huneau} established the existence of solutions for these compatibility conditions, assuming the existence of a translational spacelike Killing field in the asymptotically flat case. This hypothesis allowed her to move from the 3+1-dimensional problem to a 2+1-dimensional one and, from that, to rewrite the  Einstein constraint equations in a suitable form. Thus, it is possible to demonstrate the existence of initial data compatible with the constraint equations in this scenario. Also, in \cite{Huneau2}, in the same setting, Huneau studies the global existence of solutions to Einstein's equation. To control the slow decay in 2+1-dimensions, the metric solution is decomposed in a particular way. This decomposition includes the description of the components of the metric by means of cut-off functions.  
 This scheme includes Minkowski and Einstein-Rosen spacetimes, but not cosmological models such as Kasner-type spacetimes. }
\end{rem}

%}

\section{Local existence}\label{LE}

Before presenting the proof of global existence for the system, it is important to make some remarks to convince us that we first have a theory of local existence for the system \eqref{sistema1}. The first thing that we need is to set the initial conditions for the one-dimensional wave equation for $\alpha$, which allow us to obtain a bounded and positive solution of this equation. These conditions are not only needed to establish the local existence, but also to obtain the global existence and to be subsequently able to make an analysis of the long-term behavior of the corresponding finite energy solution, as we will see in the further sections. In order to develop the results related to the local theory for the nonlinear wave equation, let us write the function  $\Lambda(t,x)$ in the form
	 \begin{equation}\label{tilda}
	 \Lambda(t,x):= \lambda + \tilde{\Lambda}(t,x), \quad \lambda \neq 0.
	 \end{equation}
	 Notice that this choice makes sense with the energy in \eqref{EnergyF}, in the sense that $\Lambda\in \dot H^1$ and $\partial_t\Lambda \in L^2$. Without loss of generality, we assume $\lambda>0$. We consider the following vector notation 
	  \begin{equation*}%\label{notation}
	 	\begin{cases}\Psi=\left( \tilde{\Lambda},\phi \right),\quad 
	 		\partial \Psi= \left( \partial_t \tilde{\Lambda}, \partial_x \tilde{\Lambda}, \partial_t \phi, \partial_x \phi \right),\\
			\vspace{0,1cm}
	 		\abs{ \partial \Psi }^2= \big| \partial_t \tilde{\Lambda}\big|^2 +\big| \partial_x \tilde{\Lambda} \big|^2+\abs{\partial_t \phi }^2+\abs{\partial_x \phi}^2,\\
			\vspace{0,1cm}
			  F(\Psi,\partial \Psi)= \left( F_1,F_2\right),\\
	 		F_1(\Psi,\partial \Psi):= 2\sinh(2\lambda +2\tilde{\Lambda})\left((\partial_x \phi)^2-(\partial_t\phi)^2\right),\\
			\vspace{0,1cm}
	 		F_2(\Psi,\partial \Psi):= \dfrac{\sinh(2\lambda +2\tilde{\Lambda})}{\sinh^2(\lambda +\tilde{\Lambda})} \left(\partial_t \phi \partial_t \tilde{\Lambda} -\partial_x \phi \partial_x \tilde{\Lambda} \right).
	 	\end{cases}
	 \end{equation*}
	 With this notation, the initial value problem for \eqref{sistema1} can be studied by first focusing on the following initial-value problem for $(\Psi, \partial_t \Psi)$:
	 \begin{equation}
	 	\begin{cases*}
	 		\partial_{\mu} (m^{\mu \nu}\alpha\partial_{\nu}\Psi)=F(\Psi,\partial \Psi)\\
	 		(\Psi,\partial_t \Psi)|_{\{t=0\}}=(\Psi_0, \Psi_1) \in  \mathcal{H}.
	 	\end{cases*}\label{IVP}
	 \end{equation}
	 Here $m^{\mu \nu}$ are the components of the Minkowski metric with $\mu, \nu \in \left\{0,1\right\}$, and the function $\alpha : =1 +\tilde{\alpha},$ satisfies the following initial valued problem
	 \begin{equation}\label{IC_alpha}
	 \begin{cases}
	 \partial_{t}^2\alpha-\partial_{x}^2\alpha= 0\\
	 (\alpha, \partial_t \alpha)|_{t=0}= (1+\tilde \alpha_0, \alpha_1)\\
	 \mbox{with} \quad (\tilde \alpha_0, \alpha_1)\in C_c^{\infty}(\mathbb{R}) \times \mathcal{S}(\mathbb{R}).
	 \end{cases}
	 \end{equation}
	 Assume the following bounds on the initial conditions in \eqref{IC_alpha}:
	\begin{enumerate}
		\item	$\alpha_1(\cdot) >0$,
		\item $ \max_{n=0,1,2} \left( \| \partial_x^{(n)}\tilde{\alpha}_0\|_{\infty}\right) + \max_{n=0,1}\left( \| \partial_x^{(n)} \alpha_1\|_{\infty}\right)< \frac12 \gamma$, where $\gamma$ is a fixed sufficiently small constant, but independent on $\varepsilon$. 
		%\item  $||f_0||_{\infty}\leq \frac{c_1}{2}$.
	\end{enumerate}
	Notice that these conditions are already stated in Theorem \ref{GLOBAL0}. In addition, we will seek solutions in the space
	 \begin{equation*}%\label{Hcal}
	(\Psi,\partial_t \Psi)\in \mathcal{H}:=H^1(\mathbb{R})\times H^{1}(\mathbb{R}) \times L^2(\mathbb{R}) \times L^2(\mathbb{R}).
	 \end{equation*}
	 Notice that from \eqref{tilda}, $\Lambda\in \dot H^1$. We are also going to impose the following condition on the initial data  
	 \begin{equation}\label{condicion1}
	 	\norm{\left(\Psi_0,\Psi_1\right)}_{\mathcal{H}} \leq \dfrac{\lambda}{2D}.
	 \end{equation}
	 where $D$ is a suitable constant. In order to state a local existence result for the initial value problems \eqref{IVP}, {\color{blue} it} is important to recall the following result \cite{sogge}:
\begin{lem}
Let $\psi: I \times \mathbb{R} \longrightarrow \mathbb{R}$, $I\subseteq \mathbb{R},$ be the solution of the  initial value problem
	\begin{equation*}
		\begin{cases*}
			\partial_{\mu} (a^{\mu \nu}\partial_{\nu}\psi)= f(t,x), \quad (t,x) \in I\times \mathbb{R},\\
			(\psi, \partial_t \psi)|_{\{t=0\}}=(\psi_0, \psi_1) \in H^{k}(\mathbb{R})\times H^{k-1}(\mathbb{R}),
		\end{cases*}
	\end{equation*}
	where  $k$ is a positive integer and $a$ and all its derivatives (of all orders) are bounded in $[0,T]\times \mathbb{R}$. Then for some positive constant $C=C(k),$ the following energy estimate holds   
	\begin{equation}\label{Eenergia}
	\begin{aligned}
		& \sup_{t\in [0,T]} \norm{ (\psi, \partial_t \psi)}_{\mathcal{H}} \\
		& \qquad \leq  C\left(  \norm{ (\psi_0,\psi_1) }_{H^{1}(\mathbb{R})\times L^{2}(\mathbb{R})}+ \int_{0}^{T} \norm{f}_{H^{k-1}(\mathbb{R})}(t) dt \right)\exp\left( C\int_0^T \norm{\partial a}_{L^{\infty}(\mathbb{R})}(t)\right).
		\end{aligned}
	\end{equation} 
\end{lem}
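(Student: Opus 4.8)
The plan is to prove the stated inequality by the classical energy method, treating it purely as an a priori estimate for the given solution $\psi$; no existence or fixed-point argument is needed for the Lemma itself. First I would record the structural fact that, in the situation of interest, the divergence-form operator $\partial_\mu(a^{\mu\nu}\partial_\nu\,\cdot\,)$ is the hyperbolic operator attached to $a^{\mu\nu}=\alpha\, m^{\mu\nu}$, so that $-a^{00}=\alpha$ and $a^{11}=\alpha$ are both bounded above and, crucially, below away from zero. This uniform hyperbolicity is exactly what renders the natural energy coercive, and I would fold it into the standing hypotheses absorbed by the constant $C$.

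\emph{Base case $k=1$.} Define the energy $E(t):=\tfrac12\int\big(\alpha(\partial_t\psi)^2+\alpha(\partial_x\psi)^2\big)(t,x)\,dx$, which by the two-sided bounds on $\alpha$ is comparable to $\norm{(\psi,\partial_t\psi)}_{\mathcal H}^2$, once the undifferentiated part is controlled via $\tfrac{d}{dt}\norm{\psi}_{L^2}\le\norm{\partial_t\psi}_{L^2}\le E^{1/2}$. Differentiating $E$ in $t$, substituting the equation to eliminate $\partial_t^2\psi$, and integrating by parts once in $x$, all top-order terms cancel and one is left with
\[
\dot E(t)=\tfrac12\int\partial_t\alpha\,\big((\partial_x\psi)^2-(\partial_t\psi)^2\big)\,dx-\int f\,\partial_t\psi\,dx.
\]
Hence $\abs{\dot E}\lesssim\norm{\partial a}_{L^\infty}E+\norm{f}_{L^2}E^{1/2}$, and dividing by $E^{1/2}$ gives $\tfrac{d}{dt}E^{1/2}\lesssim\norm{\partial a}_{L^\infty}E^{1/2}+\norm{f}_{L^2}$; Grönwall's inequality then produces precisely the claimed bound, with the exponent $C\int_0^T\norm{\partial a}_{L^\infty}$.

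\emph{Higher order $k\ge2$.} I would differentiate the equation $m=1,\dots,k-1$ times in $x$: the function $\psi_m:=\partial_x^m\psi$ solves the same operator with source $f_m:=\partial_x^m f+[\partial_x^m,\partial_\mu(a^{\mu\nu}\partial_\nu\,\cdot\,)]\psi$, the commutator expanding into terms of the form $\partial_\mu(\partial_x^j a^{\mu\nu}\,\partial_\nu\partial_x^{m-j}\psi)$ with $j\ge1$. These are estimated in $L^2$ by $\norm{\partial^j a}_{L^\infty}$ times derivatives of $\psi$ of order at most $m$, i.e. by the already-controlled lower-order energies; the single top-order contribution $(j=1)$ is what feeds the factor $\norm{\partial a}_{L^\infty}$ into the Grönwall exponent, while the genuinely higher derivatives of $a$ are bounded by hypothesis and swallowed into $C=C(k)$. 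Applying the base estimate to each $\psi_m$ and summing over $m$ (equivalently, inducting on $k$) closes the argument.

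The main obstacle I anticipate is twofold. First, the coercivity of $E$ genuinely requires the lower bound on $\alpha$; this is exactly why the paper insists that $\alpha$ remain bounded away from zero (cf. \eqref{determinante}), since the estimate degenerates at $\alpha=0$. Second, the bookkeeping in the higher-order commutators must be carried out carefully so that \emph{only} $\norm{\partial a}_{L^\infty}$—and not higher Sobolev norms of $a$—enters the exponential, with all remaining dependence on $a$ being polynomial and hence absorbable into $C(k)$. Isolating the top-order commutator term and treating the remainder as a lower-order forcing term is the technical key to achieving this clean separation.
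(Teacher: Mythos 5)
The paper does not prove this lemma at all: it is imported verbatim as a known result from Sogge's lecture notes (the reference \cite{sogge} cited just before the statement), and the only thing the authors do with it is identify $a^{00}=a^{11}=\alpha$, $a^{01}=a^{10}=0$ in the sketch of Proposition \ref{LOCAL}. Your blind proof is therefore not competing with an argument in the paper; it reconstructs the standard energy-method proof that the cited reference supplies, and it does so correctly in outline: the cancellation of top-order terms after substituting the equation and integrating by parts, the resulting identity $\dot E=\tfrac12\int\partial_t\alpha\,((\partial_x\psi)^2-(\partial_t\psi)^2)\,dx-\int f\,\partial_t\psi\,dx$, the division by $E^{1/2}$ and Gr\"onwall, and the commutator bookkeeping for $k\ge 2$ are all the right moves. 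You also correctly identify the one hypothesis the lemma statement leaves implicit, namely uniform hyperbolicity (a two-sided bound on $\alpha$), without which the energy is not coercive; this is indeed supplied in the paper's application by the smallness assumptions on $\tilde\alpha_0,\alpha_1$. Two small points deserve care if you write this out in full. First, in the higher-order step the commutator terms $\partial_\mu(\partial_x^j a^{\mu\nu}\partial_\nu\partial_x^{m-j}\psi)$ with $j=1$ contain derivatives of $\psi$ of order $m+1$ (including $\partial_t^2\partial_x^{m-1}\psi$, which must be eliminated by substituting the equation again or by keeping the commutator in divergence form inside the energy identity), so the phrase ``order at most $m$'' is not accurate for that term even though your subsequent treatment of it is. Second, controlling the undifferentiated $L^2$ part of $\psi$ via $\tfrac{d}{dt}\norm{\psi}_{L^2}\le\norm{\partial_t\psi}_{L^2}$ introduces a factor growing linearly in $T$, so the constant cannot be independent of $T$ for that component; this is a standard imprecision inherited from the statement itself rather than a defect of your argument.
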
	 
Now, we can propose the following result for the initial-value problem \eqref{IVP}:
 \begin{prop}\label{LOCAL}
	 	If $(\Psi_0, \Psi_{1})$ satisfies the condition  (\ref{condicion1}) with an appropriate constant $D\geq1$, then: 
	 	\begin{itemize}
	 		\item[(1)] \mbox{(Existence and uniqueness of local-in-time solutions)}. There exists 
	 		\[ 
			T=T\left( \norm{ \left( \tilde{\Lambda}_0, \phi_0 \right) }_{H^1(\mathbb{R}) \times H^1(\mathbb{R})}, \norm{ \left( \tilde{\Lambda}_1, \phi_1 \right) }_{L^2(\mathbb{R}) \times L^2(\mathbb{R})},\lambda \right) > 0,\]
	 		such that there exists a (classical) solution $\Psi$ to  (\refeq{IVP}) with 
	 		\begin{equation*}
	 			(\Psi,\partial_t \Psi)\in L^{\infty}([0,T];\mathcal{H}).
	 		\end{equation*}
	 		Moreover, the solution is unique in this function space.
			
			\medskip
			
	 		\item[(2)] \mbox{(Continuous dependence on the initial data)}. Let $\Psi_{0}^{(i)}, \Psi_{1}^{(i)}$ be sequence such that $\Psi_{0}^{(i)} \longrightarrow \Psi_{0}$ in $H^1(\mathbb{R})\times H^{1}(\mathbb{R})$ and $\Psi_{1}^{(i)} \longrightarrow \Psi_{1}$ in $L^2(\mathbb{R})\times L^2(\mathbb{R})$ as $i \longrightarrow \infty.$ Then taking $T>0$ sufficiently small, we have 
	 		\begin{equation*}
	 			\norm{ \left(\Psi^{(i)}-\Psi, \partial_t(\Psi^{(i)}-\Psi) \right) }_{L^{\infty}([0,T]; \mathcal{H})} \longrightarrow 0.
	 		\end{equation*}	 
	 		Here $\Psi$ is the solution arising from data $(\Psi_0,\Psi_1)$ and $\Psi^{(i)}$ is the solution arising from data $\left(\Psi_0^{(i)},\Psi_1^{(i)} \right).$
	 	\end{itemize}	 	
	 \end{prop}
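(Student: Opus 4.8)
The plan is to establish Proposition~\ref{LOCAL} by a Picard iteration built on the linear energy estimate \eqref{Eenergia}, treating \eqref{IVP} as a semilinear wave system whose principal coefficient $\alpha$ is a fixed, explicitly known function. First I would record the properties of $\alpha$. Since $\alpha=1+\tilde\alpha$ solves the 1D wave equation \eqref{IC_alpha}, D'Alembert's formula gives a closed expression for it; the positivity $\alpha_1>0$ together with the smallness condition (2) yields, on any compact interval $[0,T]$, a two-sided bound $0<c(T)\le \alpha\le C(T)$ and uniform bounds on all derivatives $\partial^n\alpha$ (because $\tilde\alpha_0\in C_c^\infty$ and $\alpha_1\in\mathcal S$). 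Hence $a^{\mu\nu}=m^{\mu\nu}\alpha$ satisfies the hypotheses of the energy lemma, and in particular $\int_0^T\norm{\partial a}_{L^\infty}(t)\,dt<\infty$, so the exponential factor in \eqref{Eenergia} is finite on $[0,T]$.

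Second, I would set up the iteration: let $\Psi^{(0)}$ solve the homogeneous problem $\partial_\mu(m^{\mu\nu}\alpha\partial_\nu\Psi^{(0)})=0$ with data $(\Psi_0,\Psi_1)$, and define $\Psi^{(n+1)}$ as the solution of the linear equation with source $F(\Psi^{(n)},\partial\Psi^{(n)})$ and the same data. The scheme should be run in a ball of $C([0,T];H^2\times H^1)$ of radius proportional to $\norm{(\Psi_0,\Psi_1)}_{\mathcal H}$, with $T$ small so that \eqref{Eenergia} (and its higher-order analogue obtained by differentiating the equation) reproduces the ball. Two structural points are essential here: in one space dimension $H^1(\mathbb R)$ is an algebra, so the quadratic-in-$\partial\Psi$ nonlinearity maps $H^2\times H^1$ into $H^1$ with $\norm{F}_{H^1}$ bounded by $\norm{\partial\Psi}_{H^1}^2$ times smooth functions of $\norm{\tilde\Lambda}_{H^1}$; and the denominator $\sinh^2(\lambda+\tilde\Lambda)$ appearing in $F_2$ must stay bounded away from zero. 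This last nondegeneracy is precisely where \eqref{condicion1} is used: the embedding $H^1(\mathbb R)\hookrightarrow L^\infty$ and the choice $D\ge 1$ absorbing its constant force $\norm{\tilde\Lambda(t)}_{L^\infty}\le \lambda/2$, hence $\lambda+\tilde\Lambda\ge \lambda/2>0$, keeping $F$ a smooth function of its arguments throughout the iteration.

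Third, contraction and uniqueness. I would estimate $\Psi^{(n+1)}-\Psi^{(n)}$ by applying \eqref{Eenergia} with $k=1$ to the difference equation, whose source is $F(\Psi^{(n)},\partial\Psi^{(n)})-F(\Psi^{(n-1)},\partial\Psi^{(n-1)})$. Splitting this schematically into a (difference of coefficients)$\times(\partial\Psi)^2$ term and a (coefficient)$\times$(difference of quadratic terms) term, and using the uniform $H^2$ bound to control $\norm{\partial\Psi^{(n)}}_{L^\infty}$ and $\norm{\partial\Psi^{(n-1)}}_{L^\infty}$, the source is bounded in $L^2$ by $C(R)\,\norm{(\Psi^{(n)}-\Psi^{(n-1)},\partial_t(\Psi^{(n)}-\Psi^{(n-1)}))}_{L^\infty([0,T];\mathcal H)}$. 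Since the data cancels, \eqref{Eenergia} then gives a factor $C\,T\,C(R)$, which is a contraction for $T$ small; the fixed point $\Psi$ is the unique solution of \eqref{IVP} in $L^\infty([0,T];\mathcal H)$, and uniqueness in the full class follows by running the same difference estimate on two putative solutions.

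Finally, continuous dependence (part (2)) is immediate from the Lipschitz bound underlying the contraction: for $(\Psi_0^{(i)},\Psi_1^{(i)})\to(\Psi_0,\Psi_1)$ in $H^1\times H^1\times L^2\times L^2$, applying \eqref{Eenergia} with $k=1$ to the difference of the two solutions yields $\norm{(\Psi^{(i)}-\Psi,\partial_t(\Psi^{(i)}-\Psi))}_{L^\infty([0,T];\mathcal H)}\le C\,\norm{(\Psi_0^{(i)}-\Psi_0,\Psi_1^{(i)}-\Psi_1)}_{\mathcal H}$, which tends to $0$. I expect the main obstacle to be the bookkeeping of this two-tier regularity argument: one must carry the uniform bounds at the $H^2\times H^1$ level—needed to obtain the $W^{1,\infty}$ control of $\partial\Psi$ that the quadratic nonlinearity requires—while simultaneously running the contraction and continuous-dependence estimates at the lower level $\mathcal H=H^1\times L^2$, all the while ensuring that the iterates never leave the region $\lambda+\tilde\Lambda\ge\lambda/2$ on which $F$ stays smooth.
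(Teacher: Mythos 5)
Your overall route --- treat $\alpha$ as a known, explicitly solvable coefficient via D'Alembert, identify $a^{00}=a^{11}=\alpha$, $a^{01}=a^{10}=0$, and run a Picard iteration on the linear energy estimate \eqref{Eenergia}, using \eqref{condicion1} together with the embedding $H^1(\mathbb R)\hookrightarrow L^\infty(\mathbb R)$ to keep $\lambda+\tilde\Lambda\ge\lambda/2$ and hence $F_2$ nonsingular --- is exactly the one the paper takes; the printed ``proof'' is only a two-line sketch that makes this identification of the coefficients and then defers all details to Proposition 1 of \cite{JT}. In spirit your proposal is faithful and considerably more explicit than what is in the paper.

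There is, however, one step in your write-up that does not work as stated. You propose to run the scheme ``in a ball of $C([0,T];H^2\times H^1)$ of radius proportional to $\norm{(\Psi_0,\Psi_1)}_{\mathcal H}$''. Since the linear wave flow does not gain derivatives, data that are merely in $\mathcal H=H^1(\mathbb R)\times H^1(\mathbb R)\times L^2(\mathbb R)\times L^2(\mathbb R)$ produce iterates in no better space, so no such ball is available, and the $W^{1,\infty}$ control of $\partial\Psi^{(n)}$ on which both your contraction and your uniqueness-in-$L^{\infty}([0,T];\mathcal H)$ arguments rest is not justified. If instead one tries to close the iteration directly at the level of $\mathcal H$, the quadratic source $F\sim(\partial\Psi)^2$ lies only in $L^1_x$, not in $L^2_x$, so \eqref{Eenergia} with $k=1$ cannot be applied to it verbatim. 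One must therefore either (a) restrict to more regular data --- harmless for the paper's application, where the data in \eqref{Ciniciales} are smooth and compactly supported, but then the time of existence and the uniqueness class should be stated at that higher regularity rather than in terms of the $\mathcal H$-norm alone --- or (b) exploit the fact that $F$ is built from the null form $Q_0$, i.e.\ from products of the form $L\Psi\cdot\underline{L}\Psi$, and integrate along the characteristics $C_{t,u}$, $C_{t,\underline{u}}$, which is the mechanism that lets $1+1$-dimensional wave-map-type systems close at the $H^1\times L^2$ level. Your proposal correctly flags this two-tier bookkeeping as ``the main obstacle'' but does not resolve it; to be a complete proof it needs one of these two fixes made explicit.
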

\begin{proof}[Sketch of proof]
The idea of the proof is standard in the literature (see e.g. Luk \cite{Luk}), in this case we must identify the component of $a$ in \eqref{Eenergia} con $a^{00}=a^{11}:= \alpha(t,x)$, and $a^{01}=a^{10}=0.$ Then we can use the energy estimate  \eqref{Eenergia}. The rest of the proof, for this particular system, can be seen in detail in Proposition 1 \cite{JT}; only an adaptation in the estimation of energy to be used is required. This ends the proof of Proposition \ref{LOCAL}.
\end{proof}
%%%%%%%%
\section{Proof of Global Existence -- Theorem \ref{GLOBAL0}}\label{Sect:3}

\subsection{Preliminaries} 
We start out this section present to some basic definitions, and certain important results that will be useful to describe our result. {\color{blue} We shall use the notation from \cite{Luli2018} and our previous work \cite{JT}}. We will use two coordinate systems: the standard Cartesian coordinates $(t,x)$ and the null coordinates $(u, \underline{u})$: %. The coordinate functions in the null coordinates are the standard optical functions defined as follows:
\begin{equation}\label{u bar u}
	u:=\dfrac{t+x}{2}, \quad \underline{u}:= \dfrac{t-x}{2},
\end{equation}
and consider the two null vector fields defined globally as 
\begin{equation}\label{L Lbar}
L=\partial_t + \partial_x,\quad \underline{L}= \partial_t - \partial_x.
\end{equation}
In the  same way as in \cite{Alinhacbook,Luli2018, JT} we consider the weight function $\varphi$ defined as 
\begin{equation}\label{varphi}
\varphi(u):=(1+|u|^2)^{1+\delta}\quad  \mbox{with} \quad 0< \delta <1/3.
\end{equation}

Recall that from initial conditions \eqref{Ciniciales} we have $\alpha_0:= 1+ \tilde \alpha_0$ and also have the following facts, which are easy to check:
\begin{itemize}
	\item[(i)] Since $\tilde\alpha_0\in C_c^{\infty}(\mathbb{R})$ and $\alpha_1 \in \mathcal{S}(\mathbb{R})$, with $\alpha_1>0$, one has for some fixed constant $K_1,K_2>0$ such that,
	\begin{equation}\label{eqn:alpha0}
		||\tilde \alpha_0 ||_{\infty}< \frac{\gamma}{2}, \quad   \quad | \alpha_0^{(n)}(2u)| \leq \frac{K_1\gamma}{\varphi^{3/4}(u)}, \quad n=1,2,
	\end{equation}
	and 
	\begin{equation}\label{eqn:alpha1}
		|\alpha_1^{(n)}(2u)| \leq \frac{K_2\gamma}{\varphi^{3/4}(u)}, \quad  n=0,1.
	\end{equation}
	\item[(ii)] Using the classical {\color{blue}d'Alembert's} formula in the third equation in \eqref{sistema1} which correspond to one-dimensional wave equation for $\alpha$, we obtain :
	\begin{equation}\label{eqn:alpha_DA}
		\alpha(t,x)= \frac{1}{2}\left(2+\tilde \alpha_0 (2u)+\tilde \alpha_0(-2\underline{u})+\int_0^{2u} \alpha_1(s)ds-\int_{0}^{-2\underline{u}} \alpha_1(s)ds\right),
	\end{equation}
	\item[(iii)] Moreover, the derivatives of the function $\alpha$ can will be describe as:
	\begin{equation}\label{eqns:alpha}
		\begin{cases}
			\partial_x\alpha= \frac{1}{2}\left(\tilde \alpha_0'(2u)+\tilde \alpha_0'(-2\underline{u})+\alpha_1(2u)-\alpha_1(-2\underline{u}) \right)\\
			\partial_t\alpha=\frac{1}{2}\left(\tilde \alpha_0'(2u)-\tilde \alpha_0'(-2\underline{u})+\alpha_1(2u)+\alpha_1(-2\underline{u}) \right)\\
			\partial_{x}^2 \alpha= \frac{1}{2}\left(\tilde \alpha_0''(2u)+\tilde \alpha_0''(-2\underline{u})+\alpha_1'(2u)-\alpha_1'(-2\underline{u}) \right)\\
			\partial_{t,x}\alpha=  \frac{1}{2}\left(\tilde \alpha_0''(2u)-\tilde \alpha_0''(-2\underline{u})+\alpha_1'(2u)+\alpha_1'(-2\underline{u}) \right).
		\end{cases}
	\end{equation}
	\item[(iv)]  The following relations for the null vector field $L$ and $\underline{L}$ hold: 
	\begin{equation}\label{Llogalpha}
		\begin{aligned}
			|L(\ln \alpha)|\leq &~{}  |\partial_x\alpha +\partial_t \alpha|=|\tilde \alpha_0'(2u)+\alpha_1(2u)|\lesssim \frac{K_1 \gamma}{\varphi^{3/4}(u)}.\\
			|L(\partial_x(\ln \alpha))|\leq  &~{} \frac{1}{2} \left| \left(\tilde \alpha_0'(2u)+\tilde\alpha_0'(-2\underline{u})+\alpha_1'(2u)-\alpha_1'(-2\underline{u}) \right)\left(\tilde \alpha_0'(2u)+\alpha_1(2u) \right) \right|\\
			&~{}	+|\tilde\alpha_0''(2u)+\alpha_1'(2u)| \lesssim \frac{K_1 \gamma}{\varphi^{3/4}(u)}.\\
			|\underline{L}(\partial_x(\ln \alpha))|\leq  &~{} \frac{1}{2} \left| \left(\tilde \alpha_0'(2u)+\tilde\alpha_0'(-2\underline{u})+\alpha_1'(2u)-\alpha_1'(-2\underline{u}) \right)\left(\tilde \alpha_0'(2u)+\alpha_1(2u)\right) \right|\\
			&~{}	+|\tilde\alpha_0''(2u)+\alpha_1'(2u)| \lesssim \frac{K_1 \gamma}{\varphi^{3/4}(u)}.
		\end{aligned}
	\end{equation}
\end{itemize}
From now on, we will consider  the conformal Killing vector fields on $\mathbb{R}^{1+1}$ given by
\begin{equation*}
	(1+\abs{u}^2)^{1+\delta}L, \quad (1+\abs{\underline{u}^2})^{1+\delta}\underline{L},
\end{equation*}
with $0 < \delta << 1$. We also consider the following integration regions: 
\begin{itemize}
	\item $S_{t_0}$  denotes the following time slice in $\mathbb{R}^{1+1}$: 
	\begin{align*}%\label{S_t}
		S_{t_0} := \left\{(t,x):\; t=t_0 \right\}.
	\end{align*}
	\item $D_{t_0}$ denotes the following region of spacetime 
	\begin{equation*}%\label{D_t}
		D_{t_o}:= \left\{ (t,x):\;  0 \leq t \leq t_0 \right\}, \quad D_{t_0}=\bigcup_{0\leq t \leq t_0} S_{t_0}.
	\end{equation*}
\end{itemize}
The level sets of the functions $u$ and $\underline{u}$ define two global null foliations of $D_{t_0}$. More precisely, given $t_0>0$, $u_0$ and $\underline{u}_0$, we define the rightward null curve segment $C_{t_0,\underline{u}_0}$ as:
\begin{equation}\label{Ctuu}
	C_{t_0,\underline{u}_0} := \left\{(t,x): \; u=\frac{t-x}{2} =\underline{u}_0,\, 0\leq t \leq t_0\right\},
\end{equation}
and the segment of the null curve to the left $C_{t_0,{u}_0}$ as:
\begin{equation}\label{Ctu}
	C_{t_0,u_0} := \left\{(t,x): \; u=\frac{t+x}{2} =u_0,\, 0\leq t \leq t_0\right\}.
\end{equation}
The space time region $D_{t_0}$ is foliated by $C_{t_0,\underline{u}_0}$ for $\underline{u}\in \mathbb{R}$, and by $C_{t_0, u_0}$ for $u\in \mathbb{R}$. 

\medskip

Finally, we will consider the following energy estimate\footnote{\color{blue} Note that the proof of this energy estimate (see \cite{Luli2018}) is only valid for the one-dimensional wave equation. In this dimension any vector field $F(u)\partial u+G(v)\partial v$  is conformally Killing.}  proposed in \cite{alinhac2009hyperbolic, Luli2018} for the scalar linear {\color{blue} one-dimensional} wave equation $\square \psi = \rho$ ($\tau\in [0, t]$ in $C_{t,\underline{u}}$ and $C_{t,u}$). There exists $C_0>0$ such that 
\begin{equation}
	\begin{aligned}\label{EEnergia}
		& ~{}\int_{S_t} \left[(1+\abs{\underline{u} }^2)^{1+\delta}\abs{\underline{L}\psi}^2+ (1+\abs{u}^2)^{1+\delta}\abs{L\psi}^2\right]dx \\
		& \qquad +\sup_{u\in \mathbb{R}}\int_{C_{t,u}} (1+\abs{\underline{u}}^2)^{1+\delta}\abs{\underline{L}\psi}^2d\tau + \sup_{\underline{u}\in \mathbb{R}}\int_{C_{t,\underline{u}}} (1+\abs{u}^2)^{1+\delta}\abs{L\psi}^2 d\tau \\
		&~{} \qquad \leq C_0\int_{S_0}\left[ (1+\abs{\underline{u}}^2)^{1+\delta}\abs{\underline{L}\psi}^2+(1+\abs{u}^2)^{1+\delta}\abs{L\psi}^2 \right]dx\\
		& ~{} \qquad \quad + C_0 \iint_{D_t} \left[(1+\abs{\underline{u}}^2)^{1+\delta}\abs{\underline{L}\psi} + (1+\abs{u}^2)^{1+\delta}\abs{L\psi}\right]\abs{\rho} d\tau dx.
	\end{aligned}
\end{equation}

\subsection{Global existence for $(\tilde \Lambda,\phi)$} Recall that $\alpha$ was already solved in \eqref{eqn:alpha_DA} and from \eqref{sistema1} $\ln f$ is completely determined if we know $(\Lambda,\phi)$. Now we state a modified version of the main theorem, written in the variables $(\tilde \Lambda,\phi)$, introduced in \eqref{tLambda_talpha}.

\medskip

	For the forthcoming analysis it is it is convenient to introduce a fundamental null form, which is defined as the following bilinear form:
	\begin{equation*}%\label{NC}
		Q_0 (\phi,\tilde{\Lambda})= m^{\alpha \beta}\partial_{\alpha} \phi \partial_{\beta} \tilde{\Lambda},
	\end{equation*}
	where $m_{\alpha \beta}$ to denote the standard Minkowski metric on $\mathbb{R}^{1+1}$. Then, using this definition, one can rewrite the first two equations of the system \eqref{sistema1} in terms of null forms as follows: 
	\begin{equation}
		\begin{cases*}
			\square \tilde{\Lambda}=Q_0(\ln \alpha, \tilde{\Lambda})-2\sinh(2\lambda +2\tilde{\Lambda})Q_0(\phi,\phi),\\
			\square \phi =Q_0(\ln \alpha, \phi)+\dfrac{\sinh(2\lambda +2\tilde{\Lambda})}{\sinh^2(\lambda +\tilde{\Lambda})}Q_0(\phi,\tilde{\Lambda}).
		\end{cases*}\label{problema3}
	\end{equation}
	It can also be noticed that the null structure is ``quasi-preserved'' after differentiating with respect to $x$, in the sense that 
	\begin{equation}\label{derivadanullC}
		\partial_xQ_0(\phi,\tilde{\Lambda})=Q_0(\partial_x \phi,\tilde{\Lambda})+Q_0(\phi,\partial_x \tilde{\Lambda}).
	\end{equation}
	Additionally, we have the following relation between  the null form and the Killing vector fields $L$ and $\underline{L}$
	\begin{equation}\label{importante}
		Q_0(\partial_{x}^p \phi, \partial_{x}^q \tilde{\Lambda}) \lesssim \abs{L\partial_x^p \phi}\abs{\underline{L}\partial_x^q \tilde{\Lambda}}+  \abs{\underline{L}\partial_x^p \phi}\abs{L\partial_x^q \tilde{\Lambda}},
	\end{equation}
where the implicit constant is independent of $(\tilde \Lambda, \phi)$.

\medskip

Motivated by {\color{blue} estimate} (\refeq{EEnergia}) and \cite{Alinhacbook,Luli2018,JT}, we define the space-time weighted energy norms, valid for $k=0,1$:
\begin{equation}\label{energies}
	\begin{aligned}
		&	\mathcal{E}_k(t)=\int_{S_t} \left[(1+\left|\underline{u} \right|^2)^{1+\delta}\abs{\underline{L}\partial_x^k\tilde{\Lambda}}^2+  (1+\left|u \right|^2)^{1+\delta}\abs{L\partial_x^k \tilde{\Lambda}}^2\right]dx,\\
		&	\overline{\mathcal{E}}_k(t)=\int_{S_t} \left[(1+\left|\underline{u} \right|^2)^{1+\delta}\abs{\underline{L}\partial_x^k\phi}^2+  (1+\left|u \right|^2)^{1+\delta}\abs{L\partial_x^k \phi}^2\right]dx,\\
		& \mathcal{F}_k(t)= \sup_{u\in \mathbb{R}} \int_{C_{t,u}} (1+\left|\underline{u} \right|^2)^{1+\delta} \left| \underline{L}\partial_x^k\tilde{\Lambda} \right|^2ds+ \sup_{\underline{u}\in \mathbb{R}} \int_{C_{t,\underline{u}}} (1+\left|u \right|^2)^{1+\delta}\abs{L\partial_x^k \tilde{\Lambda}}^2ds,\\
		& \overline{\mathcal{F}}_k(t)= \sup_{u\in \mathbb{R}} \int_{C_{t,u}} (1+\left|\underline{u} \right|^2)^{1+\delta} \left| \underline{L}\partial_x^k\phi \right|^2ds+ \sup_{\underline{u}\in \mathbb{R}} \int_{C_{t,\underline{u}}} (1+\left|u \right|^2)^{1+\delta}\abs{L\partial_x^k \phi}^2ds.
	\end{aligned}
\end{equation}
Then, using \eqref{energies} we define the total energy norms as follows:
\[
\mathcal{E}(t)= \mathcal{E}_0 (t)+\mathcal{E}_1(t).
\] 
Analogously one defines $\mathcal{F}(t)$, $\overline{\mathcal{E}}(t)$, and $\overline{\mathcal{F}}(t).$ 
\begin{rem}
	We note that if $t=0$ then from \eqref{Ctuu} and \eqref{Ctu} one has $\mathcal{F}(0)=\overline{\mathcal{F}}(0)=0$. Also, for $\mathcal{E}(t)$ the initial data determines a constant $C_1$ so that
	\begin{equation}\label{def_C1}
		\mathcal{E}(0)=C_1 \varepsilon^2.
	\end{equation}
This exact bound will be used by the end of the proof of global existence, more specifically in \eqref{cota_casi_final}.	
\end{rem}
We are now ready to state and prove the main result of this section:
\begin{thm}\label{thm_aux}
	Under the assumptions in Theorem \ref{GLOBAL0}, the following are satisfied. Assume that the solution $(\tilde{\Lambda},\phi)$ of the system \eqref{problema3}  exists for $t\in [0,T^{*}]$ satisfying the bounds
	\begin{align}\label{supuesto}
		&	\mathcal{E}(t)+\mathcal{F}(t) \leq 6C_0C_1\varepsilon^2,\\
		&	\overline{\mathcal{E}}(t)+ \overline{\mathcal{F}}(t) \leq 6C_0\overline{C}_1\varepsilon^2\label{supuesto1},
	\end{align}
	and
	\begin{equation}\label{condicionlambda}
		\sup_{t \in [0,T^{*}]}\norm{\tilde{\Lambda}}_{L^{\infty}(\mathbb{R})} \leq \dfrac{\lambda}{2}.
	\end{equation}
	Then for all $t\in [0,T^*]$ there exists a universal constant $\varepsilon_0$ (independent of $T^{*}$) such that the previous estimates are improved for all $\varepsilon \leq \varepsilon_0$.
\end{thm}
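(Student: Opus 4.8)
The plan is to run a standard continuity (bootstrap) argument: I apply the weighted energy inequality \eqref{EEnergia} to each of the two equations in \eqref{problema3}, and to their first $x$-derivatives, and show that the bootstrap constants $6C_0C_1$ and $6C_0\overline{C}_1$ in \eqref{supuesto}--\eqref{supuesto1} can be replaced by strictly smaller ones once $\gamma$ and $\varepsilon_0$ are taken sufficiently small; by continuity this improves the a priori bounds on $[0,T^*]$ and ultimately yields global existence. As a preliminary, I record that under \eqref{condicionlambda} one has $\lambda+\tilde{\Lambda}\geq\lambda/2>0$ on $[0,T^*]$, so that the coefficients $\sinh(2\lambda+2\tilde{\Lambda})$ and $\sinh(2\lambda+2\tilde{\Lambda})/\sinh^2(\lambda+\tilde{\Lambda})$ are uniformly bounded and the latter is nonsingular (this is where $\lambda\neq0$ is essential); consequently $F_1,F_2$ are genuinely quadratic in $\partial\Psi$ with smooth bounded coefficients, and their $x$-derivatives are handled through \eqref{derivadanullC}.

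The core is the estimate of the spacetime double integral on the right-hand side of \eqref{EEnergia}, with $\rho$ equal to each source term. I split $\rho$ into the linear variable-coefficient piece $Q_0(\ln\alpha,\cdot)$ and the genuinely quadratic pieces. For the linear piece I use the null decomposition \eqref{importante} together with the pointwise decay \eqref{Llogalpha} of the derivatives of $\ln\alpha$, which are of size $\gamma\varphi^{-3/4}(u)$; pairing the energy weight $\varphi$ against $\varphi^{-3/4}$ and applying Cauchy--Schwarz along the null foliation produces a contribution of the form $C\gamma(\mathcal{E}(t)+\mathcal{F}(t))$ for the $\tilde{\Lambda}$-energies (and the analogue for $\phi$). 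This term carries \emph{no} power of $\varepsilon$, so its smallness must come entirely from $\gamma$, and it is absorbed by fixing $\gamma$ small first, independently of $\varepsilon$. At the level $k=1$ one differentiates via \eqref{derivadanullC}, producing in addition the term $Q_0(\partial_x\ln\alpha,\cdot)$, controlled by the second and third lines of \eqref{Llogalpha} in exactly the same way.

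For the quadratic pieces, such as $\sinh(2\lambda+2\tilde{\Lambda})Q_0(\phi,\phi)$ and its $x$-derivative, the decisive structural input is again \eqref{importante}: each term pairs a ``good'' derivative ($L$ or $\underline{L}$) against the opposite null direction, so the surplus weight $\varphi$ can be distributed to render the spacetime integral convergent. I place one factor in $L^\infty_x$ by a weighted Sobolev/Gagliardo--Nirenberg inequality of the type used in \cite{Luli2018,JT} (this is where retaining the $k=0,1$ hierarchy is needed, to keep one derivative free for the pointwise control), and integrate the remaining weighted $L^2$ factor over the null curves $C_{t,u},C_{t,\underline{u}}$. Inserting the bootstrap bounds \eqref{supuesto}--\eqref{supuesto1} yields a contribution $C\varepsilon\big(\mathcal{E}+\mathcal{F}+\overline{\mathcal{E}}+\overline{\mathcal{F}}\big)(t)\lesssim C_0C_1\varepsilon^{3}$.

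Collecting all contributions and using $\mathcal{E}(0)=C_1\varepsilon^2$, $\mathcal{F}(0)=0$ from \eqref{def_C1}, I obtain
\[
\mathcal{E}(t)+\mathcal{F}(t)\le C_0C_1\varepsilon^2 + C(\gamma+\varepsilon)\big(\mathcal{E}+\mathcal{F}+\overline{\mathcal{E}}+\overline{\mathcal{F}}\big)(t),
\]
together with the same inequality for the barred energies. Feeding the bootstrap assumptions into the right-hand side and choosing first $\gamma$ and then $\varepsilon_0$ small enough, the last term is made $\le 5C_0C_1\varepsilon^2$, so that \eqref{supuesto} is improved to, say, $\mathcal{E}(t)+\mathcal{F}(t)\le 3C_0C_1\varepsilon^2$, and likewise for \eqref{supuesto1}. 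Finally, to improve \eqref{condicionlambda} I control $\norm{\tilde{\Lambda}}_{L^\infty}$ by the (now improved) energies through the weighted Sobolev embedding of \cite{Luli2018,JT}, obtaining $\norm{\tilde{\Lambda}}_{L^\infty}=O(\varepsilon)\le\lambda/2$ for $\varepsilon\le\varepsilon_0$. I expect the main obstacle to be precisely the linear variable-coefficient term $Q_0(\ln\alpha,\cdot)$: its smallness is purely geometric, coming from $\gamma$ and the decay $\varphi^{-3/4}$ in \eqref{Llogalpha}, and one must check that these weights are genuinely compatible with $\varphi=(1+|u|^2)^{1+\delta}$ so that the absorbed constant is truly of order $\gamma$ rather than merely finite.
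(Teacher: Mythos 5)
Your proposal is correct and follows essentially the same route as the paper: the weighted energy estimate \eqref{EEnergia} applied to \eqref{problema3} and its $x$-derivative, the splitting of the source into the variable-coefficient terms $Q_0(\ln\alpha,\cdot)$ (controlled via \eqref{Llogalpha} and absorbed by taking $\gamma$ small independently of $\varepsilon$) and the genuinely quadratic null-form terms (controlled via \eqref{importante}, the pointwise decay of Lemma \ref{lema1}, and integration along the null foliations, yielding $O(\varepsilon^3)$), followed by the recovery of \eqref{condicionlambda} from the pointwise bounds. The only caveat is bookkeeping: the paper's inequality starts from $2C_0\mathcal{E}(0)=2C_0C_1\varepsilon^2$ and closes at $\tfrac{9}{2}C_0C_1\varepsilon^2$, so your displayed constants should be adjusted accordingly, but this does not affect the argument.
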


The previous result ensures that the solution $(\tilde{\Lambda},\phi)$ constructed via an iterative method is global in time and satisfies the bounds \eqref{supuesto}-\eqref{condicionlambda}. With this result, we can finally conclude the proof of Theorem \ref{GLOBAL0}.

\subsection{Proof of Theorem \ref{thm_aux}} 
For simplicity, we work with the first equation of the system \eqref{problema3}. An analogous study of the equation for the field $\phi$ shows the same outcome, proving that $\phi$ is also globally defined.

\medskip

The proof is based on the bootstrap method; i.e., we will assume that the weighted energies $\mathcal{E}(t)$, $\mathcal{F}(t)$  are bounded by some {\color{blue} particular}
 constant. Then, we will show that the corresponding solution defined in $[0,T^*]$ decays. Since the initial data are small, this allows us to show that the weighted energies are bounded by some better constant. Thus, by continuity, we conclude that the weighted energy cannot grow to infinity in any finite time interval and therefore, using the local existence theorem, the solution exists for all time. 

\medskip

This procedure has been done before in several works, see e.g. \cite{Luli2018, JT}. However, in this work, we have several complications coming from the new wave field $\alpha$, which has to be correctly estimated in order to preserve the wave-like character of the system \eqref{problema3}.

\medskip

Deriving the first equation of (\refeq{problema3}) and using  \eqref{derivadanullC}  we obtain:
\begin{equation}\label{derivadas}
	\square \partial_x \tilde{\Lambda}  = \rho_1 +\rho_2,
\end{equation}
where
\begin{equation}\label{rho_12}
	\begin{cases*}
		\rho_1:= Q_0(\partial_x(\ln \alpha), \tilde{\Lambda})+Q_0(\ln \alpha, \partial_x \tilde{\Lambda}),\\
		\rho_2:= -2\Big[\sinh(2\lambda +2\tilde{\Lambda})\left(Q_0(\partial_x\phi,\phi)+Q_0(\phi,\partial_x\phi)\right)+2\partial_x\tilde{\Lambda}\cosh(2\lambda+2\tilde{\Lambda})Q_0(\phi,\phi)\Big].
	\end{cases*}
\end{equation}
We can see that the null structure is ``quasi-preserved'' after differentiating with respect to $x$. We will use a bootstrap argument as in the (3+1)-dimensional case \cite{klainerman}. Fix $\delta \in (0,1)$. Under the assumptions (\refeq{supuesto})-(\refeq{supuesto1})-(\refeq{condicionlambda}) for all $t\in [0,T^{*}]$, we assume that the solution remains regular, to later show that these bounds are maintained, with a better constant. 

\medskip

Consider $k=0,1$. Using \eqref{EEnergia} on \eqref{problema3}, with $\psi=\partial^k_x \tilde{\Lambda}$ and  \eqref{derivadas}-\eqref{rho_12}. Taking the sum over $k=0,1$, we obtain
\begin{equation}\label{EE2}
	\begin{aligned}
		&\mathcal{E}(t)+\mathcal{F}(t) \leq 2C_0 \mathcal{E}(0)\\
		&  \quad   +2C_0  \iint_{D_t} \left( (1+\left|\underline{u} \right|^2)^{1+\delta} | \underline{L}\tilde{\Lambda}|+ (1+\left|u \right|^2)^{1+\delta}|L \tilde{\Lambda}|\right)  \left| Q_0(\ln \alpha,\tilde{\Lambda})\right|  |Q_0(\phi,\phi)| \\
		&  \quad   +2C_0  \iint_{D_t} \left( (1+\left|\underline{u} \right|^2)^{1+\delta} | \underline{L}\tilde{\Lambda}|+ (1+\left|u \right|^2)^{1+\delta}|L \tilde{\Lambda}|\right) \left| \sinh(2\lambda +2\tilde{\Lambda}) \right|  |Q_0(\phi,\phi)| \\
		&   \quad          +2C_0\iint_{D_t} \left( (1+\left|\underline{u} \right|^2)^{1+\delta} | \underline{L}\partial_x\tilde{\Lambda}|+ (1+\left|u \right|^2)^{1+\delta}| L\partial_x \tilde{\Lambda}|\right) |\rho_1| \\
		& \quad +2C_0 \iint_{D_t} \left( (1+\left|\underline{u} \right|^2)^{1+\delta} | \underline{L}\partial_x\tilde{\Lambda}|+ (1+\left|u \right|^2)^{1+\delta}| L\partial_x \tilde{\Lambda}|\right) |\rho_2|  =: \sum_{j=0}^{5} A_j.
	\end{aligned}
\end{equation}
In the framework of the energy integrals already established, and given the symmetry of the terms, it is sufficient to establish the control of the terms $A_1+A_3$ in \eqref{EE2}, as follows:
\begin{align}\label{defs_Ij}
A_1+A_3= &\overbrace{\iint_{D_t}\varphi(\underline{u})|\underline{L}\tilde{\Lambda}||Q_0(\ln \alpha, \tilde{\Lambda})|}^{I_1}+ \overbrace{\iint_{D_t}\varphi(\underline{u})|\underline{L}\tilde{\Lambda}||Q_0(\partial_x(\ln \alpha), \tilde{\Lambda})+Q_0(\ln \alpha, \partial_x \tilde{\Lambda})|}^{I_2} \nonumber \\
	& +\underbrace{\iint_{D_t}\varphi(u)|L\tilde{\Lambda}||Q_0(\ln \alpha, \tilde{\Lambda})|}_{I_3}+ \underbrace{\iint_{D_t}\varphi(u)|L\tilde{\Lambda}||Q_0(\partial_x(\ln \alpha), \tilde{\Lambda})+Q_0(\ln \alpha, \partial_x \tilde{\Lambda})|}_{I_4}.
\end{align} 
Let us start with the  integral $I_1$ in the term below,  using \eqref{importante} we get:
\begin{equation}\label{int1}
\begin{aligned}
	I_1:= &~{}  \iint_{D_t}\varphi(\underline{u})|\underline{L}\tilde{\Lambda}||Q_0(\ln \alpha, \tilde{\Lambda})| \\
	\lesssim &~{} \iint_{D_t}\varphi(\underline{u})|\underline{L}\tilde{\Lambda}|[|L(\ln \alpha)||\underline{L}\tilde{\Lambda}|+|\underline{L}(\ln \alpha)||L\tilde{\Lambda}
	|] =: I_{1,1} + I_{1,2}.
	\end{aligned}
\end{equation}
We will analyze in detail each part in this integral. For this, we recall the following result {\color{blue} \cite{Luli2018}}:% for the study of the global problem in the Section \refeq{Sect:3} :
\begin{lem}[\cite{Luli2018}, Lemma 3.2]\label{lema1}
	Under assumptions  \eqref{supuesto} and \eqref{supuesto1}, there exists a universal constant  $C_2>0$ such that:
	\begin{align*}
		&	|L\tilde{\Lambda}(t,x)|\leq \dfrac{C_2 \varepsilon}{(1+|u |^2)^{1/2+\delta/2}}, &  	|L\phi(t,x)|\leq \dfrac{C_2 \varepsilon}{(1+|u |^2)^{1/2+\delta/2}},\\
		& 	|\underline{L}\tilde{\Lambda}(t,x)|\leq \dfrac{C_2 \varepsilon}{(1+|\underline{u} |^2)^{1/2+\delta/2}}, &  	|\underline{L}\phi(t,x)|\leq \dfrac{C_2 \varepsilon}{(1+|\underline{u} |^2)^{1/2+\delta/2}}.
	\end{align*} 	
\end{lem}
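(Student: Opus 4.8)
The plan is to derive each of the four pointwise estimates from the weighted energies $\mathcal{E}(t)=\mathcal{E}_0(t)+\mathcal{E}_1(t)$ (and its $\phi$-counterpart $\overline{\mathcal{E}}(t)=\overline{\mathcal{E}}_0(t)+\overline{\mathcal{E}}_1(t)$) by means of a one-dimensional weighted Gagliardo--Nirenberg inequality. Since the estimates are pointwise in $(t,x)$, I would fix $t\in[0,T^*]$ and work entirely on the slice $S_t$, where $u=\frac{t+x}{2}$ is an affine function of $x$. Noting that $(1+|u|^2)^{1/2+\delta/2}=\varphi(u)^{1/2}$, the claimed bound for $L\tilde\Lambda$ is equivalent to $\varphi(u)^{1/2}|L\tilde\Lambda(t,x)|\lesssim\varepsilon$, so I introduce the auxiliary weighted quantity $v(x):=(1+|u|^2)^{(1+\delta)/2}\,L\tilde\Lambda(t,x)$ and aim to prove $\|v\|_{L^\infty(S_t)}\lesssim\varepsilon$.

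First I would invoke the sharp one-dimensional inequality $\|v\|_{L^\infty}^2\le 2\|v\|_{L^2}\|\partial_x v\|_{L^2}$, obtained by applying the fundamental theorem of calculus to $|v|^2$ (legitimate since, by finite speed of propagation and the compactly supported data, $v$ vanishes at spatial infinity on each slice). The first factor is immediate, $\|v\|_{L^2(S_t)}^2=\int_{S_t}(1+|u|^2)^{1+\delta}|L\tilde\Lambda|^2\,dx\le\mathcal{E}_0(t)\le\mathcal{E}(t)\lesssim\varepsilon^2$ by \eqref{supuesto}. For the second factor I compute $\partial_x v$ by the product rule, producing two terms. In the term where $\partial_x$ falls on $L\tilde\Lambda$ I use the exact commutation $[\partial_x,L]=0$, so that $\partial_x(L\tilde\Lambda)=L(\partial_x\tilde\Lambda)$, whose weighted $L^2$ norm is controlled precisely by $\mathcal{E}_1(t)^{1/2}\lesssim\varepsilon$; this is exactly the reason the energies are defined for both $k=0$ and $k=1$. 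In the term where $\partial_x$ falls on the weight, the borderline estimate $|u|(1+|u|^2)^{(\delta-1)/2}\le(1+|u|^2)^{\delta/2}\le(1+|u|^2)^{(1+\delta)/2}$ shows that the contribution is dominated by $\mathcal{E}_0(t)^{1/2}\lesssim\varepsilon$. Hence $\|\partial_x v\|_{L^2(S_t)}\lesssim\varepsilon$, and the inequality above yields $\|v\|_{L^\infty}\lesssim\varepsilon$, which is the estimate for $L\tilde\Lambda$.

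The remaining three bounds follow by symmetry. For $\underline L\tilde\Lambda$ I repeat the argument with the weight $(1+|\underline u|^2)^{(1+\delta)/2}$, $\underline u=\frac{t-x}{2}$, using $[\partial_x,\underline L]=0$ and the $\underline L$-parts of $\mathcal{E}_0,\mathcal{E}_1$. For the two $\phi$-estimates I run the identical scheme with $\overline{\mathcal{E}}_0,\overline{\mathcal{E}}_1$ in place of $\mathcal{E}_0,\mathcal{E}_1$, invoking \eqref{supuesto1}. The resulting constant $C_2$ depends only on $\delta$, the Gagliardo--Nirenberg constant, and the structural constants $C_0,C_1,\overline{C}_1$, and is in particular independent of $T^*$, as required by the bootstrap.

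The main obstacle is more careful bookkeeping than a genuine difficulty: one must verify that differentiating the weight $(1+|u|^2)^{(1+\delta)/2}$ costs at most the half-power absorbed by $(1+|u|^2)^{\delta/2}$, so the resulting term remains inside the space controlled by $\mathcal{E}_0$, and one must exploit the exact commutation of $\partial_x$ with the null fields $L,\underline L$ so that the top-order term is absorbed by $\mathcal{E}_1$ rather than producing an uncontrolled derivative. This dimensional coincidence---that a single $\partial_x$-derivative of the weighted energy already yields an $L^\infty$ bound---is precisely the one-dimensional feature that lets the argument close without the higher-order commutator machinery required in $\mathbb{R}^{1+3}$.
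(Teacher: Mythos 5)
Your proof is correct and is essentially the argument behind the cited result: the paper does not reprove this lemma but quotes it from Luli--Yang--Yu \cite{Luli2018}, whose proof is exactly this slice-wise computation --- write $(1+|u|^2)^{1+\delta}|L\tilde{\Lambda}|^2(t,x)$ as the integral of its $x$-derivative, use $[\partial_x,L]=0$ together with $|\partial_x(\varphi(u)^{1/2})|\lesssim\varphi(u)^{1/2}$, and apply Cauchy--Schwarz against $\mathcal{E}_0$ and $\mathcal{E}_1$. The only cosmetic differences are that you package the fundamental-theorem-of-calculus step as the interpolation inequality $\|v\|_{L^\infty}^2\le 2\|v\|_{L^2}\|\partial_x v\|_{L^2}$, and that your appeal to finite speed of propagation is not needed: finiteness of $\|v\|_{L^2}$ and $\|\partial_x v\|_{L^2}$ already places $v$ in $H^1(\mathbb{R})$ and hence forces $v\to 0$ at spatial infinity.
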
 %\eqref{eqns:alpha}, \eqref{eqn:alpha0}-
Consider now Lemma \ref{lema1} and the definition of $\varphi$ in \eqref{varphi}. Also, consider the inequalities for $\alpha$ \eqref{eqn:alpha1}, and \eqref{Llogalpha}. We obtain 
\begin{align*}
	I_{1,1}:=\iint_{D_t}\varphi(\underline{u})|\underline{L}\tilde{\Lambda}|^2|L(\ln \alpha)|\lesssim \int_{\mathbb{R}}\frac{K_1\gamma}{\varphi^{3/4}(u)}\underbrace{\left[\int_{C_{t,u}}\varphi(\underline{u})|\underline{L}\tilde{\Lambda}|^2ds\right]}_{\lesssim \mathcal{F}(t)}du\lesssim K_1 \gamma \varepsilon^2.
\end{align*}
For the second integral in \eqref{int1} consider 
\begin{equation}\label{K}
K:=\max\{K_1,K_2\}.
\end{equation} 
Using again \eqref{eqns:alpha}, \eqref{eqn:alpha0}-\eqref{eqn:alpha1} and \eqref{Llogalpha} and \eqref{lema1} we have
\begin{align*}
	I_{1,2}:=& \iint_{D_t}\varphi(\underline{u})|\underline{L}\tilde{\Lambda}||L\tilde{\Lambda}||\underline{L}(\ln \alpha)|\lesssim \left(\iint_{D_t}\varphi(\underline{u})|\underline{L}\tilde{\Lambda}|^2|L\tilde{\Lambda}| \right)^{1/2}\left(\iint_{D_t}\varphi(\underline{u})\frac{K^2\gamma^2}{\varphi^{3/2}(\underline{u})}|L\tilde{\Lambda}| \right)^{1/2}\\
	&\lesssim {\color{black}\left(\int_{\mathbb{R}} \frac{C_2\varepsilon}{\varphi^{1/2}(u)}\int_{C_{t,u}}\varphi(\underline{u})|\underline{L}\tilde{\Lambda}|^2du\right)^{1/2}\left(\int_{\mathbb{R}}\frac{C_2\varepsilon}{\varphi^{1/2}(u)}\int_{C_{t,u}}\frac{4K^2\gamma^2}{\varphi^{1/2}(\underline{u})}du\right)^{1/2}}\\
	&\lesssim K (C_2 \varepsilon^{3})^{1/2}(C_2\gamma^2\varepsilon)^{1/2}=K C_2\gamma\varepsilon^2.
\end{align*}
For the integral $I_2$ in \eqref{defs_Ij}, we have from \eqref{importante} that
\begin{equation*}
\begin{aligned}
	I_2= &~{} \iint_{D_t}\varphi(\underline{u})|\underline{L}\tilde{\Lambda}||Q_0(\partial_x(\ln \alpha), \tilde{\Lambda})+Q_0(\ln \alpha, \partial_x \tilde{\Lambda})| \\
	\leq &~{}  \iint_{D_t}\varphi(\underline{u})|\underline{L}\tilde{\Lambda}|^2|L(\partial_x(\ln \alpha)| +\iint_{D_t} \varphi(\underline{u})|\underline{L}\tilde{\Lambda}||L\tilde{\Lambda}||\underline{L}\partial_x(\ln \alpha)| \\
	&~{} + \iint_{D_t} \varphi(\underline{u})|\underline{L}\tilde{\Lambda}||\underline{L}\partial_x\tilde{\Lambda}||L(\ln \alpha)| +\iint_{D_t}\varphi(\underline{u})|\underline{L}\tilde{\Lambda}||\underline{L}(\ln\alpha)||L\partial_x(\tilde{\Lambda})| \\
	=: &~{} I_{2,11} +I_{2,12}+I_{2,21}+I_{2,22}.
\end{aligned}	
\end{equation*}
Using \eqref{Llogalpha} and similar computations to the previous ones, we get
\begin{align*}
	I_{2,11}\lesssim K_1\gamma \varepsilon^2.
\end{align*}
Next, using Cauchy-Schwarz,
\begin{align*}
	I_{2,12}\lesssim & \left(\iint_{D_t}\varphi(\underline{u})|\underline{L}\tilde{\Lambda}|^2|L \tilde{\Lambda}| \right)^{1/2}\left(\iint_{D_t}\varphi(\underline{u})|L\tilde{\Lambda}||\underline{L} \partial_x(\ln\alpha)|^2 \right)^{1/2}\\
	\lesssim & \left( \int_{\mathbb{R}}\frac{C_2\varepsilon}{\varphi^{1/2}(u)}\underbrace{\left[\int_{C_{t,u}}\varphi(\underline{u})|\underline{L}\tilde{\Lambda}|^{2}ds \right]}_{\lesssim \mathcal{F}(t)}du\right)^{1/2} \left( \int_{\mathbb{R}}\frac{C_2\varepsilon}{\varphi^{1/2}(u)}\left[\int_{C_{t,u}}\frac{K^2\gamma^2}{\varphi^{1/2}(\underline{u})}ds \right]du\right)^{1/2}\\
	\lesssim &~{} C_2K\gamma \varepsilon^2.
\end{align*}
Using the same analysis as before,
\begin{align*}
	I_{2,21} \lesssim & \left(\iint_{D_t}\varphi(\underline{u})|\underline{L}\tilde{\Lambda}|^2|L(\ln \alpha)| \right)^{1/2}\left(\iint_{D_t}\varphi(\underline{u})|\underline{L}\tilde{\Lambda}|^2|L (\ln\alpha)| \right)^{1/2}\\
	\lesssim & \left(\int_{\mathbb{R}}\frac{K\gamma}{\varphi^{3/4}(u)}\left[\int_{C_{t,u}}\varphi(\underline{u})|\underline{L}\tilde{\Lambda}|^{2}ds \right]du\right)^{1/2}\left(\int_{\mathbb{R}}\frac{K\gamma}{\varphi^{3/4}(u)}\left[\int_{C_{t,u}}\varphi(\underline{u})|\underline{L}\partial_x\tilde{\Lambda}|^{2}ds \right]du\right)^{1/2}\\
	\lesssim&~{}  C_2K\gamma\varepsilon^2.
\end{align*}
The last estimate involves Cauchy-Schwarz to obtain
\begin{align*}
	I_{2,22}\lesssim &~{} \left( \iint_{Dt}\frac{\varphi(\underline{u})}{\varphi(u)}|\underline{L}\tilde{\Lambda}|^2\right)^{1/2} \left( \iint_{D_t}\varphi(\underline{u})|\underline{L}(\ln \alpha)|^2|L\partial_x\tilde{\Lambda}|^2\varphi(u)\right)^{1/2}\\
	\lesssim & ~{}  \varepsilon \left(  \int_{\mathbb{R}}\frac{K^2\gamma^2}{\varphi^{3/2}(\underline{u})}\left[ \int_{C_{t,\underline{u}}}\varphi(u)|L\partial_x \tilde{\Lambda}|\right]d\underline{u}\right)^{1/2}\lesssim K\gamma \varepsilon^2.
\end{align*}
The remaining integrals are analogous and we have for both expressions that :
\begin{align*}
	&I_{3}:=\iint_{D_t}\varphi(u)|L\tilde{\Lambda}||Q_0(\ln \alpha,\tilde{\Lambda})|\lesssim \gamma \varepsilon^2,\\
	& I_4:= \iint_{D_t}\varphi(u)|L\tilde{\Lambda}||Q_0(\partial_x(\ln \alpha), \tilde{\Lambda})+Q_0(\ln \alpha, \partial_x \tilde{\Lambda})|\lesssim \gamma \varepsilon^2.
\end{align*}
For the other term, which corresponds to $\rho_2$ in \eqref{derivadas}, the analysis is the same as described in our recently completed work  \cite{JT}. See this reference for full details.

\medskip

Finally, from  the energy estimate (\refeq{EEnergia}), we can arrange all the previous estimates together, and for universal constants $C_4, C_5,K$ with $K_1,K_2 \leq K,$ (see \eqref{K}), one has for all $t\in [0, T^{*}]$:
\begin{equation}\label{cota_casi_final}
	\mathcal{E}(t)+\mathcal{F}(t) \leq  (2C_0C_1+K\gamma)\varepsilon^2 + C_4 \varepsilon^3 + C_5 \varepsilon^4,
\end{equation}
where $C_1$ is given in \eqref{def_C1}. Now, if we take $\varepsilon_0 $ such that
\begin{equation}\label{condicionfinal}
	\varepsilon_0 \leq \dfrac{C_0 C_1}{C_4}, \quad  \varepsilon_0^2 \leq \dfrac{C_0 C_1}{C_5},
\end{equation}
and $\gamma$ such that 
\[    K\gamma < \frac{C_0 C_1}{2},
\]
we can see that for all $0 < \varepsilon \leq \varepsilon_0$ and for all $t\in [0, T]$, we have
\begin{equation*}
	\mathcal{E}(t)+\mathcal{F}(t) \leq \frac{9}{2}C_0C_1 \varepsilon^2.
\end{equation*}
By taking a suitable $\gamma$ and $\varepsilon_0$, we have the desired control. This improves the constant in (\refeq{supuesto}).

\medskip

To improve condition (\refeq{condicionlambda}),  using the Fundamental Theorem of Calculus, \eqref{L Lbar} and Lemma \refeq{lema1}, one can write $\tilde{\Lambda}(t,x)$, $t\geq 0$, in the following form:
\begin{equation}\label{tLambda}
\begin{aligned}
	\abs{\tilde{\Lambda}(t,x)}&\leq \varepsilon \abs{\tilde{\Lambda}_0(x)} +\int_{0}^t \abs{\partial_{\tau}\tilde{\Lambda}(\tau,x)}d\tau \\
	&\leq \varepsilon M_1 +\dfrac{1}{2}\int_0^t \abs{L\tilde{\Lambda}+\underline{L}\tilde{\Lambda}}d\tau\\
	& \leq \varepsilon M_1+ \dfrac{1}{2}\int_0^t \left(\dfrac{C_2 \varepsilon}{\varphi(u)^{1/2}} +\dfrac{C_2 \varepsilon}{\varphi(\underline{u})^{1/2}}\right)d\tau\\
	& \leq \varepsilon M_1 + \varepsilon C_2M_2\leq M \varepsilon,
\end{aligned}
\end{equation}
for some universal constant $M.$ Next, we take $\varepsilon_0 >0$ that satisfies the condition $(\refeq{condicionfinal})$ and such that
\begin{equation}\label{condfinal2}
	M\varepsilon_0 < \dfrac{\lambda}{4},
\end{equation}
taking $\sup$ over $t\in [0,T^{*}]$, we conclude that for all $0 < \varepsilon \leq \varepsilon_0$ we have improved via \eqref{condfinal2} the key estimate (\refeq{condicionlambda}). 

\medskip

The above estimates prove that the solution $\tilde{\Lambda}$ is global. A similar argument, as established before, shows that $\phi$ is also globally defined. This ends the existence proof in Theorem \ref{thm_aux}.

\subsection{End of proof of Theorem \ref{GLOBAL0}} Since $\alpha$, $\tilde \Lambda$ and $\phi$ have been completely determined in previous steps, we only need to determine the behavior of the {\color{blue} function $f$} in \eqref{sistema1}. Note that $\alpha$ satisfies \eqref{eqn:alpha_DA} and in the system \eqref{sistema1} we have that $\ln f$ satisfies the nonhomogeneous wave equation with {\color{blue}initial} conditions \eqref{Ciniciales}, then, we can use  d'Alembert's solution {\color{blue}to describe} the function $f$, but {\color{blue} before}, let us analyze the following result:

\begin{lem} Let $G$ be defined as in \eqref{sistema2}. Under the hypotheses of Theorem \ref{GLOBAL0}, and {\color{blue}as a consequence} of Theorem \ref{thm_aux}, the following is satisfied:
	\begin{itemize}
		\item For each $t \in \mathbb{R}$, $G(t,\cdot) \in (L^1\cap L^\infty)(\mathbb R)$;
		\item There exists $C>0$ such that $\sup_{t\geq 0} \|G(t)\|_{L^1\cap L^\infty} \leq C.$
       \end{itemize}
\end{lem}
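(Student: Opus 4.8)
The plan is to rewrite $G$ entirely in the null frame $(L,\underline L)$ of \eqref{L Lbar} and then reduce every term to a product of a factor decaying in $u$ and a factor decaying in $\underline u$. The basic algebraic identity is that for any scalar $a$ one has $(\partial_t a)^2-(\partial_x a)^2=(La)(\underline L a)$, which converts each quadratic difference appearing in \eqref{sistema2} into such a product. The decisive simplification is in the first term: expanding
\begin{equation*}
\partial_t^2(\ln\alpha)-\partial_x^2(\ln\alpha)=\frac{\partial_t^2\alpha-\partial_x^2\alpha}{\alpha}-\frac{(\partial_t\alpha)^2-(\partial_x\alpha)^2}{\alpha^2},
\end{equation*}
and using that $\alpha$ solves the $1$D wave equation \eqref{onda}, the first quotient vanishes, so this term equals $(\partial_t\alpha)^2-(\partial_x\alpha)^2$ divided by $\alpha^2$. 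Combining it with the second term of \eqref{sistema2}, and using $L\Lambda=L\tilde\Lambda$, $\underline L\Lambda=\underline L\tilde\Lambda$, I obtain the clean expression
\begin{equation*}
G=\frac{1}{2\alpha^2}(L\alpha)(\underline L\alpha)-\frac12(L\tilde\Lambda)(\underline L\tilde\Lambda)-2\sinh^2\Lambda\,(L\phi)(\underline L\phi).
\end{equation*}

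Next I would record the coefficient bounds. From the D'Alembert formula \eqref{eqn:alpha_DA}, the smallness of $\tilde\alpha_0$ and the decay bound \eqref{eqn:alpha1} (which forces $\|\alpha_1\|_{L^1}=O(\gamma)$ since $3(1+\delta)/4>1/2$), one gets $|\alpha-1|=O(\gamma)$ uniformly in $(t,x)$, so that for $\gamma$ small $\alpha$ is uniformly bounded above and below; in particular $\alpha^{-2}$ is bounded. Likewise \eqref{condicionlambda} gives $|\tilde\Lambda|\le\lambda/2$, hence $\Lambda\in[\lambda/2,3\lambda/2]$ and $\sinh^2\Lambda$ is bounded. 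For the null derivatives I use the pointwise estimates already available: \eqref{Llogalpha}, \eqref{eqns:alpha} and \eqref{eqn:alpha0}--\eqref{eqn:alpha1} give $|L\alpha|\lesssim \gamma/\varphi^{3/4}(u)$ and $|\underline L\alpha|\lesssim \gamma/\varphi^{3/4}(\underline u)$, while Lemma \ref{lema1} gives $|L\tilde\Lambda|,|L\phi|\lesssim \varepsilon(1+|u|^2)^{-1/2-\delta/2}$ and $|\underline L\tilde\Lambda|,|\underline L\phi|\lesssim \varepsilon(1+|\underline u|^2)^{-1/2-\delta/2}$.

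Third, each of the three summands of $G$ is then pointwise dominated by $C(\varepsilon^2+\gamma^2)\,w_a(u)\,w_a(\underline u)$, where $w_a(s)=(1+|s|^2)^{-a/2}$ with $a=1+\delta$ for the $\tilde\Lambda$ and $\phi$ terms and $a=3(1+\delta)/2$ for the $\alpha$ term; in every case $a>1$. The $L^\infty$ bound is immediate since $w_a\le1$. For the $L^1$ bound, fixing $t$ and changing variables $x\mapsto u=(t+x)/2$ (so that $\underline u=t-u$ and $dx=2\,du$) turns $\|G(t)\|_{L^1}$ into a sum of convolution integrals $\int_{\mathbb R} w_a(u)\,w_a(t-u)\,du$. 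By Young's inequality $\|w_a*w_a\|_{L^\infty}\le\|w_a\|_{L^2}^2$, and $w_a\in L^2(\mathbb R)$ because $2a>1$; this bound is independent of $t$. Summing the three contributions yields $\sup_{t\ge0}\|G(t)\|_{L^1\cap L^\infty}\le C(\varepsilon^2+\gamma^2)$, proving both assertions, the membership for each fixed $t$ (including $t<0$, by the same estimates applied to the backward evolution) following from the uniform bound.

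The step I expect to be the main obstacle is the first one: getting the algebraic reduction exactly right, in particular exploiting the wave equation for $\alpha$ to cancel the second-order term $\partial_t^2\alpha-\partial_x^2\alpha$, so that the apparently worst term $\partial_t^2(\ln\alpha)-\partial_x^2(\ln\alpha)$ collapses to a product of first-order null derivatives carrying the decay \eqref{eqn:alpha0}--\eqref{eqn:alpha1}. Everything afterwards is routine bookkeeping of the null-frame decay rates together with the convolution estimate; the only point requiring genuine care is the uniform lower bound on $\alpha$, which must hold for all $(t,x)$ and not merely near $t=0$.
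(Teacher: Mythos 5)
Your proof is correct and follows essentially the same route as the paper: the key step in both is that the wave equation \eqref{onda} cancels the second-order part of $\partial_t^2(\ln\alpha)-\partial_x^2(\ln\alpha)$, reducing $G$ to products of null derivatives $(L\alpha)(\underline L\alpha)$, $(L\tilde\Lambda)(\underline L\tilde\Lambda)$, $(L\phi)(\underline L\phi)$, which are then controlled by the explicit D'Alembert bounds \eqref{eqn:alpha0}--\eqref{eqn:alpha1} and Lemma \ref{lema1}. The only (immaterial) difference is that you keep the product structure and conclude the $L^1$ bound by a Young convolution estimate, whereas the paper majorizes the $\Lambda,\phi$ part by $h_1$ and uses the sum bound $\varepsilon^2/\varphi(u)+\varepsilon^2/\varphi(\underline u)$ of Lemma \ref{EdCo}.
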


\begin{proof}
	Since $G$ is given by \eqref{sistema2}, one has
	\[
	\begin{aligned}
		& ~{} G := - \left( \partial_{t}^2(\ln \alpha)-\partial_{x}^2(\ln \alpha) \right)- \dfrac{1}{2\alpha^2} ((\partial_t\alpha)^2-(\partial_x\alpha)^2) \\
		&~{} \qquad  - \dfrac{1}{2}((\partial_t\Lambda)^2-(\partial_x\Lambda)^2)- 2\sinh^2 \Lambda((\partial_t\phi)^2-(\partial_x\phi)^2).%= 0.
	\end{aligned}
	\]
	From \eqref{eqns:alpha} and \eqref{eqn:alpha0}, we can simplify 
	\[
	\begin{aligned}
		&~{}G=	\dfrac{1}{2\alpha^2}(\tilde{\alpha}'_0(2u)+\alpha_1(2u))(\alpha_1(-2\underline{u})-\tilde{\alpha}'_0(-2\underline{u}))\\
		&~{} \qquad  - \dfrac{1}{2}((\partial_t\Lambda)^2-(\partial_x\Lambda)^2)- 2\sinh^2 \Lambda((\partial_t\phi)^2-(\partial_x\phi)^2) =: G_1 + G_2.
	\end{aligned}
	\]
	It can be seen that the regularity of the term $G$ depends on the initial conditions for the function $\alpha$, and on the functions $\Lambda, \phi$. The hypotheses in Theorem \ref{GLOBAL0} ensure that, for all $t\in \mathbb{R}$,  
	\[
	G_1 = \dfrac{1}{2\alpha^2}(\tilde{\alpha}'_0(2u)+\alpha_1(2u))(\alpha_1(-2\underline{u})-\tilde{\alpha}'_0(-2\underline{u})) \in S(\mathbb R).
	\]
Moreover, $\sup_{t\in\mathbb R}\|G_1(t)\|_{L^1 \cap L^\infty} \leq C.$ On the other hand, $G_2$ satisfies from \eqref{h1_0}
\[
\left| (\partial_t\Lambda)^2-(\partial_x\Lambda)^2- 2\sinh^2 \Lambda((\partial_t\phi)^2-(\partial_x\phi)^2 )\right| (t,x) \leq h_1(t,x).
\]
Now we use the following result to conclude:
\begin{lem}\label{EdCo}
\[
\left| h_1(t,x)\right| \lesssim \frac{\varepsilon^2}{\varphi(u)} +\frac{\varepsilon^2}{\varphi(\underline{u})}.
\]
\end{lem}

Assuming this result, we finally get $G(t,\cdot)\in L^1(\mathbb{R}) \cap C(\mathbb{R})$ with uniform bounds in time. 
\end{proof}

\begin{proof}[Proof of Lemma \ref{EdCo}]
First of all, we have (see \eqref{L Lbar}, Lemma \eqref{lema1} and \eqref{varphi})
\[
|\partial_t \Lambda |\lesssim \abs{L\tilde{\Lambda}}+\abs{\underline{L}\tilde{\Lambda}}  \leq  \dfrac{\varepsilon}{\varphi(u)^{1/2}} +\dfrac{ \varepsilon}{\varphi(\underline{u})^{1/2}} .
\]
Similarly, 
\[
|\partial_x\Lambda | +|\partial_x\phi | +|\partial_x\phi | \lesssim  \dfrac{\varepsilon}{\varphi(u)^{1/2}} +\dfrac{ \varepsilon}{\varphi(\underline{u})^{1/2}} .
\]
Finally, thanks to \eqref{tLambda},
\[
\sinh^2(\Lambda) \lesssim \sinh^2(\lambda).
\]
Gathering these results, we conclude. 
\end{proof}

The previous result allows us to describe the function $f$ using d'Alembert formula for the nonhomogeneous linear wave and \eqref{Ciniciales}. Consider the initial data problem  for $v(t,x):=\ln f(t,x)$ given by
\begin{equation}
\begin{cases}
\partial_t^2 v-\partial_x^2 v = G(t,x)\\
v(0,x)= \ln (f(0,x))=\ln (f_0+c_1)\\
\partial_t v(t,x)|_{\{t=0\}}= \frac{f_1}{c_1+f_0}.
\end{cases}
\end{equation}
We get
\begin{equation}\label{sol_ln_f}
	\begin{aligned}
		v(t,x):=&\frac{1}{2}\left[\ln (c_1+f_0(x+t))+\ln(c_1+f_0(x-t))\right]\\
		& +\frac{1}{2}\int_{x-t}^{x+t}\frac{f_1(s)ds}{c_1+f_0(s)}  +\frac{1}{2}\int_{0}^t \left[ \int_{x+s-t}^{x+t-s} G(s,y)dy\right]ds =: v_1+v_2+v_3 .
	\end{aligned}
\end{equation}
It is clear that $v_1$ and $v_2$ are globally defined, bounded in time and space members. On the other hand, thanks to \eqref{EdCo},
\[
\begin{aligned}
\left| \int_{x+s-t}^{x+t-s} G(s,y)dy \right| \lesssim &~{} \varepsilon^2  \int_{x+s-t}^{x+t-s} \left( \frac{1}{\varphi(s+y)} +\frac{1}{\varphi(s-y) }   \right) dy \\
\lesssim &~{}\varepsilon^2  \int_{x+2s-t}^{x+t} \frac{dy}{\varphi(y)}  +\varepsilon^2  \int_{x-t}^{x+t-2s} \frac{ dy}{\varphi(y) } \lesssim \frac{\varepsilon^2(t-s)}{\varphi(x+t)}+ \frac{\varepsilon^2(t-s)}{\varphi(x-t)}.
\end{aligned}
\]
Consequently,
\[
|v_3| \lesssim \int_0^t  \left( \frac{\varepsilon^2(t-s)}{\varphi(x+t)}+ \frac{\varepsilon^2(t-s)}{\varphi(x-t)} \right)ds  \lesssim  t^2\left( \frac{\varepsilon^2}{\varphi(x+t)}+ \frac{\varepsilon^2}{\varphi(x-t)} \right). 
\]
 Now we conclude the proof of the theorem. From \eqref{sol_ln_f} the function $f$ is given by 
\begin{align*}
	f(t,x)=\rho(t,x) \exp\left(\frac{1}{2}\int_{x-t}^{x+t}\frac{f_1(s)ds}{c_1+f_0(s)} +\frac{1}{2}\int_{0}^t \left[ \int_{x+s-t}^{x+t-s} G(s,y)dy\right]ds \right),
\end{align*}
with 
\[
\rho(t,x)= \sqrt{(c_1+f_0(x+t))(c_1+f_0(x-t))}.
\]
Notice that $f$ is strictly positive everywhere in time and space. Given the initial conditions imposed on the function $f$, the integrals are well-defined. Additionally, the function $f$ is positive, {\color{blue} consistently with the BZ proposal}. 

\section{Energy-momentum formulation}\label{Sect:4}

The aim of this section is first to introduce a correct definition of energy and momentum densities for one type of solutions of the Einstein equations in vacuum, and then to give a proper description of the decay of these solutions in the framework of the global existence theory presented in the previous section.

\medskip

The notion of energy and the law of conservation of energy play a key role in all mathematical-physical theories. The definition of energy in relativity is a complex matter, and this problem has been given a lot of attention in the literature \cite{wald2010general,wald2000}. For this and other reasons, it is very interesting to study and define what could be considered a good definition of ``energy''. However, the most likely candidate for the energy density of the gravitational field in general relativity would be a quadratic expression in the first derivatives of the components of the metric \cite{wald2010general}, or as in this case, in terms of the fields defining the components of the metric. For the particular case of spacetimes admitting two commutative Killing vectors, the energy formulation is constrained by the function $\alpha(t,x)$, which we recall, in this setting, is a positive solution of the one-dimensional wave equation. 

\medskip

What we must keep in mind is that these spacetimes can be used to describe both cylindrical gravitational waves and inhomogeneous cosmological models in vacuum, but the former are less suited to study decay properties, for the reasons exposed below. Roughly speaking, for gravitational cylindrical wave solutions, the gradient of $\alpha(t,x)$ must be spacelike, while for the description of cosmological models, it must be timelike. 

\medskip

In this section, we propose an adequate description of the energy and momentum densities, according to the type of spacetime being analyzed, i.e., subject to the sign of the gradient of the function $\alpha$. 

\subsection{Energy-Momentum formalism}  We begin by proposing an initial definition for energy and momentum densities of the system \eqref{sistema1}. In the spirit of the definition proposed by Hadad in \cite[p.73]{yaronhadad_2013}, we will expose this new description for these densities in the suitable terms of the field $\Lambda, \phi, \alpha$, and study whether or not it is a conserved quantity and to find local conservation laws.
\medskip

Recall \eqref{EnergyF}. In terms of the fields $\Lambda, \phi$ and the function $\alpha(t,x)$,  let us the introduce the following densities:
\begin{equation}\label{eq:energiaN}
	\begin{aligned}
		e(t,x):=&~{}\kappa \partial_t\alpha \left[\dfrac{(\partial_x \alpha)^2+(\partial_t \alpha)^2}{\alpha^2}+ 4\sinh^2(\Lambda)\Big((\partial_t \phi)^2+(\partial_x\phi)^2\Big)+(\partial_t\Lambda)^2+(\partial_x\Lambda)^2 \right]\\
		&~{}\quad  -2\kappa \partial_x\alpha\left(\dfrac{\partial_x\alpha \partial_t \alpha}{\alpha^2}+\partial_x\Lambda \partial_t\Lambda +4\partial_x \phi \partial_t \phi     \sinh^2(\Lambda) \right),
\end{aligned}
\end{equation}
where 
\begin{equation}\label{gamma}
	\kappa(t,x)=\dfrac{\alpha }{(\partial_x\alpha)^2-(\partial_t\alpha)^2},
\end{equation}
and
\begin{equation}
\begin{aligned}\label{eq:momentumN}
p(t,x):=&~{}\kappa \partial_x\alpha \left[\dfrac{(\partial_x \alpha)^2+(\partial_t \alpha)^2}{\alpha^2}+ 4\sinh^2(\Lambda)\Big((\partial_t \phi)^2+(\partial_x\phi)^2\Big)+(\partial_t\Lambda)^2+(\partial_x\Lambda)^2 \right]\\
		&~{}\quad  -2\kappa \partial_t\alpha\left(\dfrac{\partial_x\alpha \partial_t \alpha}{\alpha^2}+\partial_x\Lambda \partial_t\Lambda +4\partial_x \phi \partial_t \phi     \sinh^2(\Lambda) \right).
	\end{aligned}
\end{equation}
{\color{black} It should be noted that, in providing these densities, certain constraints, regarding the region in which $(\partial_x\alpha)^2-(\partial_t\alpha)^2$ is null, must be considered. These considerations will be studied in more detail in the following section}. Now, in order to have a suitable definition of these densities, we propose the following redefinition:
\begin{equation}\label{eq:energiaN_new}
	\begin{aligned}
		\tilde e=\tilde e[\Lambda,\phi,\alpha]:=&~{}\kappa \partial_t\alpha \left[ (\partial_t\Lambda)^2+(\partial_x\Lambda)^2+4\sinh^2(\Lambda)\Big((\partial_t \phi)^2+(\partial_x\phi)^2\Big) \right]\\
		&~{}\quad  -2\kappa \partial_x\alpha\left(\partial_x\Lambda \partial_t\Lambda +4\partial_x \phi \partial_t \phi     \sinh^2(\Lambda) \right),
\end{aligned}
\end{equation}
and
\begin{equation}
\begin{aligned}\label{eq:momentumN_new}
\tilde p= \tilde p[\Lambda,\phi,\alpha]:=&~{}\kappa \partial_x\alpha \left[ (\partial_t\Lambda)^2+(\partial_x\Lambda)^2 +4\sinh^2(\Lambda)\Big((\partial_t \phi)^2+(\partial_x\phi)^2\Big) \right]\\
		&~{}\quad  -2\kappa \partial_t\alpha\left(\partial_x\Lambda \partial_t\Lambda +4\partial_x \phi \partial_t \phi     \sinh^2(\Lambda) \right).	
	\end{aligned}
\end{equation}
For the densities $e,  p$, we can state the following identities
\begin{lem}\label{eqContinuidad0}
	Let $(\Lambda, \phi,\alpha)$ be a solution to \eqref{sistema1}. Let $ e(t,x)$ and $p(t,x)$ be as introduced in \eqref{eq:energiaN}-\eqref{eq:momentumN}. Assume that $(t,x)$ lies in an open region of spacetime such that $(\partial_x \alpha)^2-( \partial_t \alpha)^2\neq 0$. Then one has
	\begin{equation}\label{eqContinuidad}
		\begin{aligned}
		&\partial_t p(t,x)+\partial_x e(t,x)=0,\\
			&{\color{black}\partial_t e(t,x)+\partial_x p(t,x)=4\sinh^2(\Lambda)\left(\phi_t^2-\phi_x^2\right)+\Lambda_t^2-\Lambda_x^2
 +\partial_x\left(\dfrac{\alpha_x}{\alpha}\right) -\partial_t\left(\dfrac{\alpha_t}{\alpha}\right)
}.
		\end{aligned}
	\end{equation}
\end{lem}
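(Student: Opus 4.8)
The plan is to prove the two conservation-type identities in \eqref{eqContinuidad} by direct computation, substituting the equations of motion \eqref{sistema1} at the crucial steps. Since both $e$ and $p$ in \eqref{eq:energiaN}-\eqref{eq:momentumN} are built from a ``matter'' part (the $\Lambda,\phi$ terms) and an ``$\alpha$'' part (the $(\partial_x\alpha)^2+(\partial_t\alpha)^2$ and $\partial_x\alpha\,\partial_t\alpha$ terms), I would organize the computation by treating these pieces somewhat separately, keeping in mind that they are coupled through the common prefactor $\kappa=\alpha/((\partial_x\alpha)^2-(\partial_t\alpha)^2)$. The hypothesis $(\partial_x\alpha)^2-(\partial_t\alpha)^2\neq 0$ guarantees $\kappa$ is smooth in the region considered, so all derivatives below are legitimate.

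First I would establish the first identity $\partial_t p+\partial_x e=0$. The key structural observation is that $e$ and $p$ have exactly the same bracketed factors, with the roles of $\partial_t\alpha$ and $\partial_x\alpha$ interchanged between the ``diagonal'' term (multiplying the sum of squares) and the ``cross'' term. I would expand $\partial_t p+\partial_x e$ using the product rule and then group terms. The plan is to use the second equation of \eqref{sistema1}, namely $\partial_t(\alpha\sinh^2\Lambda\,\partial_t\phi)-\partial_x(\alpha\sinh^2\Lambda\,\partial_x\phi)=0$, together with the first equation governing $\Lambda$, and the wave equation $\partial_t^2\alpha-\partial_x^2\alpha=0$, to show that all the terms involving second derivatives and the coupling cancel. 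I expect a clean cancellation here precisely because this is a momentum–energy continuity equation for a system that, at the level of the $\Lambda,\phi$ dynamics, is a wave map-type structure; the $\alpha$-dependent weight $\kappa$ is engineered so that $\partial_t\kappa$ and $\partial_x\kappa$ combine favorably.

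Next I would turn to the second identity. Here the right-hand side is nonzero, which reflects that $\tilde e$ is a \emph{nonconserved} energy — consistent with the fact that \eqref{EnergyF} is explicitly stated to be nonconserved. I would compute $\partial_t e+\partial_x p$ the same way, expanding and substituting \eqref{sistema1}. The terms that do not cancel should reassemble into the source $4\sinh^2(\Lambda)(\phi_t^2-\phi_x^2)+\Lambda_t^2-\Lambda_x^2+\partial_x(\alpha_x/\alpha)-\partial_t(\alpha_t/\alpha)$. The appearance of $\partial_x(\alpha_x/\alpha)-\partial_t(\alpha_t/\alpha)$ on the right strongly suggests that the $\alpha$-part of $e,p$ contributes exactly this Laplacian-of-$\ln\alpha$-type residue, so I would verify that the purely-$\alpha$ terms produce precisely $\partial_x(\alpha_x/\alpha)-\partial_t(\alpha_t/\alpha)$ after using $\partial_t^2\alpha=\partial_x^2\alpha$, while the matter terms produce $\Lambda_t^2-\Lambda_x^2+4\sinh^2\Lambda(\phi_t^2-\phi_x^2)$ after using the $\Lambda$ and $\phi$ equations.

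The main obstacle will be the bookkeeping in the $\kappa$-prefactor: because $\kappa$ depends on $\alpha$ and its first derivatives, differentiating it produces second derivatives of $\alpha$ (via $\partial_t\partial_x\alpha$ and $\partial_x^2\alpha=\partial_t^2\alpha$) that must be tracked carefully and matched against the second-derivative terms coming from the bracketed quantities. The delicate point is verifying that the denominator $(\partial_x\alpha)^2-(\partial_t\alpha)^2$ in $\kappa$, when differentiated, combines with the numerator factors so that the genuinely singular-looking contributions cancel and only the stated source survives. I would therefore carry out the $\alpha$-only computation first in isolation (setting $\Lambda,\phi$ aside) to confirm the residue $\partial_x(\alpha_x/\alpha)-\partial_t(\alpha_t/\alpha)$, and then add the matter terms, whose cancellations follow from the wave-map structure of the $(\Lambda,\phi)$ equations. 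Once both pieces are verified, the two identities of \eqref{eqContinuidad} follow by collecting terms.
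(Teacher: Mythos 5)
Your proposal is correct and follows essentially the same route as the paper's Appendix~\ref{App:B}: a direct expansion of $\partial_t p+\partial_x e$ and $\partial_t e+\partial_x p$, grouped into pure-$\alpha$ terms and matter ($\Lambda,\phi$) terms, with the equations of motion \eqref{sistema1} substituted to produce the cancellations, the pure-$\alpha$ block yielding $0$ in the first identity and $\partial_x(\alpha_x/\alpha)-\partial_t(\alpha_t/\alpha)$ in the second, and the matter block yielding $0$ and $\Lambda_t^2-\Lambda_x^2+4\sinh^2(\Lambda)(\phi_t^2-\phi_x^2)$ respectively. The only caveat is that your text is a plan rather than the computation itself; the actual verification is a lengthy but mechanical bookkeeping exercise identical in structure to what the paper carries out.
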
 

Equations \eqref{eqContinuidad} are a modified version of the continuity equations for the energy and momentum densities. A perfectly behaved relation was found in \cite{JT} in the case of the integrable Principal Chiral model. The situation here is more subtle, and there is no sign of a perfectly behaved continuity equation, {\color{blue}mainly} because of the functions $\alpha$ and $f$.

Part of the proof of the first equation is essentially contained in Hadad \cite{yaronhadad_2013}, but the technical details, as well as the proof of the second equation, are included in Appendix \ref{App:B}. As a corollary, we also have the following identities for the redefined densities $\tilde e(t,x), $ and $\tilde p(t,x)$:

\begin{cor}\label{eqContinuidad_new} Let $\tilde e(t,x)$ and $\tilde p(t,x)$ be as introduced in \eqref{eq:energiaN_new}-\eqref{eq:momentumN_new}. Under the assumptions of Lemma \ref{eqContinuidad0}, one has
	\begin{equation*}
		\begin{aligned}%\label{eq:Continuidad_new}
		&\partial_t \tilde p(t,x)+\partial_x \tilde e(t,x)=0,\\
			&{\color{black}\partial_t \tilde e(t,x)+\partial_x \tilde p(t,x)=4\sinh^2(\Lambda)\left(\phi_t^2-\phi_x^2\right)+\Lambda_t^2-\Lambda_x^2.
}
		\end{aligned}
	\end{equation*}
\end{cor}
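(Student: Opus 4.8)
The plan is to reduce the corollary directly to Lemma~\ref{eqContinuidad0} by isolating the purely $\alpha$-dependent contributions. Comparing \eqref{eq:energiaN}--\eqref{eq:momentumN} with \eqref{eq:energiaN_new}--\eqref{eq:momentumN_new}, one sees that $e=\tilde e + e_\alpha$ and $p=\tilde p + p_\alpha$, where
\begin{equation*}
e_\alpha := \kappa\,\partial_t\alpha\,\frac{(\partial_x\alpha)^2+(\partial_t\alpha)^2}{\alpha^2} - 2\kappa\,\partial_x\alpha\,\frac{\partial_x\alpha\,\partial_t\alpha}{\alpha^2},
\end{equation*}
and $p_\alpha$ is given by the same expression with the roles of $\partial_t\alpha$ and $\partial_x\alpha$ exchanged in the outer factors. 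Thus the two identities to be proved are equivalent, via Lemma~\ref{eqContinuidad0}, to the two auxiliary identities $\partial_t p_\alpha + \partial_x e_\alpha = 0$ and $\partial_t e_\alpha + \partial_x p_\alpha = \partial_x(\alpha_x/\alpha) - \partial_t(\alpha_t/\alpha)$, involving only the function $\alpha$.

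The key computation is an elementary simplification of $e_\alpha$ and $p_\alpha$. Inserting $\kappa = \alpha/((\partial_x\alpha)^2-(\partial_t\alpha)^2)$ and factoring, I expect
\begin{equation*}
e_\alpha = \frac{\partial_t\alpha\,\big[(\partial_x\alpha)^2+(\partial_t\alpha)^2 - 2(\partial_x\alpha)^2\big]}{\alpha\big((\partial_x\alpha)^2-(\partial_t\alpha)^2\big)} = -\frac{\partial_t\alpha}{\alpha} = -\partial_t(\ln\alpha),
\end{equation*}
and symmetrically $p_\alpha = \partial_x\alpha/\alpha = \partial_x(\ln\alpha)$; here the nonvanishing of $(\partial_x\alpha)^2-(\partial_t\alpha)^2$ in the open region (the hypothesis inherited from Lemma~\ref{eqContinuidad0}) is exactly what licenses the cancellation. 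With these closed forms the first auxiliary identity is immediate from equality of mixed partials, namely $\partial_t p_\alpha + \partial_x e_\alpha = \partial_t\partial_x(\ln\alpha) - \partial_x\partial_t(\ln\alpha) = 0$, while for the second one computes $\partial_t e_\alpha + \partial_x p_\alpha = -\partial_t^2(\ln\alpha) + \partial_x^2(\ln\alpha)$, which equals $\partial_x(\alpha_x/\alpha)-\partial_t(\alpha_t/\alpha)$ since $\alpha_x/\alpha = \partial_x(\ln\alpha)$ and $\alpha_t/\alpha = \partial_t(\ln\alpha)$.

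Subtracting these from the identities of Lemma~\ref{eqContinuidad0} then yields the claim: the relation $\partial_t\tilde p + \partial_x\tilde e = 0$ follows at once, and in the second relation the defect term $\partial_x(\alpha_x/\alpha) - \partial_t(\alpha_t/\alpha)$ appearing in \eqref{eqContinuidad} is cancelled exactly by $\partial_t e_\alpha + \partial_x p_\alpha$, leaving $\partial_t\tilde e + \partial_x\tilde p = 4\sinh^2(\Lambda)(\phi_t^2-\phi_x^2) + \Lambda_t^2 - \Lambda_x^2$. There is no serious obstacle: all the analytic content sits in Lemma~\ref{eqContinuidad0}, and the only thing to verify is the reduction of $e_\alpha,p_\alpha$ to $-\partial_t(\ln\alpha)$ and $\partial_x(\ln\alpha)$, which is precisely what makes the spurious $\alpha$-terms drop out. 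I would also remark that no use of the wave equation $\partial_t^2\alpha = \partial_x^2\alpha$ is required at this stage: these cancellations are purely algebraic and hold for any smooth positive $\alpha$ with $(\partial_x\alpha)^2\neq(\partial_t\alpha)^2$.
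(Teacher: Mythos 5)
Your proposal is correct and follows essentially the same route as the paper, which simply subtracts the pure-$\alpha$ contributions from the identities of Lemma \ref{eqContinuidad0}; your explicit closed forms $e_\alpha=-\partial_t(\ln\alpha)$ and $p_\alpha=\partial_x(\ln\alpha)$ (both easily verified from $\kappa=\alpha/(\alpha_x^2-\alpha_t^2)$) just make transparent why the defect term $\partial_x(\alpha_x/\alpha)-\partial_t(\alpha_t/\alpha)$ cancels exactly. Your observation that no use of the wave equation for $\alpha$ is needed here is also accurate.
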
 

\begin{proof}
The proof follows immediately from the definition and description of the densities obtained in the Lemma \refeq{eqContinuidad}.
\end{proof}

{\color{black}Note the symmetry in the terms defining the densities, however, the derivatives of the function $\alpha(t,x)$ make a significant change in the nature of these densities, (as compared to the Chiral field equation case, where $\alpha(t,x)$ was considered as a constant, see \cite{JT}).  This implies a deeper analysis regarding the correct formulation of energy densities. {\color{black}As mentioned in the introduction, the local behavior of the spacetime is defined by the nature of the function $\alpha(t,x)$. 

\medskip

This function may have a gradient spacelike in all the spacetime (corresponds to spacetimes with cylindrical symmetry), globally null (corresponds to the plane-symmetric waves), or timelike (cosmological type-solutions), see \cite{carmeli1984einstein, belinski2001gravitational, Belinsky, Einstein1937, Jerry2009} for more details. The following sections propose appropriate definitions of the energy and momentum densities associated with each type of solution, i.e., depending on the nature of the gradient of the alpha function.
 
 \subsection{Cosmological-type solutions} \label{CTS} 
 As mentioned before, spacetimes in the Belinski-Zakharov setting can be used to represent inhomogeneous vacuum cosmological models. In these, the universe is assumed to contain gravitational waves propagating in opposite spatial directions, see \cite{belinski2001gravitational, Belinsky}. To describe this class of models, it is appropriate to take the function $\alpha(t,x)$  timelike, i.e., with negative gradient norm.  Let us start with some preliminary definitions and results.

\begin{defn}[Timelike condition]\label{def:TL} Given the function $\alpha(t,x)$, we will say that $\alpha(t,x)$ is timelike, if its gradient satisfies 
 \begin{equation}\label{condicion:timelike}
(\partial_x \alpha)^2-(\partial_t \alpha)^2 < 0, \quad \forall (t,x)\in\mathbb R^2.
\end{equation}
In this case, we will say that our model is of cosmological type.
 \end{defn}
 
Definition \ref{def:TL} is taken from \cite[p. 965]{carmeli1984einstein}. It is relevant to remark that, as expressed in \cite{carmeli1984einstein}, other cosmological type models are of interest, such as Gowdy models. For more details, the reader can consult the aforementioned work and references therein.
 
 Using the same notation as in \eqref{eqn:alpha_DA} for the initial conditions for $\alpha$, as a solution to the wave equation, when $\alpha(t,x)$ is timelike everywhere, we have the following result:
 
 \begin{lem}\label{condicionSigno0}
Assume that $\alpha(t,x)$ is timelike in the whole spacetime. Then
\begin{enumerate}
\item[(i)] one has
\begin{equation}\label{alfax_alfat}
|\partial_x \alpha|< |\partial_t \alpha|.
\end{equation}
\item[(ii)] if additionally, $(t,x)$ is such that 
\begin{equation}\label{dt_alpha_p}
\partial_t \alpha (t,x) >0, 
\end{equation} 
then,  $|\partial_x \alpha| < \partial_t \alpha$ if and only if  the initial data of the function $\alpha$ satisfy
\begin{equation}\label{alfa0 alfa1}
	%\tilde \alpha'_0(-2\underline{u}) < \alpha_1(-2\underline{u}), \quad \mbox{and}  \quad \tilde \alpha'_0(2 u) > -\alpha_1(2u).
	|\tilde \alpha'_0(\cdot)| < \alpha_1(\cdot).
\end{equation} 
\item[(iii)] if additionally 
\begin{equation}\label{alfa pos}
\alpha(t,x) >0 \quad \forall (t,x)\in \mathbb R^2,
\end{equation} 
then the parameter $\kappa$ defined in \eqref{gamma} is well-defined and it is negative, and 
\begin{equation}\label{mk dt alpha}
-\kappa \partial_t\alpha >0.
\end{equation}
\end{enumerate}
 \end{lem}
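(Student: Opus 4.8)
The plan is to treat the three items in the order stated, with part (ii) the only one requiring a genuine computation. For part (i) I would simply unwind the timelike condition \eqref{condicion:timelike}: the inequality $(\partial_x \alpha)^2-(\partial_t \alpha)^2<0$ is literally $(\partial_x\alpha)^2<(\partial_t\alpha)^2$, and taking square roots yields \eqref{alfax_alfat}. No further input is needed.

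The heart of the lemma is part (ii), and the natural tool is the explicit D'Alembert representation of the derivatives of $\alpha$ recorded in \eqref{eqns:alpha}. From those formulas I would form the two null combinations and observe that each collapses to a single-variable expression:
\[
\partial_t\alpha + \partial_x\alpha = \tilde\alpha_0'(2u) + \alpha_1(2u), \qquad \partial_t\alpha - \partial_x\alpha = \alpha_1(-2\underline{u}) - \tilde\alpha_0'(-2\underline{u}).
\]
Since $\partial_t\alpha>0$, the pointwise inequality $|\partial_x\alpha|<\partial_t\alpha$ is equivalent to the pair $\partial_t\alpha+\partial_x\alpha>0$ and $\partial_t\alpha-\partial_x\alpha>0$, which by the displayed identities is in turn equivalent to $\alpha_1(2u)>-\tilde\alpha_0'(2u)$ and $\alpha_1(-2\underline{u})>\tilde\alpha_0'(-2\underline{u})$. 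For the backward implication I would assume $|\tilde\alpha_0'(\cdot)|<\alpha_1(\cdot)$ and read off both inequalities at once, using $\pm\tilde\alpha_0'\le|\tilde\alpha_0'|<\alpha_1$. For the forward implication I would use that, as $(t,x)$ ranges over the spacetime, the arguments $2u=t+x$ and $-2\underline{u}=x-t$ sweep all of $\mathbb{R}$ independently; requiring the two inequalities everywhere therefore forces $\alpha_1(s)>-\tilde\alpha_0'(s)$ and $\alpha_1(s)>\tilde\alpha_0'(s)$ for every $s$, i.e.\ $\alpha_1>|\tilde\alpha_0'|$, which is \eqref{alfa0 alfa1}.

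For part (iii), once $\alpha>0$ is assumed and the timelike condition is in force, the sign of $\kappa$ in \eqref{gamma} is immediate: the numerator $\alpha$ is positive while the denominator $(\partial_x\alpha)^2-(\partial_t\alpha)^2$ is strictly negative, so $\kappa$ is well-defined and $\kappa<0$. Since $\partial_t\alpha>0$ by the standing hypothesis carried over from (ii), the product $-\kappa\,\partial_t\alpha$ is a product of two positive factors, which gives \eqref{mk dt alpha}.

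The only delicate point is the quantifier bookkeeping in part (ii): the equivalence is genuinely a statement comparing all of spacetime with all arguments of the initial data, and the forward direction relies on the independence of the two null variables $t+x$ and $x-t$. I would phrase this carefully so that the ``if and only if'' is understood globally rather than at a single point, where the spacetime inequality controls the data only at the two isolated arguments $2u$ and $-2\underline{u}$. Everything else is sign-chasing with the explicit formula \eqref{eqns:alpha}.
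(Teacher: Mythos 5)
Your proposal is correct and follows the same route the paper intends (the paper's proof is only the remark that it is ``a straightforward calculation''): part (i) is the square-root of the timelike condition, part (ii) reduces to the null combinations $\partial_t\alpha\pm\partial_x\alpha$ collapsing to the single-variable expressions $\tilde\alpha_0'(2u)+\alpha_1(2u)$ and $\alpha_1(-2\underline{u})-\tilde\alpha_0'(-2\underline{u})$ via \eqref{eqns:alpha}, and part (iii) is sign-chasing in \eqref{gamma}. Your explicit handling of the quantifiers in the forward direction of (ii) is a welcome clarification of a point the paper leaves implicit.
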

 \begin{proof}
The proof is obtained from a straightforward calculation and the use of \eqref{dt_alpha_p}.
\end{proof}
 \begin{rem}
 Notice that condition \eqref{condicion:timelike} is ensured if the initial data for $\alpha$ satisfies  \eqref{alfa0 alfa1}. In addition, this condition allows us to propose an $\alpha$ function that is consistent with the hypotheses of the Theorem \refeq{GLOBAL0}.
 \end{rem}
  
 Now, in order to define a positive energy density and being possible to set control of this density over the momentum density, we propose:
\begin{defn}
For cosmological-type solutions, the energy and momentum densities will be defined as
\begin{equation}\label{e_timelike_0}
 \hat e(t,x)=-\tilde e(t,x),
\end{equation}
and 
\begin{equation}\label{m_timelike} 
\hat p(t,x)= \tilde p(t,x).
\end{equation}
\end{defn}	

\subsection{Proof of Theorem \ref{MT20}}

Theorem \ref{MT20} will be a consequence of the following lemma.

\begin{lem}\label{positivity}
Under \eqref{alfax_alfat}, \eqref{dt_alpha_p} and \eqref{alfa pos}, the energy density defined in \eqref{e_timelike_0} is nonnegative. Moreover, one has an improved estimate
\[
 \hat e\geq  |\kappa|( | \partial_t\alpha|- | \partial_x\alpha|  ) \left[ (\partial_t\Lambda)^2+(\partial_x\Lambda)^2+4\sinh^2(\Lambda)\Big((\partial_t \phi)^2+(\partial_x\phi)^2\Big) \right] .
\]
\end{lem}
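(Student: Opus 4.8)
The plan is to exhibit $\hat e=-\tilde e$ as a nonnegative combination of two quadratic forms, one in $(\partial_t\Lambda,\partial_x\Lambda)$ and one in $(\partial_t\phi,\partial_x\phi)$, each of which is controlled from below by the ``gap'' $|\partial_t\alpha|-|\partial_x\alpha|$ after an elementary Cauchy--Schwarz (AM--GM) estimate on its cross term. The positivity of $\hat e$ will then be immediate.

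First I would collect the sign information furnished by the hypotheses, which is precisely the content of Lemma \ref{condicionSigno0}. The timelike condition \eqref{alfax_alfat} together with positivity \eqref{alfa pos} makes the denominator in \eqref{gamma} negative, so $\kappa<0$ and $|\kappa|=-\kappa$; the condition \eqref{dt_alpha_p} gives $\partial_t\alpha=|\partial_t\alpha|>0$; and \eqref{alfax_alfat} combined with \eqref{dt_alpha_p} yields $|\partial_x\alpha|<\partial_t\alpha$, so that $|\partial_t\alpha|-|\partial_x\alpha|>0$.

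Next, from the definitions \eqref{eq:energiaN_new} and \eqref{e_timelike_0}, I would write $\hat e=-\tilde e=|\kappa|\,\big(B_\Lambda+4\sinh^2(\Lambda)\,B_\phi\big)$, where
\begin{equation*}
B_\Lambda:=\partial_t\alpha\big((\partial_t\Lambda)^2+(\partial_x\Lambda)^2\big)-2\partial_x\alpha\,\partial_t\Lambda\,\partial_x\Lambda,
\end{equation*}
and $B_\phi$ is the same expression with $\Lambda$ replaced by $\phi$; here I have used $|\kappa|=-\kappa$ to absorb the overall minus sign. The key elementary estimate is the cross-term bound $2\,|\partial_x\alpha\,\partial_t\Lambda\,\partial_x\Lambda|\le |\partial_x\alpha|\big((\partial_t\Lambda)^2+(\partial_x\Lambda)^2\big)$, which is just $2|ab|\le a^2+b^2$ weighted by $|\partial_x\alpha|$. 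This gives
\begin{equation*}
B_\Lambda\ge \big(\partial_t\alpha-|\partial_x\alpha|\big)\big((\partial_t\Lambda)^2+(\partial_x\Lambda)^2\big)=\big(|\partial_t\alpha|-|\partial_x\alpha|\big)\big((\partial_t\Lambda)^2+(\partial_x\Lambda)^2\big),
\end{equation*}
and the identical bound for $B_\phi$.

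Combining the two bounds and factoring out the common positive coefficient $|\kappa|\big(|\partial_t\alpha|-|\partial_x\alpha|\big)$ yields exactly the claimed improved estimate; nonnegativity of $\hat e$ then follows since $|\kappa|>0$, $|\partial_t\alpha|-|\partial_x\alpha|>0$, and the remaining bracket is a sum of squares. The argument is essentially sign bookkeeping, so the only real subtlety — and the step I would be most careful about — is correctly tracking that $\kappa<0$ (so that $-\tilde e$, not $\tilde e$, is the positive object) and handling the \emph{indefinite} sign of $\partial_x\alpha$ by passing to $|\partial_x\alpha|$ in the AM--GM step; the hypothesis $|\partial_x\alpha|<\partial_t\alpha$ is exactly what keeps the resulting coefficient positive.
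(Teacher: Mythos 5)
Your proof is correct and follows essentially the same route as the paper's: both arguments reduce to the weighted AM--GM bound $2|\partial_x\alpha|\,|ab|\le|\partial_x\alpha|(a^2+b^2)$ on the cross terms in $\Lambda$ and in $\phi$, together with the sign bookkeeping $\kappa<0$ and $|\partial_x\alpha|<\partial_t\alpha$ from Lemma \ref{condicionSigno0}. The only difference is cosmetic: the paper splits $|\partial_t\alpha|=(|\partial_t\alpha|-|\partial_x\alpha|)+|\partial_x\alpha|$ and exhibits the leftover as perfect squares, whereas you factor out $|\kappa|$ first and bound the two quadratic forms $B_\Lambda$, $B_\phi$ separately.
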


\begin{proof}[Proof of Lemma \ref{positivity}]
We compute: from \eqref{eq:energiaN_new} and Lemma \ref{condicionSigno0} (ii),
\begin{equation}\label{nonnegativity}
	\begin{aligned}
	 \hat e= -\tilde e=&~{}- \kappa \partial_t\alpha \left[ (\partial_t\Lambda)^2+(\partial_x\Lambda)^2+4\sinh^2(\Lambda)\Big((\partial_t \phi)^2+(\partial_x\phi)^2\Big) \right]\\
		&~{}\quad  + 2\kappa \partial_x\alpha\left(\partial_x\Lambda \partial_t\Lambda +4\partial_x \phi \partial_t \phi     \sinh^2(\Lambda) \right)\\
	\geq &~{} 	|\kappa \partial_t\alpha|\left[ (\partial_t\Lambda)^2+(\partial_x\Lambda)^2+4\sinh^2(\Lambda)\Big((\partial_t \phi)^2+(\partial_x\phi)^2\Big) \right] \\
	&~{}\quad  - 2|\kappa \partial_x\alpha| \left(|\partial_x\Lambda ||\partial_t\Lambda| +4|\partial_x \phi ||\partial_t \phi |    \sinh^2(\Lambda) \right)\\
	= &~{} 	|\kappa|( | \partial_t\alpha|- | \partial_x\alpha|  ) \left[ (\partial_t\Lambda)^2+(\partial_x\Lambda)^2+4\sinh^2(\Lambda)\Big((\partial_t \phi)^2+(\partial_x\phi)^2\Big) \right]  \\
	&~{}  + |\kappa \partial_x\alpha| \Big( (\partial_t\Lambda)^2+(\partial_x\Lambda)^2- 2 |\partial_x\Lambda ||\partial_t\Lambda|  \Big)\\
	&~{}  +4 |\kappa| |\partial_x\alpha| \sinh^2(\Lambda)\Big((\partial_t \phi)^2+(\partial_x\phi)^2 - 2 |\partial_x \phi ||\partial_t \phi |   \Big)\\
	 \geq &~{} |\kappa|( | \partial_t\alpha|- | \partial_x\alpha|  ) \left[ (\partial_t\Lambda)^2+(\partial_x\Lambda)^2+4\sinh^2(\Lambda)\Big((\partial_t \phi)^2+(\partial_x\phi)^2\Big) \right] \geq  0.
\end{aligned}
\end{equation}
The proof is complete.
\end{proof}

Furthermore, under this same hypothesis, we can establish an appropriate control of the energy density on the momentum density. Recall that the condition that $\partial_t \alpha>0$, necessarily implies that the function $\alpha_1 (s)> 0,  \forall s \in \mathbb{R}$, which is in correspondence with the setting proposed for the function $\alpha(t,x)$ in the previous global existence theory.
We can obtain the following result:
\begin{lem}\label{LEM} Under \eqref{alfax_alfat}, \eqref{dt_alpha_p} and \eqref{alfa pos},
	\begin{equation}\label{ME_new}
	|\hat p (t,x) | \leq \hat e(t,x) .
	\end{equation}
\end{lem}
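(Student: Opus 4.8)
The plan is to establish the two-sided estimate $-\hat e \le \hat p \le \hat e$ by proving separately that $\hat e + \hat p \ge 0$ and $\hat e - \hat p \ge 0$. First I would rewrite both densities compactly in terms of $h_1$ and $h_2$ from \eqref{h1_0}. By Lemma \ref{condicionSigno0}(iii) the weight $\kappa$ is negative under the standing hypotheses, so writing $\kappa = -|\kappa|$ and using \eqref{eq:energiaN_new}--\eqref{eq:momentumN_new} together with \eqref{e_timelike_0}--\eqref{m_timelike} gives
\begin{equation*}
\hat e = |\kappa|\,\partial_t\alpha\, h_1 - 2|\kappa|\,\partial_x\alpha\, h_2, \qquad \hat p = -|\kappa|\,\partial_x\alpha\, h_1 + 2|\kappa|\,\partial_t\alpha\, h_2.
\end{equation*}

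Next I would add and subtract these expressions and factor. A direct computation yields
\begin{equation*}
\hat e + \hat p = |\kappa|\,(\partial_t\alpha - \partial_x\alpha)\,(h_1 + 2h_2), \qquad \hat e - \hat p = |\kappa|\,(\partial_t\alpha + \partial_x\alpha)\,(h_1 - 2h_2).
\end{equation*}
The crucial structural observation is that $h_1 \pm 2h_2$ are perfect sums of squares:
\begin{equation*}
h_1 \pm 2h_2 = (\partial_t\Lambda \pm \partial_x\Lambda)^2 + 4\sinh^2(\Lambda)\,(\partial_t\phi \pm \partial_x\phi)^2 \ge 0,
\end{equation*}
which follows by expanding and recognizing the cross terms as exactly $\pm 2 h_2$.

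Finally I would invoke the sign information on $\alpha$. Since $\partial_t\alpha > 0$ by \eqref{dt_alpha_p} and $|\partial_x\alpha| < \partial_t\alpha$ by \eqref{alfax_alfat} combined with Lemma \ref{condicionSigno0}(ii), both factors $\partial_t\alpha - \partial_x\alpha$ and $\partial_t\alpha + \partial_x\alpha$ are strictly positive; together with $|\kappa| > 0$ and the nonnegativity of $h_1 \pm 2h_2$, this forces $\hat e + \hat p \ge 0$ and $\hat e - \hat p \ge 0$, that is $|\hat p| \le \hat e$. There is no real obstacle here beyond careful sign bookkeeping: the entire content is a completion of squares, and the only genuine inputs are the negativity of $\kappa$ and the inequality $\partial_t\alpha > |\partial_x\alpha|$ supplied by Lemma \ref{condicionSigno0}, which are precisely the cosmological-type assumptions.
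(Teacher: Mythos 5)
Your proof is correct and rests on the same ingredients as the paper's: the bound $2|h_2|\le h_1$ (which your sum-of-squares identity for $h_1\pm 2h_2$ makes explicit), the negativity of $\kappa$, and the positivity of $\partial_t\alpha\pm\partial_x\alpha$. The only difference is presentational — you factor $\hat e\pm\hat p$ directly, whereas the paper reaches the same two one-sided inequalities through a chain of sign manipulations — so this is essentially the paper's argument in a cleaner form.
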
 
\begin{proof}
To simplify the notation, let us define:
\begin{equation}\label{h1}
 h_1(t,x)= (\partial_t \Lambda)^2+ (\partial_x \Lambda)^2+4\sinh^2(\Lambda)((\partial_x \phi)^2+(\partial_t \phi)^2) \geq 0,
\end{equation}
and
\begin{equation}\label{h2}
h_2(t,x)= \partial_t\Lambda \partial_x\Lambda +4\sinh^2(\Lambda)\partial_t\phi \partial_x\phi,
\end{equation}
then, the energy density and the momentum density can be written as 
\[
\hat e= -\kappa (\partial_t \alpha h_1 -2 \partial_x \alpha h_2), \quad \quad \hat p= \kappa (\partial_x\alpha h_1-2  \partial_t \alpha h_2).
\]
Recall that, $\hat e\geq |\kappa|( | \partial_t\alpha|- | \partial_x\alpha|  ) h_1\geq  0$ thanks to \eqref{nonnegativity}.
Now, let us prove \eqref{ME}. Using the Cauchy inequality and the condition \eqref{condicion:timelike}, one has $2|h_2| \leq h_1$. Therefore, using that $\kappa<0$, $ \kappa h_1 \leq 2\kappa h_2 $. Since $\partial_t \alpha+\partial_x \alpha>0$, one has

\[
\kappa h_1(\partial_t \alpha+\partial_x \alpha) \leq  2\kappa h_2(\partial_t \alpha+\partial_x \alpha).
\]
Consequently,
\[
 \kappa \partial_x \alpha h_1-2\kappa \partial_t \alpha h_2  <-\kappa \partial_t \alpha h_1+2\kappa \partial_x \alpha h_2,
\]
which proves that $\hat p \leq \hat e$.

For the other direction we have $\partial_t \alpha-\partial_x \alpha>0$, $2\kappa h_2 \leq - \kappa h_1$, so that
\begin{equation*}
\begin{aligned}
-\kappa h_1(\partial_t \alpha-\partial_x \alpha) &\geq 2\kappa h_2(\partial_t \alpha-\partial_x \alpha), \\
  \kappa \partial_x \alpha h_1-2\kappa \partial_t \alpha h_2 & \geq \kappa \partial_t \alpha h_1-2\kappa \partial_x \alpha h_2,\\
\hat p & \geq -\hat e. \\
\end{aligned}
\end{equation*}
Therefore, we obtain control of energy density over momentum density
\[ |\hat p(t,x)| \leq \hat e(t,x).
\]
The proof is complete.
\end{proof}
With the previous definitions  of hat-densities  \eqref{e_timelike_0} and \eqref{m_timelike} and the identities obtained in Corollary \refeq{eqContinuidad_new}, one has the following consequences (modified continuity equations): 
\begin{cor}\label{EC:timelike} Let $(\Lambda,\phi, \alpha)$ solutions of the system \eqref{sistema1}. Under the assumptions of Lemma \ref{eqContinuidad0}, one has\begin{equation}\label{CEtimelike}
		\begin{aligned}
		&\partial_t \hat p(t,x)-\partial_x \hat e(t,x)=0,\\
			&\partial_t \hat e(t,x)-\partial_x \hat p(t,x)=4\sinh^2(\Lambda)\left(\phi_x^2-\phi_t^2\right)+\Lambda_x^2-\Lambda_t^2 .
		\end{aligned}
	\end{equation}
\end{cor}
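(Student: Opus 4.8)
The plan is to obtain the result directly from Corollary \ref{eqContinuidad_new} by substituting the definitions of the cosmological-type densities. Recall from \eqref{e_timelike_0} and \eqref{m_timelike} that $\hat e = -\tilde e$ and $\hat p = \tilde p$. Since these are pointwise algebraic relations, they commute with the differential operators $\partial_t$ and $\partial_x$, so no additional regularity or computation is needed beyond what Corollary \ref{eqContinuidad_new} already supplies. The hypotheses inherited from Lemma \ref{eqContinuidad0}, in particular the nonvanishing of $(\partial_x\alpha)^2 - (\partial_t\alpha)^2$, are precisely those guaranteeing that $\kappa$ in \eqref{gamma}, and hence $\tilde e$ and $\tilde p$, are well-defined, so the identities of Corollary \ref{eqContinuidad_new} hold on the relevant region and transfer verbatim.

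For the first identity I would write $\partial_t \hat p - \partial_x \hat e = \partial_t \tilde p - \partial_x(-\tilde e) = \partial_t \tilde p + \partial_x \tilde e$, which vanishes by the first equation in Corollary \ref{eqContinuidad_new}. For the second identity I would compute $\partial_t \hat e - \partial_x \hat p = -\partial_t \tilde e - \partial_x \tilde p = -(\partial_t \tilde e + \partial_x \tilde p)$; applying the second equation in Corollary \ref{eqContinuidad_new} and distributing the overall minus sign flips the sign of every term on the right-hand side, turning $4\sinh^2(\Lambda)(\phi_t^2 - \phi_x^2) + \Lambda_t^2 - \Lambda_x^2$ into $4\sinh^2(\Lambda)(\phi_x^2 - \phi_t^2) + \Lambda_x^2 - \Lambda_t^2$, exactly as claimed.

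Since the argument is a one-line substitution with sign tracking, there is essentially no analytic obstacle; the only point requiring attention is the bookkeeping of the single sign reversal introduced by $\hat e = -\tilde e$ in both displayed equations, which must be carried consistently through the source term of the second identity. I expect the proof to consist of no more than these two short computations.
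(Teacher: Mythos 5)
Your proposal is correct and coincides with the paper's own (implicit) argument: the corollary is obtained precisely by substituting $\hat e=-\tilde e$ and $\hat p=\tilde p$ into the identities of Corollary \ref{eqContinuidad_new} and tracking the single sign flip, which is exactly the two-line computation you carry out. The sign bookkeeping in both displayed identities is accurate, so nothing further is needed.
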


Lemma \refeq{LEM} and Corollary \refeq{EC:timelike} will become very important, in the sense that the control that new energy density has over the momentum density allows us to propose a virial estimate, and analyze the long-time behavior of the cosmological type solution, as we will be discussing in the subsequent sections. We now discuss the energy formulation for the case where the gradient of the function $\alpha$ is spacelike.

%%%%%%%%%%%%%%%%%%%%

\subsection{Cylindrical Gravitational waves} Let $u^{(0)}=u^{(0)}(t,r)$ be a solution to the cylindrical wave equation in 2D:
\[
\partial_t^2 u^{(0)} = \frac1r \partial_r \left(r \partial_r u^{(0)}\right ), \quad (t,r)\in \mathbb R_t \times (0,\infty).
\]
As usual, $\alpha$ satisfies the 1D wave equation in $(t,r)$. Let us introduce the following line element of a cylindrically symmetric spacetime as follows:
\begin{equation}\label{stinterval0}
ds^2=f^{(0)}(-dt^2+dr^2)+e^{-u^{(0)}}(\alpha d\phi)^2 + e^{u^{(0)}}dz^2,
\end{equation}
with $x^a=\{ \phi, z\}$ and $x^i=\{ t,r\}$ and $r>0$. This line element belongs to the class of solutions considered in the Belinski-Zakharov spacetime setting, where 
\[
g = \left[ \begin{matrix} \alpha^2 e^{-u^{(0)}} & 0 \\  0 & e^{u^{(0)}} \end{matrix}\right] .
\]  
A particular case of the metric \eqref{stinterval0} is the one given by the Einstein-Rosen model, where $\alpha\equiv r$. See \eqref{stinterval} and \eqref{stinterval1} for more details.

\medskip
As mentioned before, the local behavior of the considered spacetime is defined by the gradient of the function $\alpha$.  In the case that this gradient is spacelike, it actually corresponds to cylindrical spacetimes. Notice that metric \eqref{stinterval0} is a particular case of \eqref{intervalo} in cylindrical coordinates,  where the fields described by the geometric representation \eqref{diag1}, are given as follows:  the field $\phi$ is a  constant, and the field $\Lambda =u^{(0)}$. In this section, we will consider precisely this general setting.  Consider the system \eqref{sistema1} with $\alpha$ as a positive solution of the one-dimensional wave equation, and satisfying the so-called space-like condition, which will be described below. Thus, we capture the essential condition describing the Einstein-Rosen gravitational wave metric \cite{carmeli1984einstein}.

As in the previous subsection, we introduce some preliminary definitions and results.

 \begin{defn}[Spacelike Condition] We say that $\alpha(t,x)$ is spacelike if its gradient satisfies 
 \begin{equation}\label{condicion:spacelike}
(\partial_r \alpha)^2-(\partial_t \alpha)^2 > 0, \quad \forall (t,r). 
\end{equation}
 \end{defn}

The spacelike condition \eqref{condicion:spacelike} contrasts with the timelike one in \eqref{condicion:timelike} not only by the obvious 
reason (opposite signs), but also because it will allow not decaying solutions to the problem. In this sense, one can guess that no general virial theorem is present in this situation, unless we assume additional hypotheses on $u^{(0)}$ and $\alpha$.

\medskip 
 
 Coming back to \eqref{stinterval0}, and using the same notation for the initial conditions for $\alpha$ as in \eqref{eqn:alpha_DA},  with $\alpha(t,x)$ spacelike everywhere, one has the following result:
 \begin{lem}\label{condicionSigno}
  If the function $\alpha$ is spacelike, % in spacetime, One has
\[
|\partial_t \alpha|< |\partial_r \alpha|,
\]
and the following are satisfied:
\begin{enumerate}
\item[(i)] if $(t,r)$ is such that 
\begin{equation}\label{dr_alpha_p}
\partial_r \alpha (t,r) >0,
\end{equation}
then, $|\partial_t \alpha| < \partial_r \alpha$ if and only if   the initial data of the function $\alpha$ satisfy
\begin{equation*}
	 |\alpha_1(\cdot)|< \tilde \alpha'_0(\cdot)
\end{equation*} 
\item[(ii)] the parameter $\kappa$ defined in \eqref{gamma} (with $x$ replaced by the variable $r$) is well-defined and positive, and 
\begin{equation}\label{mk dr alpha}
\kappa \partial_r\alpha >0.
\end{equation}
\end{enumerate}
 \end{lem}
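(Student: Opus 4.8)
The plan is to follow the template of the timelike Lemma~\ref{condicionSigno0}, replacing $x$ by $r$ and exploiting the factorization of the Lorentzian gradient norm of $\alpha$ into its two null derivatives. Writing $L\alpha=\partial_t\alpha+\partial_r\alpha$ and $\underline L\alpha=\partial_t\alpha-\partial_r\alpha$ as in \eqref{L Lbar}, one has the identity
\[
(\partial_t\alpha)^2-(\partial_r\alpha)^2=(L\alpha)(\underline L\alpha),
\]
and the explicit D'Alembert derivatives \eqref{eqns:alpha} (with $u=(t+r)/2$, $\underline u=(t-r)/2$) collapse these null derivatives into single-variable expressions,
\[
L\alpha=\tilde\alpha_0'(2u)+\alpha_1(2u),\qquad \underline L\alpha=\alpha_1(-2\underline u)-\tilde\alpha_0'(-2\underline u).
\]
The spacelike condition \eqref{condicion:spacelike} is precisely $(\partial_t\alpha)^2-(\partial_r\alpha)^2<0$, i.e.\ $|\partial_t\alpha|<|\partial_r\alpha|$, which is the opening inequality, and which by the factorization means that $L\alpha$ and $\underline L\alpha$ carry opposite signs at every point.

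For part (i) I would work under the standing hypothesis $\partial_r\alpha>0$ and note that $|\partial_t\alpha|<\partial_r\alpha$ is equivalent to the simultaneous inequalities $L\alpha=\partial_t\alpha+\partial_r\alpha>0$ and $\underline L\alpha=\partial_t\alpha-\partial_r\alpha<0$. Substituting the expressions above, these read $\tilde\alpha_0'(2u)+\alpha_1(2u)>0$ and $\alpha_1(-2\underline u)<\tilde\alpha_0'(-2\underline u)$. Since the arguments $2u=t+r$ and $-2\underline u=r-t$ each range over all of $\mathbb R$ as $(t,r)$ varies, requiring the two inequalities at every spacetime point is equivalent to demanding, for every $s\in\mathbb R$, both $\alpha_1(s)>-\tilde\alpha_0'(s)$ and $\alpha_1(s)<\tilde\alpha_0'(s)$, that is $|\alpha_1(s)|<\tilde\alpha_0'(s)$. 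This yields the claimed equivalence with the initial-data condition; conversely, $|\alpha_1|<\tilde\alpha_0'$ forces $L\alpha>0>\underline L\alpha$, whence $\partial_r\alpha=\tfrac{1}{2}(L\alpha-\underline L\alpha)>0$ and $|\partial_t\alpha|<\partial_r\alpha$, so the hypothesis $\partial_r\alpha>0$ is consistent with (indeed implied by) the data condition.

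For part (ii), the standing assumption that $\alpha>0$ throughout spacetime makes the numerator of $\kappa=\alpha/((\partial_r\alpha)^2-(\partial_t\alpha)^2)$ in \eqref{gamma} positive, while the spacelike condition \eqref{condicion:spacelike} makes the denominator strictly positive; hence $\kappa$ is well defined and $\kappa>0$. Combining with $\partial_r\alpha>0$ from part (i) gives $\kappa\partial_r\alpha>0$, which is \eqref{mk dr alpha}. No genuine analytic difficulty arises here: the whole argument is a chain of algebraic sign manipulations. The only step requiring real care — mirroring what makes the timelike case delicate — is the bookkeeping when passing from the pointwise null-derivative inequalities to the conditions on $\tilde\alpha_0'$ and $\alpha_1$, together with the verification that $2u$ and $-2\underline u$ genuinely sweep out all of $\mathbb R$, so that one obtains a true equivalence rather than a one-sided implication.
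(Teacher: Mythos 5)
Your proof is correct and is exactly the ``straightforward calculation'' the paper invokes (its proof of Lemma~\ref{condicionSigno} simply defers to the analogous computation in Lemma~\ref{condicionSigno0}): factoring $(\partial_t\alpha)^2-(\partial_r\alpha)^2=(L\alpha)(\underline L\alpha)$, reading off $L\alpha$ and $\underline L\alpha$ from the D'Alembert formulas \eqref{eqns:alpha}, and translating the pointwise sign conditions into the data condition $|\alpha_1|<\tilde\alpha_0'$. The only point worth noting is that positivity of $\kappa$ in (ii) uses the standing assumption $\alpha>0$, which you correctly flag even though the lemma statement omits it (it is explicit in the timelike counterpart as \eqref{alfa pos}).
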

 \begin{proof}[Proof of Lemma \ref{condicionSigno}] 
The proof is obtained from a straightforward calculation as in Lemma \ref{condicionSigno0}.
\end{proof}

Comparing with \eqref{dt_alpha_p} and \eqref{mk dt alpha}, one can see that \eqref{dr_alpha_p} and \eqref{mk dr alpha} are ``dual'' to the former ones. Although one can think that these properties are not harmful, it turns the case that this is exactly the case: these signs are bad for decay purposes by natural reasons: spacelike dynamics tends to be unphysical in reality.

\medskip

To ensure that we have an appropriate energy and to be possible to set a control of this density over the momentum one, we define
\begin{defn}
For cylindrical-type solutions, the energy and momentum densities are defined as
\begin{equation}\label{e_spacelike}
 \hat e(t,r) =\tilde p(t,r),
\end{equation}
and 
\begin{equation}\label{m_spacelike} 
\hat p(t,r)= \tilde e(t,r).
\end{equation}
\end{defn}	
Notice that in this case, when the gradient of the function $\alpha(t,r)$ is spacelike, the parameter $\kappa$ defined in \eqref{gamma}, is positive. Now, with these redefinitions, we provide analogous estimates to those obtained in the case of cosmological-type solutions. 

\begin{lem}\label{LEM1} If $\partial_r \alpha (t,r) > 0 $ globally in spacetime, then the energy density $\hat e(t,r)$  is always nonnegative. Moreover,  we have
	\begin{equation}\label{ME}
	|\hat p (t,r) | \leq \hat e(t,r) .
	\end{equation}
\end{lem}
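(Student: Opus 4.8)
The plan is to exploit the duality between the spacelike regime treated here and the cosmological (timelike) regime already handled in Lemmas \ref{positivity} and \ref{LEM}. Writing, exactly as in \eqref{h1}--\eqref{h2},
\[
h_1 = (\partial_t\Lambda)^2+(\partial_r\Lambda)^2 + 4\sinh^2(\Lambda)\big((\partial_t\phi)^2+(\partial_r\phi)^2\big), \qquad h_2 = \partial_t\Lambda\,\partial_r\Lambda + 4\sinh^2(\Lambda)\,\partial_t\phi\,\partial_r\phi,
\]
the redefined densities from \eqref{e_spacelike}--\eqref{m_spacelike} read $\hat e = \tilde p = \kappa(\partial_r\alpha\, h_1 - 2\partial_t\alpha\, h_2)$ and $\hat p = \tilde e = \kappa(\partial_t\alpha\, h_1 - 2\partial_r\alpha\, h_2)$, with $\kappa$ as in \eqref{gamma}. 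The first ingredient I would record is the elementary pointwise bound $2|h_2|\le h_1$, which follows from $2|\partial_t\Lambda||\partial_r\Lambda|\le (\partial_t\Lambda)^2+(\partial_r\Lambda)^2$ and $2|\partial_t\phi||\partial_r\phi|\le (\partial_t\phi)^2+(\partial_r\phi)^2$ together with $\sinh^2(\Lambda)\ge 0$.

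The second ingredient is the sign information supplied by Lemma \ref{condicionSigno}: under the standing spacelike hypothesis \eqref{condicion:spacelike} together with $\partial_r\alpha>0$, one has $\kappa>0$ (see \eqref{mk dr alpha}) and $\partial_r\alpha > |\partial_t\alpha|$, hence both $\partial_r\alpha - \partial_t\alpha>0$ and $\partial_r\alpha+\partial_t\alpha>0$. For nonnegativity of $\hat e$ I would then estimate $-2\partial_t\alpha\, h_2 \ge -2|\partial_t\alpha|\,|h_2| \ge -|\partial_t\alpha|\, h_1$, so that
\[
\hat e = \kappa(\partial_r\alpha\, h_1 - 2\partial_t\alpha\, h_2) \ge \kappa\big(\partial_r\alpha - |\partial_t\alpha|\big) h_1 \ge 0,
\]
which simultaneously yields the sharper lower bound in complete analogy with Lemma \ref{positivity}.

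For the control $|\hat p|\le \hat e$, the cleanest route is to form the two combinations $\hat e \mp \hat p$ and factor them. A direct computation gives
\[
\hat e - \hat p = \kappa(\partial_r\alpha - \partial_t\alpha)(h_1 + 2h_2), \qquad \hat e + \hat p = \kappa(\partial_r\alpha + \partial_t\alpha)(h_1 - 2h_2).
\]
Since $\kappa>0$, both prefactors $\partial_r\alpha\mp\partial_t\alpha$ are strictly positive by the spacelike bound, and both factors satisfy $h_1\pm 2h_2\ge h_1 - 2|h_2|\ge 0$ by the first ingredient; hence $\hat e - \hat p\ge 0$ and $\hat e + \hat p\ge 0$, i.e.\ $|\hat p|\le \hat e$.

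I do not expect any genuine analytic obstacle here, since the statement is formally dual to Lemmas \ref{positivity} and \ref{LEM}. The only real point is to keep careful track of the reversed sign conventions: now $\kappa>0$ rather than $\kappa<0$, the energy is $\tilde p$ rather than $-\tilde e$, and it is $\partial_r\alpha$ (not $\partial_t\alpha$) that dominates in absolute value. The main thing to verify is therefore that these sign flips conspire correctly so that each factored expression remains nonnegative, which the computation above confirms.
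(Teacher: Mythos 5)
Your proof is correct and is essentially the argument the paper intends: the paper gives no details here, merely asserting the result follows as in Lemma \ref{LEM}, and your computation is precisely that dual argument with the sign flips ($\kappa>0$, $\partial_r\alpha>|\partial_t\alpha|$) tracked correctly. Your factorizations $\hat e\mp\hat p=\kappa(\partial_r\alpha\mp\partial_t\alpha)(h_1\pm 2h_2)$ are just a cleaner packaging of the same multiplication-of-inequalities step used in Lemma \ref{LEM}, resting on the same ingredients $2|h_2|\le h_1$ and the sign information from Lemma \ref{condicionSigno}.
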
 
The proof of \eqref{ME_new}, considering the constraints, is obtained in a similar way as in the previous section, see Lemma \ref{LEM} for more details. Lemma \ref{LEM1} is always useful to understand the right notion of energy.

\medskip

Finally, similar to the previous section, with the formulation of energy and momentum densities given in \eqref{e_spacelike}-\eqref{m_spacelike},  the identity equations obtained in the Corollary \refeq{eqContinuidad_new}, provide the following modified continuity equations:
\begin{lem}\label{EC:spacelike} Let $(\Lambda,\phi, \alpha)$ solutions of the system \eqref{sistema1}, and $\alpha(t,r)$ spacelike, then, we have the following continuity equations
\begin{equation}\label{CEspacelike}
		\begin{aligned}
		&\partial_t \hat e(t,r)+\partial_r \hat p(t,r)=0,\\
			&{\color{black}\partial_t \hat p(t,r)+\partial_r \hat e(t,r)=4\sinh^2(\Lambda)\left(\phi_r^2-\phi_t^2\right)+\Lambda_r^2-\Lambda_t^2.
}
		\end{aligned}
	\end{equation}
\end{lem}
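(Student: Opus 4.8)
The statement \eqref{CEspacelike} is an immediate consequence of Corollary \ref{eqContinuidad_new} once the spacelike definitions \eqref{e_spacelike}--\eqref{m_spacelike} are inserted, so the plan is to reduce it to that already-established identity and then track the signs carefully. First I would verify the hypothesis: the spacelike condition \eqref{condicion:spacelike} gives $(\partial_r\alpha)^2-(\partial_t\alpha)^2>0$ at every point, so in particular this quantity is nonzero throughout spacetime and $\kappa$ in \eqref{gamma} (with $x$ relabelled $r$) is well defined---indeed positive by Lemma \ref{condicionSigno}(ii). Hence the open-region hypothesis of Lemma \ref{eqContinuidad0}, and therefore of Corollary \ref{eqContinuidad_new}, holds everywhere, and the two identities $\partial_t\tilde p+\partial_r\tilde e=0$ and $\partial_t\tilde e+\partial_r\tilde p=4\sinh^2(\Lambda)(\phi_t^2-\phi_r^2)+\Lambda_t^2-\Lambda_r^2$ are available at each $(t,r)$.

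Next I would substitute $\hat e=\tilde p$ and $\hat p=\tilde e$ from \eqref{e_spacelike}--\eqref{m_spacelike}. The first equation follows at once, since $\partial_t\hat e+\partial_r\hat p=\partial_t\tilde p+\partial_r\tilde e=0$, which is exactly the divergence-free line of \eqref{CEspacelike}. For the second I would compute $\partial_t\hat p+\partial_r\hat e=\partial_t\tilde e+\partial_r\tilde p$ and read off the source term from the second identity of Corollary \ref{eqContinuidad_new}. In contrast to the cosmological case no density is negated here---the two are merely interchanged---which is precisely why both lines of \eqref{CEspacelike} carry plus signs rather than the minus signs of \eqref{CEtimelike}: the sign flip in the cosmological source of \eqref{CEtimelike} originated from the choice $\hat e=-\tilde e$ in \eqref{e_timelike_0}, whereas here the source is inherited directly.

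The delicate point---and the step I would carry out most carefully---is the sign and convention bookkeeping in the source of the second line, since the quadratic expression $4\sinh^2(\Lambda)(\phi_r^2-\phi_t^2)+\Lambda_r^2-\Lambda_t^2$ and $4\sinh^2(\Lambda)(\phi_t^2-\phi_r^2)+\Lambda_t^2-\Lambda_r^2$ differ by an overall sign. I would therefore fix the orientation conventions explicitly---the relabelling $x\mapsto r$ and the ordering of $\partial_t$ versus $\partial_r$ in the definitions \eqref{eq:energiaN_new}--\eqref{eq:momentumN_new}---and recompute the source term rather than relying on a formal substitution, to be sure the final right-hand side is written with the correct sign. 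Apart from this the computation is entirely mechanical, being inherited verbatim from Corollary \ref{eqContinuidad_new}, so no new analytic estimate is required.
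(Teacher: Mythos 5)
Your approach is exactly the paper's: the paper proves this lemma in one line by substituting the spacelike definitions $\hat e=\tilde p$, $\hat p=\tilde e$ into Corollary \ref{eqContinuidad_new}, and your first identity $\partial_t\hat e+\partial_r\hat p=\partial_t\tilde p+\partial_r\tilde e=0$ is correct. The sign issue you flag in the second line is real and worth resolving explicitly: the direct substitution gives
\begin{equation*}
\partial_t\hat p+\partial_r\hat e=\partial_t\tilde e+\partial_r\tilde p=4\sinh^2(\Lambda)\left(\phi_t^2-\phi_r^2\right)+\Lambda_t^2-\Lambda_r^2,
\end{equation*}
which is the \emph{negative} of the right-hand side printed in \eqref{CEspacelike}. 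Unlike the cosmological case, where the overall sign flip of the source in \eqref{CEtimelike} is generated by the choice $\hat e=-\tilde e$ in \eqref{e_timelike_0}, here the densities are merely interchanged and no sign flip occurs, so the source term should be inherited verbatim from Corollary \ref{eqContinuidad_new}; the stated right-hand side of \eqref{CEspacelike} appears to carry a sign typo. Your method is sound and complete once you commit to the sign that the computation actually produces.
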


The proof of this result is obtained in a similar way to the previous subsection. An important remark obtained from \eqref{CEspacelike} is the following: in this set of identities, the role of energy is played by the momentum, and vice versa. This somehow harmless condition destroys possible computations of decay by showing that the quantity that decays has no particular positivity. However, we expect to consider Lemma \ref{EC:spacelike} in forthcoming works.
	
\section{Virial Estimates for Cosmological-type Solutions}\label{Sect:5b}

Let us come back to the setting already worked in Subsection \ref{CTS}. In what follows, let us consider $(\Lambda,\phi,\alpha)$ globally defined in time and continuous such that
%\hbox{\eqref{cosmological type} and \eqref{uniform} are satisfied.}
\begin{equation}\label{p dom e}
\hbox{\eqref{cosmological type} and \eqref{uniform} are satisfied.}
\end{equation}
Note that \eqref{alfax_alfat} is a consequence of assuming \eqref{cosmological type} in Theorem \ref{MT2}. 
Finally,
\[
E[\Lambda,\phi; \alpha] := \int_{\mathbb{R}} \hat e(t,x)dx 
\]
 is well-defined for all time and bounded: 
\begin{equation}\label{finite_energy}
0\leq  E[\Lambda,\phi; \alpha]  \leq \sup_{t\in\mathbb R} E[\Lambda,\phi;\alpha] < +\infty.
 \end{equation}
Notice that this time $E[\Lambda,\phi; \alpha] $ is not conserved (see \eqref{CEtimelike}). 

\begin{rem}
Condition \eqref{finite_energy} is not empty, for instance if the data is given as in Theorem \ref{GLOBAL0} and $\alpha$ satisfies the time-like condition \eqref{condicion:timelike} and \eqref{dt_alpha_p}, then   \eqref{supuesto}-\eqref{supuesto1} ensures that the energy is bounded in time as in \eqref{finite_energy}. See Lemma \ref{EdCo} for a proof.
\end{rem}

We introduce a Virial identity for the Einstein field equation \eqref{sistema1}. Indeed, let $\rho$ be a smooth bounded function with $L^1\cap L^\infty$ integrable derivative. Let $\omega(t)$ be a smooth positive function to be chosen later, not necessarily varying in time. Finally,  for $v\in (-1,1)$ let
\begin{equation} \label{virial}
    \begin{aligned}
      \mathcal{I}(t):= &~{} -\int \rho\left( \frac{x-vt}{\omega(t)}\right)\kappa\partial_x \alpha \left(4\sinh^2(\Lambda)((\partial_t \phi)^2+(\partial_x\phi)^2)+
         (\partial_t\Lambda)^2+(\partial_x\Lambda)^2 \right)dx\\
         &~{} \quad  +\int \rho \left( \frac{x-vt}{\omega(t)}\right) \kappa \partial_t \alpha \left( 2\partial_x\Lambda\partial_t\Lambda+ 8\partial_x\phi \partial_t\phi \sinh^2(\Lambda) \right)dx\\
         =:  &~{} -\int  \rho\left( \frac{x-vt}{\omega(t)}\right) \hat p (t,x)dx.
  \end{aligned}
\end{equation}
A time-dependent weight $\omega(t)$ was already considered in \cite{Alejo2018, JT}, but $\omega(t)=const.$ is also perfectly possible. The choice of $\mathcal{I}(t)$ is motivated by the momentum and energy densities.

\begin{lem}[Virial identity]\label{Virial2} One has $\mathcal I(t)$ well-defined and bounded in time, and 
\begin{equation}\label{dt I}
 \begin{aligned}
    \frac{d}{dt}\mathcal{I}(t)=&~{} -\frac{\omega'(t)}{\omega (t)}\int \frac{x-vt}{\omega(t)} \rho'\left( \frac{x-vt}{\omega(t)}\right)\hat p(t,x)\\
                                         &~{} +\frac{v}{\omega(t)}\int \rho'\left( \frac{x-vt}{\omega(t)}\right)\hat p(t,x)\\
                                         &~{} + \frac{1}{\omega(t)}\int \rho'\left( \frac{x-vt}{\omega(t)}\right)\hat e(t,x).
 \end{aligned}
\end{equation}
\end{lem}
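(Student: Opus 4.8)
The plan is to differentiate the integral defining $\mathcal{I}(t)$ directly under the integral sign, tracking the three places where time-dependence enters: the explicit $-vt$ shift in the argument of $\rho$, the $\omega(t)$ rescaling in that argument, and the time-dependence of the density $\hat p(t,x)$ itself. The first two contributions come from the chain rule applied to $\rho\big(\tfrac{x-vt}{\omega(t)}\big)$, while the third requires the continuity equation for $\hat p$ from Corollary \ref{EC:timelike}.

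First I would justify that $\mathcal{I}(t)$ is well-defined and differentiable. By the improved positivity estimate of Lemma \ref{positivity} together with the control $|\hat p|\le \hat e$ from Lemma \ref{LEM}, the integrand is dominated by $\rho$ times $\hat e$, which is integrable and uniformly bounded in time under \eqref{finite_energy}; since $\rho$ is bounded with integrable derivative, differentiation under the integral is legitimate. Writing $y:=\tfrac{x-vt}{\omega(t)}$, the time derivative of $\rho(y)$ is $\rho'(y)\,\partial_t y$, and a direct computation gives
\[
\partial_t y = -\frac{v}{\omega(t)} - \frac{\omega'(t)}{\omega(t)}\cdot\frac{x-vt}{\omega(t)}.
\]
This produces exactly the first two lines of \eqref{dt I} after inserting $\hat p(t,x)$, with the signs matching because $\mathcal{I}(t)=-\int \rho(y)\hat p\,dx$.

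The key remaining step is the third line, coming from $\partial_t\hat p$. Here I would invoke the first identity in Corollary \ref{EC:timelike}, namely $\partial_t\hat p = \partial_x\hat e$, so that
\[
-\int \rho(y)\,\partial_t\hat p\,dx = -\int \rho(y)\,\partial_x\hat e\,dx = \frac{1}{\omega(t)}\int \rho'(y)\,\hat e(t,x)\,dx,
\]
where I integrate by parts in $x$ and use $\partial_x\rho(y)=\tfrac{1}{\omega(t)}\rho'(y)$. This yields precisely the $\hat e$ term. The boundary terms vanish because the densities decay: $\hat e$ is integrable and $\rho$ is bounded, and the Schwartz-class control on the fields (via Lemma \ref{EdCo}) ensures the product tends to zero at spatial infinity.

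The main obstacle I anticipate is the rigorous justification of the integration by parts and the vanishing of boundary terms, since $\hat e$ involves the possibly-growing factor $\kappa=\alpha/((\partial_x\alpha)^2-(\partial_t\alpha)^2)$. One must combine the lower bound $\alpha>c_0$ and the uniform Schwartz control of $\partial_t\alpha$ from \eqref{uniform} with the decay of $h_1$ from Lemma \ref{EdCo} to confirm that $\hat e(t,\cdot)$ genuinely decays in $x$ fast enough; this is where the cosmological-type hypotheses \eqref{cosmological type}–\eqref{uniform} are essential rather than cosmetic. Once this decay is secured, the three contributions assemble into \eqref{dt I} and the boundedness of $\mathcal{I}(t)$ follows from $|\mathcal{I}(t)|\le \|\rho\|_\infty\int\hat e\,dx \le \|\rho\|_\infty\sup_t E[\Lambda,\phi;\alpha]$.
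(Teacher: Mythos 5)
Your proof is correct and follows essentially the same route as the paper: the identity comes from differentiating under the integral sign and substituting the continuity equation $\partial_t\hat p=\partial_x\hat e$ of Corollary \ref{EC:timelike} followed by an integration by parts in $x$, and the boundedness of $\mathcal I(t)$ follows from $|\hat p|\le\hat e$ (Lemma \ref{LEM}) together with the finite-energy bound \eqref{finite_energy}, exactly as in the paper. One small remark: the direct computation of $\partial_t\bigl(\tfrac{x-vt}{\omega(t)}\bigr)$ actually produces the $\tfrac{\omega'(t)}{\omega(t)}$--term in \eqref{dt I} with a $+$ sign rather than the printed $-$ sign, so your assertion that "the signs match" is off on that one term; this is immaterial for everything downstream, since only $|\mathcal J_1|$ is ever used, and the discrepancy appears to be a typo in the stated identity rather than an error in your argument.
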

\begin{proof} The proof of \eqref{dt I} follows immediately from the Lemma \refeq{EC:timelike}. The proof of boundedness of $\mathcal I(t)$ goes as follows: from \eqref{virial}, the boundedness of $\rho$ and \eqref{p dom e},
\[
\left| \mathcal I(t) \right| \leq \int  |\rho| \left( \frac{x-vt}{\omega(t)}\right) |\hat p| (t,x)dx \lesssim \int  |\hat p| (t,x)dx \leq \int \hat e (t,x)dx,
\]
therefore from \eqref{finite_energy} we obtain $\sup_{t\geq 0} |\mathcal I(t)| <+\infty.$
\end{proof}

\subsection{Virial estimates} Now we are ready to use the previous identities. 

\medskip

We choose $\omega$ and $\rho$. Let $\omega(t)= const.$ or 
\begin{equation}\label{lambda}
\omega(t):= \frac{t}{\log^2 t}, \quad \quad \frac{ \omega'(t)}{\omega (t)} = \frac{1}{t}\left(1-\frac{2}{\log t} \right).
\end{equation}
and
\begin{equation}\label{rho_def}
\rho:= \tanh, \quad \rho' = \sech^2.
\end{equation}

\begin{thm}\label{estimacion}
	Let $\omega$ and $\rho$ be given as in \eqref{lambda}-\eqref{rho_def}. Assume that the solution $(\Lambda, \phi, \alpha)(t)$ of the system \eqref{sistema1} is such that $\alpha$ satisfies \eqref{p dom e} and the finite energy condition \eqref{finite_energy} is satisfied.
Then  we have the averaged estimate
	\begin{equation}\label{desg1}
		\int_2^{\infty} \frac{1}{\omega(t)}\int\sech^2\left(\frac{x-vt}{\omega(t)}\right)\hat e(t,x)dxdt \lesssim  1,
	\end{equation}
	Moreover, there exists an increasing sequence $t_n\to +\infty$ such that 
	\begin{equation}\label{desg2}		
	\lim_{n\longrightarrow +\infty} \int\sech^{2}\left(\frac{x-vt_n}{\omega(t_n)}\right)\hat e(t_n,x)dx=0.
	\end{equation}
\end{thm}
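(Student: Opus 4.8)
The plan is to integrate the virial identity \eqref{dt I} from Lemma \ref{Virial2} over a time interval $[2,T]$ and extract the coercive term, then let $T\to+\infty$. Since $\mathcal I(t)$ is bounded uniformly in time (Lemma \ref{Virial2}), the quantity $\mathcal I(T)-\mathcal I(2)=\int_2^T \tfrac{d}{dt}\mathcal I(t)\,dt$ is controlled by $2\sup_{t\ge 0}|\mathcal I(t)|<+\infty$, independently of $T$. Writing $y:=\tfrac{x-vt}{\omega(t)}$ and recalling $\rho'=\sech^2$, the last two terms on the right-hand side of \eqref{dt I} combine into $\tfrac{1}{\omega(t)}\int \sech^2(y)\big(\hat e+v\,\hat p\big)\,dx$, and by Lemma \ref{LEM} one has $|\hat p|\le\hat e$, whence $\hat e+v\,\hat p\ge (1-|v|)\hat e\ge 0$. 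Because $|v|<1$, this is a nonnegative, coercive multiple of exactly the quantity \eqref{desg1} we wish to bound.

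The hard part will be the transport term $-\tfrac{\omega'(t)}{\omega(t)}\int y\,\sech^2(y)\,\hat p\,dx$, since a crude estimate by $\tfrac{\omega'(t)}{\omega(t)}E[\Lambda,\phi;\alpha]\sim \tfrac1t$ is not time-integrable. Here I would exploit the precise choice $\omega(t)=t/\log^2 t$, for which $\omega'(t)\sim \log^{-2}t$ while $\tfrac1{\omega(t)}=\tfrac{\log^2 t}{t}$. First, $|\hat p|\le\hat e$ gives $\big|\int y\,\sech^2(y)\,\hat p\,dx\big|\le\int |y|\sech^2(y)\,\hat e\,dx$. Then I split according to $\{|y|\le R(t)\}$ and $\{|y|>R(t)\}$: on the first region $|y|\sech^2(y)\le R(t)\sech^2(y)$, while on the second the elementary bound $|y|\sech^2(y)\lesssim e^{-|y|}\le e^{-R(t)}$ together with $\int\hat e\,dx\le \sup_{t\ge0} E[\Lambda,\phi;\alpha]$ controls the tail. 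Choosing $R(t)=\delta\log^2 t$, the first contribution is
\[
\frac{\omega'(t)}{\omega(t)}R(t)\int\sech^2(y)\,\hat e\,dx\le \delta\,\frac{1}{\omega(t)}\int\sech^2(y)\,\hat e\,dx,
\]
using $\omega'(t)R(t)\le\delta$, an absorbable fraction of the coercive term, whereas the tail contributes $\lesssim \tfrac1t e^{-\delta\log^2 t}$, which is integrable on $[2,\infty)$ by super-polynomial decay.

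Collecting these estimates, after choosing $\delta$ small enough that $\delta\le (1-|v|)/2$, I obtain
\[
\frac{1-|v|}{2}\int_2^T\frac{1}{\omega(t)}\int\sech^2\!\left(\tfrac{x-vt}{\omega(t)}\right)\hat e(t,x)\,dx\,dt\le 2\sup_{t\ge0}|\mathcal I(t)|+C,
\]
with $C$ independent of $T$; letting $T\to+\infty$ yields \eqref{desg1}. Finally, \eqref{desg2} follows because the temporal weight has divergent mass, $\int_2^\infty\tfrac{dt}{\omega(t)}=\int_2^\infty\tfrac{\log^2 t}{t}\,dt=+\infty$: if $\int\sech^2(y)\,\hat e\,dx$ remained bounded below by some $c>0$ for all large $t$, then the integral in \eqref{desg1} would diverge, contradicting the bound just proved. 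Hence $\liminf_{t\to+\infty}\int\sech^2(y)\,\hat e\,dx=0$, and one extracts an increasing sequence $t_n\to+\infty$ along which the integral tends to $0$.
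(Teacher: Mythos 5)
Your proposal is correct and follows essentially the same route as the paper: integrate the virial identity of Lemma \ref{Virial2}, use $|\hat p|\le \hat e$ from Lemma \ref{LEM} together with the uniform bound on $\mathcal I(t)$, absorb a $\delta$-fraction of the transport term into the coercive term $\frac{1-|v|}{\omega(t)}\int\sech^2\hat e$, and deduce \eqref{desg2} from the non-integrability of $\omega^{-1}(t)$ on $[2,\infty)$. The only difference is in one technical step: where you split the spatial domain at $|y|=\delta\log^2 t$ to handle $\frac{|\omega'|}{\omega}\int |y|\sech^2(y)\,\hat e\,dx$, the paper applies Young's inequality directly, using $\sup_y y^2\sech^2(y)<\infty$ and the finite total energy to produce the time-integrable remainder $C_\delta/(t\log^2 t)$ --- both devices yield the same absorbable-plus-integrable structure.
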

In order to show Theorem \ref{estimacion}, we use the new Virial identity for \eqref{virial} presented
for the Einstein field equation \eqref{sistema1}.

\begin{proof}
On the other hand, recall that we are considering that $\alpha$ is a positive solution of the one-dimensional wave equation, with time-like gradient and with positive time derivative in all spacetime, therefore, we can use the Lemma \ref{LEM} and the Lemma \ref{Virial2}, then, we get
\[
  \frac{d}{dt}\mathcal{I}(t) =: \mathcal{J}_1+ \mathcal{J}_2+\mathcal{J}_3.
\]
First of all, we consider $\mathcal{J}_1$. If $\omega(t)$ is constant, there is nothing to prove. Assume now $\omega(t)$ given as in \eqref{lambda}. We have
\begin{equation*}
|\mathcal{J}_1|\leq \frac{|\omega'(t)|}{\omega (t)}\int \frac{|x-vt|}{\omega(t)} \rho'\left( \frac{x-vt}{\omega(t)}\right)|\hat p(t,x)|dx \leq \frac{|\omega'(t)|}{\omega (t)} \int \frac{|x-vt|}{\omega(t)} \rho'\left( \frac{x-vt}{\omega(t)}\right)\hat e(t,x)dx.
\end{equation*}
From the definition of $\omega(t)$ and using Cauchy's inequality for $\delta> 0$ small, we have: 
\begin{equation*}
\begin{aligned}
|\mathcal{J}_1| \leq &~{}  \frac{C_{\delta}\omega(t)}{t^2} \sup_{x \in \mathbb{R}}\left( \frac{(x-vt)^2}{\omega^2(t)}|\rho'|\left( \frac{x-vt}{\omega(t)}\right)\right)\int \hat e(t,x)dx\\
&~{}+{\color{black}\frac{\delta}{\omega(t)}\int \rho'\left( \frac{x-vt}{\omega(t)}\right) \hat e(t,x)dx}\\
\leq &~{} \dfrac{C}{t \log^2 t} + {\color{black}\frac{\delta}{\omega(t)}\int \rho'\left( \frac{x-vt}{\omega(t)}\right) \hat e(t,x)}dx.
\end{aligned}
\end{equation*}
Now,
\[
\left| \mathcal J_2(t)\right| \leq \frac{|v|}{\omega(t)}\int \rho'\left( \frac{x-vt}{\omega(t)}\right) |\hat p(t,x)| \leq \frac{|v|}{\omega(t)}\int \rho'\left( \frac{x-vt}{\omega(t)}\right) \hat e(t,x).
\]
Finally, $\mathcal J_3(t)$ does not need any bound at all. In any case, $\omega(t)= const.$ or $\omega(t)$ as in \eqref{lambda}, one has the following: if $\delta>0$ is small:
\begin{equation}\label{variacionI}
\begin{aligned}
\frac{d}{dt}\mathcal{I}(t) \geq &~{} \frac{1-|v|-\delta}{\omega(t)}\int  \rho'\left( \frac{x-vt}{\omega(t)}\right)\hat e(t,x) -\frac{C_{\delta}}{t\log^2 t}.
\end{aligned}
\end{equation} 
After integration in time  in \eqref{variacionI} and since the term $\frac{C}{t \log^2 t}$ integrates finite, we get \eqref{desg1} Finally,  \eqref{desg2} is obtain from \eqref{desg1} and the fact that $\omega^{-1}(t)$ is not integrable in $[2,\infty).$
\end{proof}

\subsection{Proof of the Theorem \ref{MT2} }
First of all, notice that the RHS in \eqref{CEtimelike} satisfies (with $h_1$ given in \eqref{h1})
\[
\left| 4\sinh^2(\Lambda)\left(\phi_x^2-\phi_t^2\right)+\Lambda_x^2-\Lambda_t^2 \right| \leq h_1.
\]
Using the the Lemma \refeq{LEM}, Lemma \refeq{EC:timelike}, \eqref{New_Estimation} and integration by part we have
\begin{equation*}
\begin{aligned}
& \left|\frac{d}{dt} \int \sech^4\left( \frac{x-vt}{\omega(t)}\right)\hat e(t,x)dx\right|\\
 &~{}  \leq \frac{|\omega'(t)|}{\omega(t)} \int   \frac{x-vt}{\omega(t)} |(\sech^4)'|\left( \frac{x-vt}{\omega(t)}\right)\hat e(t,x)dx\ + \frac{4|v|}{\omega(t)} \int \sech^4 \left( \frac{x-vt}{\omega(t)} \right)\hat e(t,x)dx \\
 &~{} \quad + \int \frac{1}{\omega(t)} \sech^4\left( \frac{x-vt}{\omega(t)}\right)  |\hat p| dx + \int \sech^4\left( \frac{x-vt}{\omega(t)}\right) h_1 dx\\
  &~{}\leq \frac{2|v| +1 +|\omega'(t)|}{\omega(t)}\int \sech^2\left( \frac{x-vt}{\omega(t)}\right)  \hat e(t,x)dx +\frac{2}{\omega(t)} \int \sech^4\left( \frac{x-vt}{\omega(t)}\right) \omega(t) \frac{\partial_t \alpha}{\alpha}\hat e(t,x) dx.
\end{aligned}
\end{equation*}
Finally, notice that from \eqref{eqns:alpha}, \eqref{u bar u} and \eqref{lambda},
\[
\omega(t)  \sech \left( \frac{x-vt}{\omega(t)}\right)  \frac{\partial_t \alpha}{\alpha}  \lesssim 1. 
\]
This estimation is possible since $\alpha_0'$ and $\alpha_1$ are compactly supported and Schwartz, respectively. Therefore, for every $n\geq 0$,
\[
\alpha \geq \frac12, \quad   \partial_t \alpha \lesssim_n \frac{1}{\varphi^{n}(u)} +\frac{1}{\varphi^{n}(\underline u)}, \quad u,\; \underline u \hbox{ as in \eqref{u bar u}}.
\]
Consequently, for $n$ sufficiently large but fixed,
\[
\begin{aligned}
\omega(t)  \sech \left( \frac{x-vt}{\omega(t)}\right)  \frac{\partial_t \alpha}{\alpha}  \lesssim  &~{} \omega(t)  \sech \left( \frac{x-vt}{\omega(t)}\right) \left( \frac{1}{\varphi^{n}(u)} +\frac{1}{\varphi^{n}(\underline u)} \right)\\
\lesssim  &~{} \frac{\omega(t) }{\varphi^n ((1-|v|)|t|)} +  \omega(t)  \sech \left( (1-|v|)\frac{|t|}{\omega(t)}\right)\\
  \lesssim &~{} 1+ \omega(t)  \sech \left( (1-|v|)\log^2 t\right) \lesssim \omega(t)  t^{-(1-|v|)\log t } \lesssim 1.
\end{aligned}
\]
We conclude that
\begin{equation*}
\begin{aligned}
 \left|\frac{d}{dt} \int \sech^4\left( \frac{x-vt}{\omega(t)}\right)\hat e(t,x)dx\right|  \lesssim  &~{} \frac{1}{\omega(t)}\int \sech^2\left( \frac{x-vt}{\omega(t)}\right)  \hat e(t,x)dx .
\end{aligned}
\end{equation*}
Then integrating in time for $t< t_n$, we have
\begin{equation*}
\begin{aligned}
&~{} \left|\int \sech^4\left( \frac{x-vt_n}{\omega(t)}\right)\hat e(t_n,x)dx- \int \sech^4\left( \frac{x-vt}{\omega(t)}\right)\hat e(t,x)dx\right| \\
&~{}\quad \quad \quad  \leq  \int_{t}^{t_n} \frac{1}{\omega(t)}\int \sech^2\left( \frac{x-vs}{\omega(t)}\right)\hat e(s,x)dxds.
\end{aligned}
\end{equation*}
sending $n \longrightarrow \infty$ and using \eqref{desg2}, we get
\begin{equation*}
\begin{aligned}
\left| \int \sech^4\left( \frac{x-vt}{\omega(t)}\right)\hat e(t,x)dx\right|\leq &~{} \int_{t}^{\infty} \frac{1}{\omega(t)}\int \sech^2\left( \frac{x-vs}{\omega(t)}\right)\hat e(s,x)dxds.
\end{aligned}
\end{equation*}
Now, sending $t\longrightarrow \infty$, we get
\[
 \lim_{t\longrightarrow \infty}\int \sech^4\left( \frac{x-vt}{\omega(t)}\right)\hat e(t,x)dx =0.
\]
Using again the definition of the $h_1$ and $h_2$ given in \eqref{h1}-\eqref{h2}, and the condition \eqref{condicion:timelike} that ensures $2|h_2| \leq h_1$, one has
\begin{equation}\label{decay22}
\hat e = -\kappa \partial_t \alpha h_1 +2\kappa \partial_x \alpha h_2 \geq (\partial_t \alpha -|\partial_x \alpha|)|\kappa | h_1.
\end{equation}
In addition, using that $\alpha>0$,  $\partial_t \alpha > 0$ and Lemma \refeq{condicionSigno0}, we estimate $|\kappa|(\partial_t \alpha-|\partial_x \alpha|)$ as follows
\begin{equation*}
\begin{aligned}
|\kappa|(\partial_t \alpha-|\partial_x \alpha|)= &~{} \frac{\alpha}{|(\partial_x \alpha)^2-(\partial_t \alpha)^2|}(\partial_t \alpha - |\partial_x \alpha|) \\
= &~{} \frac{\alpha}{|\partial_x \alpha| +\partial_t \alpha} \geq  \frac{1}{2}\frac{\alpha}{\partial_t \alpha}.
\end{aligned}
\end{equation*}
Therefore,
\begin{equation}\label{New_Estimation}
 \frac{1}{2}\frac{\alpha}{\partial_t \alpha} h_1 \leq  \hat e.
\end{equation}
Now, for $\delta$ sufficiently small and $|v|<1$, the first term in the right side of the equation \eqref{variacionI}  in the Theorem \refeq{estimacion}, can be estimated as follows
\begin{equation*}
\begin{aligned}
\frac{1-|v|-\delta}{\omega(t)}\int  \rho'\left( \frac{x-vt}{\omega(t)}\right)\hat e(t,x) 
&~{}  \gtrsim \frac{1}{\omega(t)} \int  \rho'\left( \frac{x-vt}{\omega(t)}\right)\frac{\alpha}{2\partial_t \alpha}h_1(t,x).
\end{aligned}
\end{equation*} 
Finally, from the hypothesis \eqref{uniform} we have $(\partial_t\alpha)^{-1}>c_0>0$, and from the inequalities \eqref{decay22} and \eqref{New_Estimation} we get the lower bound
\begin{equation*}
\begin{aligned}
 &~{}\int \sech^2\left( \frac{x-vt}{\omega(t)}\right)\left( (\partial_x \Lambda)^2+(\partial_t \Lambda)^2 +\sinh^2(\Lambda)((\partial_x \phi)^2+(\partial_t \phi)^2) \right)(t,x)dx\\
&~{}\quad \quad \quad \quad \quad \quad  \lesssim \int \sech^2\left( \frac{x-vt}{\omega(t)}\right)\hat e(t,x)dx,
\end{aligned}
\end{equation*}
which finally shows the validity of Theorem \ref{MT2} and the proof of \eqref{desg200}.

\begin{rem}
Notice that from \eqref{eqns:alpha} we have $\partial_t\alpha$ uniformly in the Schwartz class, and one has the lower bound $(\partial_t\alpha)^{-1}>c_0>0$. Indeed, 
\[
\frac{\alpha}{\partial_t \alpha} \gtrsim \frac{1}{\alpha_1 - |\alpha_0'|}\gtrsim 1.
\]
consequently, the solutions from Theorem \ref{GLOBAL0} satisfy Theorem \ref{MT2} as well.
\end{rem}

\section{Applications to gravitational solitons}\label{Sect:5}

 The purpose of this section is to analyze the dynamics of certain exact solutions to the Einstein field equations that can be derived from the Belinski-Zakaharov transform. 

\subsection{Generalized Kasner metric background}
We begin our analysis by considering vacuum cosmologies described by the Kasner-type model. The Kasner metric, being one of the first known exact solutions in relativistic cosmology, remains one of the most important exact solutions in GR. The generalized Kasner metric can be written in the diagonal form as follows:
\begin{equation}\label{stinterval2}
	ds^2=f_0(t,x)(dx^2-dt^2)+\alpha e^{u_0}dy^2+\alpha e^{-u_0}dz^2,
\end{equation}
where the function $u_0$ is given by
\begin{equation}\label{alfaalfa}
	u_0(t,x)=d \ln \alpha,
\end{equation}
and $d$ is an arbitrary parameter, the \textit{Kasner parameter}. It can be chosen either positive or negative, for instance $d=\pm 1$ corresponds to a region of Minkowski, $d=0$ is an LRS space with Petrov type metric D. The $x$ axis expands as time evolves if $|d|>1$ and contracts if $|d|<1$ \cite{belinski2001gravitational}. The original Kasner metric \cite{Kasner1921} is obtained by taking $\alpha=t$ (timelike) and describes an anisotropic universe without matter. The original Kasner's choice does not fit into the assumptions of Theorem \ref{GLOBAL0}, and will be studied elsewhere.

\medskip

In this work, we will assume that $d\geq 1$ to ensure the correct finite energy condition. Naturally one has
\[
\det g = \alpha^2, \quad g= \alpha\, \hbox{diag} \left( e^{u_0}, e^{-u_0} \right).
\]
As mentioned in the previous section, in order to identify the spacetime \eqref{stinterval2} with a cosmological model, the function $\alpha(t,x)$ must be globally timelike. If one compares (\refeq{stinterval2}) with (\refeq{diag1}),  we have that $\Lambda$ and $\phi$ should be given by 
\begin{equation}\label{lambKasner}
	\Lambda^{(0)}(t,x)=u_0, \quad \hbox{and} \quad  \phi^{(0)}=n\pi, \quad n\in \mathbb Z.
\end{equation}

%%%% Aquí podríamos incluir una sección que indique la demostración del Teorema
%%%%%%%%%

{\color{blue}  \subsection{Proof of the Theorem \ref{MT3}}\label{prub:MT3} In the following, we will present a set of results that will allow us to conclude the proof of the Theorem \ref{MT3}.
First of all, note that the Kasner-type seed metric satisfies the finite energy condition, provided that $\alpha$ satisfies the smallness conditions obtained from Theorem \ref{GLOBAL0}. Therefore, the result holds for this particular case:}
%%%%%%%%%%%%%%%

\begin{lem}
If the function $\alpha(t,x)$   satisfies the hypotheses of the Theorem \ref{GLOBAL0} with $|\tilde \alpha_0| < \alpha_1$ and $\partial_t \alpha > 0$, then, the Kasner-type seed solution $(\Lambda^{(0)},\phi^{(0)})$ of the \eqref{sistema1} has finite nonnegative energy.
\end{lem}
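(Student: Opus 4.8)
The plan is to exploit the very simple structure of the seed solution. Since $\phi^{(0)}=n\pi$ is constant by \eqref{lambKasner}, all derivatives of $\phi$ vanish, so every $\phi$-contribution in the densities $h_1,h_2$ from \eqref{h1_0} disappears. First I would compute, from $\Lambda^{(0)}=u_0=d\ln\alpha$ (see \eqref{alfaalfa}), that $\partial_t\Lambda^{(0)}=d\,\partial_t\alpha/\alpha$ and $\partial_x\Lambda^{(0)}=d\,\partial_x\alpha/\alpha$, whence
\[
h_1=d^2\,\frac{(\partial_t\alpha)^2+(\partial_x\alpha)^2}{\alpha^2},\qquad h_2=d^2\,\frac{\partial_t\alpha\,\partial_x\alpha}{\alpha^2}.
\]

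For the nonnegativity I would first record that $\alpha$ is timelike: the hypotheses $|\tilde\alpha_0'|<\alpha_1$ and $\partial_t\alpha>0$ give, via Lemma \ref{condicionSigno0}(ii), the bound $|\partial_x\alpha|<\partial_t\alpha$, i.e. $(\partial_x\alpha)^2-(\partial_t\alpha)^2<0$, which is exactly the cosmological condition \eqref{condicion:timelike}. Together with $\alpha>0$ this places us under the hypotheses of Lemma \ref{positivity}, and therefore $\hat e=-\tilde e\ge 0$ at once. For the finiteness it is cleanest to simplify $\hat e$ explicitly: substituting the expressions for $h_1,h_2$ above into $\hat e=-\tilde e=-\kappa\partial_t\alpha\,h_1+2\kappa\partial_x\alpha\,h_2$ (from \eqref{eq:energiaN_new} and \eqref{e_timelike_0}) and using $\kappa\big[(\partial_x\alpha)^2-(\partial_t\alpha)^2\big]=\alpha$ from \eqref{gamma}, the cross terms combine and everything collapses to
\[
\hat e=d^2\,\frac{\partial_t\alpha}{\alpha}\ \ge\ 0,
\]
recovering nonnegativity a second time and reducing the energy to $E=\int\hat e\,dx=d^2\int(\partial_t\alpha/\alpha)\,dx$.

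It then remains to bound this integral. I would use the uniform lower bound $\alpha\ge c_0>0$, a consequence of the smallness conditions on $\tilde\alpha_0,\alpha_1$ in Theorem \ref{GLOBAL0} together with the D'Alembert formula \eqref{eqn:alpha_DA}, to get $E\le (d^2/c_0)\int\partial_t\alpha\,dx$. To control $\int\partial_t\alpha\,dx$ uniformly in time, I would insert the explicit expression for $\partial_t\alpha$ from \eqref{eqns:alpha} and change variables $x\mapsto t\pm x$: the two $\tilde\alpha_0'$ contributions integrate to zero because $\tilde\alpha_0\in C_c^\infty$ forces $\int\tilde\alpha_0'=0$, while the two $\alpha_1$ contributions each yield $\int\alpha_1$, so that $\int\partial_t\alpha\,dx=\|\alpha_1\|_{L^1}$, a finite constant independent of $t$ (here $\alpha_1>0$ and $\alpha_1\in\mathcal S(\mathbb R)$). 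Hence $E\le (d^2/c_0)\|\alpha_1\|_{L^1}<\infty$ for every $t$, which gives finite, indeed uniformly bounded, nonnegative energy.

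The argument is essentially computational, and the only genuinely delicate points are the uniform lower bound $\alpha\ge c_0$ and the $t$-independence of $\int\partial_t\alpha\,dx$; both follow from the explicit D'Alembert representation \eqref{eqn:alpha_DA}--\eqref{eqns:alpha} and the compact support, respectively Schwartz decay, of the data, so no real analytic difficulty arises beyond careful bookkeeping. I expect the main place to be cautious is the algebraic simplification of $\hat e$, where one must keep track of the signs entering through $\kappa$ and the timelike condition to land on the clean identity $\hat e=d^2\partial_t\alpha/\alpha$.
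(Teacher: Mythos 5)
Your proof is correct and follows essentially the same route as the paper: compute $\partial_t\Lambda^{(0)}=d\,\partial_t\alpha/\alpha$, $\partial_x\Lambda^{(0)}=d\,\partial_x\alpha/\alpha$, collapse the energy density to $\hat e_0=d^2\,\partial_t(\ln\alpha)$ using $\kappa\big[(\partial_x\alpha)^2-(\partial_t\alpha)^2\big]=\alpha$, and read off nonnegativity from the timelike condition and $\partial_t\alpha>0$. Your finiteness step via $\int\partial_t\alpha\,dx=\|\alpha_1\|_{L^1}$ and $\alpha\ge c_0$ is in fact slightly more explicit than the paper's, which simply invokes the decay bounds \eqref{Llogalpha} on $L(\ln\alpha)$ and $\underline{L}(\ln\alpha)$.
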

\begin{proof}
 The energy density  proposed in \eqref{e_timelike_0}, in this case, has the following structure:
\begin{equation*}
	\begin{aligned}\label{energykasner}
		\hat e_0=&-\frac{\alpha \partial_t \alpha}{(\partial_x\alpha)^2-(\partial_t\alpha)^2}\left[(\partial_x \Lambda^{(0)})^2+(\partial_t \Lambda^{(0)})^2 \right]+\frac{2\alpha \partial_x \alpha}{(\partial_x \alpha)^2-(\partial_t \alpha)^2}\partial_x\Lambda^{(0)}\partial_t\Lambda^{(0)}.
	\end{aligned}
\end{equation*}
 Using (\refeq{lambKasner}) we get:
\begin{equation*}
	\begin{cases*}
		\partial_x \Lambda^{(0)}= \dfrac{d\partial_x \alpha}{\alpha},\\
		\partial_t \Lambda^{(0)}= \dfrac{d\partial_t \alpha}{\alpha}.
	\end{cases*}
\end{equation*}
Then, since $(\partial_x \alpha)^2 -(\partial_t \alpha)^2  <0$ we can simplify the expression \eqref{energykasner} as:
\begin{equation}
	\begin{aligned}\label{kasner0}
		\hat e_0&= \frac{d^2\alpha\partial_t \alpha}{(\partial_x \alpha)^2-(\partial_t \alpha)^2}\left( \frac{(\partial_x \alpha)^2-(\partial_t \alpha)^2}{\alpha^2}\right)=d^2\partial_t(\ln \alpha).
	\end{aligned}
\end{equation}
Notice that $\hat e_0$ is nonnegative and well-defined thanks to the timelike condition on $\alpha$.  And a similar way, we have the following  momentum density:
\begin{equation*}
	\hat p_0=d^2\partial_x(\ln \alpha).
\end{equation*}
Given the definition of $\partial_t \alpha$ in terms of initial conditions $\tilde \alpha_0, \alpha_1,$  and using \eqref{Llogalpha},  we can conclude that the energy \eqref{kasner0} corresponding to this background metric is finite, i.e. 
\[
E[\Lambda^{(0)},\phi^{(0)}; \alpha](t)=\int \hat e_0dx=\int \frac{1}{2}[L(\ln \alpha)+\underline{L}(\ln \alpha)]dx <\infty.
\]
The proof is complete.
\end{proof}
Theorems \ref{GLOBAL0} and \ref{MT2} imply in this case that  for any $|v|<1$ and $\omega(t)=t (\log t)^{-2}$, 
\[
\lim_{t\to +\infty }\int_{|x-vt| \leq \omega(t)}  \frac1{\alpha^2}\left(\alpha_t^2 + \alpha_x^2\right) (t,x) dx =0, 
\]
as naturally expected for solutions of the 1D wave equation. What is more interesting is the case of 1-soliton solutions.

\begin{rem}
As we can notice, until now it has been enough to impose certain constraints on the function $\alpha$, to understand how we must define the energy in each spacetime and to understand how the solution of the system behaves in long time. 
At this point, it is important to emphasize that, in the framework of the inverse scattering theory for the Einstein equation, proposed by Belinski and Zakharov, in addition to the function $\alpha(t,x)$, it is necessary to introduce its conjugate derivative $\beta(t,x)$, related to $\alpha(t,x)$ by the identities 
\[
\partial_t\beta=\partial_x\alpha, \quad \mbox{and} \quad  \partial_x\beta=\partial_t\alpha,
\]
$\beta$ is introduced with the aim of describing the 1-soliton solution. In the setting of Theorem \ref{GLOBAL0}, from \eqref{eqn:alpha_DA} one sees that this function is given by
\begin{equation}\label{betaeq}
\beta(t,x):= C+\tilde\alpha_0(2u)-\tilde \alpha_0(-2\underline{u})+\int_{0}^{2u}\alpha_1(s)ds+\int_{0}^{-2\underline{u}}\alpha_1(s)ds, \quad C\in\mathbb R.
\end{equation}
$\beta(t,x)$ is a second independent solution of the one-dimensional wave equation, and will be automatically spacelike in our setting. Indeed, from \eqref{u bar u},
\[
\beta_t = \tilde\alpha_0'(2u) + \tilde \alpha_0'(-2\underline{u})+\alpha_1(2u) - \alpha_1(-2\underline{u}),
\]
and 
\[
\beta_x =   \tilde\alpha_0'(2u) - \tilde \alpha_0'(-2\underline{u}) + \alpha_1(2u) + \alpha_1(-2\underline{u}) >0.
\]
Consequently,
\[
\beta_x - \beta_t = -2\tilde\alpha_0'(-2\underline u)  + 2\alpha_1(-2\underline u) >0, 
\]
and
\[
\beta_x + \beta_t =  2\tilde \alpha_0'(2u) + 2\alpha_1(2u ) >0. 
\]

Belinski and Zakharov postulate that there is a smooth, one-to-one, surjective mapping between $t,x$ and $\alpha, \beta$, see \cite{Belinsky}. 
\end{rem}

In the setting of Theorem \ref{GLOBAL0}, it is clear that $\beta$ defines a bounded function in spacetime. Additionally,
\[
\lim_{t\to +\infty }\int_{|x-vt| \leq \omega(t)}  \frac1{\beta^2}\left(\beta_t^2 + \beta_x^2\right) (t,x) dx =0, 
\]
\subsubsection{One Soliton Solution}
Belinski and Verdaguer \cite[p. 47]{belinski2001gravitational} introduced the one soliton solution with \textit{Kasner background}. Let $w \in\mathbb R$ be a fixed parameter. Let $\mu$ be
\begin{equation}\label{mu}
	\mu:=w-\beta -\sqrt{(w-\beta)^2-\alpha^2},
\end{equation}
where $\beta$ solves \eqref{betaeq}, namely $\partial_t\beta=\partial_x\alpha$. Then the 1-soliton with Kasner background is given by
\begin{equation}\label{g1}
	g^{(1)}=\frac{1}{\mu \cosh(\rho)}\left[\begin{array}{cc}
		e^{\Lambda^{(0)}}(\mu^2e^{\rho}+\alpha^2e^{-\rho}) & \alpha^2-\mu^2\\
		\alpha^2-\mu^2 & e^{-\Lambda^{(0)}}(\alpha^2e^{\rho}+\mu^2e^{-\rho})
	\end{array} \right],
\end{equation}
where
\begin{equation*}
\begin{aligned}
	& \rho=d\ln\left(\frac{\mu}{\alpha}\right)+C, \quad C\in \mathbb{R},\\
	& f=f^{(0)}\alpha^{1/2}\mu\cosh(\rho)(\alpha^2-\mu^2)^{-1}.
\end{aligned}	
\end{equation*}
Some important remarks are in order. First, from \eqref{mu} one can see that if $w$ is sufficiently large, $\mu$ is real-valued. Assume $w>0$ sufficiently large such that $\mu$ is real valued and positive. For the purposes of this work, we need a further simplification of \eqref{g1}. Assuming for simplicity $C=0$ in $\rho$, after some computations we get
\begin{equation}\label{eq:ref_1}
{\color{blue}e^{\Lambda^{(0)}} = \alpha^d}, \quad e^\rho = \left(\frac{\mu}{\alpha}\right)^d, \quad e^{-\rho} =  \left(\frac{\mu}{\alpha}\right)^{-d}, \quad  \cosh \rho = \frac12\left(  \left(\frac{\mu}{\alpha}\right)^d + \left(\frac{\mu}{\alpha}\right)^{-d} \right),
\end{equation}
and for 
\begin{equation}\label{m}
m:= \frac{\mu}{\alpha} = \frac1{\alpha}( w-\beta -\sqrt{(w-\beta)^2-\alpha^2}),
\end{equation}
one obtains
\begin{equation}\label{g1_new}
\begin{aligned}
	g^{(1)}= &~{} \frac{2}{\mu \left(  \left(\frac{\mu}{\alpha}\right)^d + \left(\frac{\mu}{\alpha}\right)^{-d} \right)}\left[\begin{array}{cc}
		\alpha^d \left(\mu^2\left(\frac{\mu}{\alpha}\right)^d+\alpha^2\left(\frac{\mu}{\alpha}\right)^{-d} \right) & \alpha^2-\mu^2\\
		\alpha^2-\mu^2 & \alpha^{-d}\left(\alpha^2\left(\frac{\mu}{\alpha}\right)^d+\mu^2\left(\frac{\mu}{\alpha}\right)^{-d}\right)
	\end{array} \right]\\
	= &~{} \frac{2 \alpha }{  m^{d} +m^{-d}}\left[\begin{array}{cc}
		\alpha^d  \left(m^{d+1}+m^{-d-1} \right) & \frac1m -m \\
		\frac1m -m  & \alpha^{-d}\left(m^{d - 1} + m^{-d + 1}\right)
	\end{array} \right].
\end{aligned}	
\end{equation}
A first glimpse of the $g^{(1)}$ reveals that it will behave closely to the functions $\alpha$ and $\beta$. In this sense, we can say that the associated propagation speed must coincide with a support on the light cone. Comparing \eqref{g1_new} with (\refeq{diag1}), we find that
\[
\begin{aligned}
\cosh{\Lambda} +\cos 2\phi \sinh{\Lambda}  = & ~{}\frac{2\alpha^d  \left(m^{d+1}+m^{-d-1} \right)}{m^{d} +m^{-d}} \\
 \cosh{\Lambda}-\cos 2\phi \sinh{\Lambda} = & ~{} \frac{2\alpha^{-d}  \left(m^{d-1}+m^{-d+1} \right)}{m^{d} +m^{-d}}\\
 \sin 2\phi \sinh{\Lambda} =&~{}\frac{2 \left(m^{-1} -m \right)}{m^{d} +m^{-d}}.
 \end{aligned}
\]

and therefore
\begin{equation}\label{Kasner1solitonLambda}
\cosh \Lambda = \frac{\alpha^d  \left(m^{d+1}+m^{-d-1} \right)+ \alpha^{-d}  \left(m^{d-1}+m^{-d+1} \right)}{m^{d} +m^{-d}},
\end{equation}
\begin{equation}\label{Kasner1solitonPhi}
\tanh 2\phi = \frac{2 \left(m^{-1} -m \right)}{\alpha^d  \left(m^{d+1}+m^{-d-1} \right)-\alpha^{-d}  \left(m^{d-1}+m^{-d+1} \right)}.
\end{equation}
Since $\alpha,m>0$ by hypothesis and $a+\frac1a\geq 1$ for $a>0$, we get
\[
\begin{aligned}
{\color{blue}\cosh(\Lambda)}&~{} =\frac{\alpha^d  \left(m^{d+1}+m^{-d-1} \right)+ \alpha^{-d}  \left(m^{d-1}+m^{-d+1} \right)}{m^{d} +m^{-d}} \\
&~{} =  \frac{(\alpha^{d}m + \alpha^{-d}m^{-1})m^d  +( \alpha^{d}m^{-1} + \alpha^{-d}m )m^{-d} }{m^{d} +m^{-d}} \geq 1.
\end{aligned}
\]

Using that $\sinh(\hbox{arccosh} x)=\sqrt{x^2-1}$ for $|x|\geq 1$, we get
\begin{equation}\label{SH}
\sinh^2 \Lambda = \left(  \frac{(\alpha^{d}m + \alpha^{-d}m^{-1})m^d  +( \alpha^{d}m^{-1} + \alpha^{-d}m )m^{-d} }{m^{d} +m^{-d}} \right)^2-1.
\end{equation}
As a first application, we use Theorem \ref{GLOBAL0} to provide the following global existence result:

\begin{cor}\label{cor5p1}
Under the smallness hypotheses on $\alpha$ from Theorem \ref{GLOBAL0}, suitable perturbations of the 1-soliton with Kasner metric background \eqref{g1_new} are globally defined.
\end{cor}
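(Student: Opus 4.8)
The plan is to derive Corollary \ref{cor5p1} as a direct application of Theorem \ref{GLOBAL0}: I would regard the $1$-soliton \eqref{g1_new} as prescribing Cauchy data for the system \eqref{sistema1}, write the soliton field in the form $\Lambda=\lambda+\tilde\Lambda$ around the constant background singled out in \eqref{lambKasner}, and then verify that a suitably perturbed version of $(\phi,\tilde\Lambda,\partial_t\phi,\partial_t\tilde\Lambda)|_{t=0}$ satisfies the initial conditions \eqref{Ciniciales}. Once this is in place, global existence and smoothness follow at once from Theorem \ref{GLOBAL0}.

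First I would settle regularity. Choosing $w>0$ large enough that $\mu$ in \eqref{mu} is real and strictly positive, and using $\alpha\ge\tfrac12>0$ from the hypotheses of Theorem \ref{GLOBAL0}, the ratio $m=\mu/\alpha$ is a strictly positive smooth function of $(t,x)$; since $\beta$ is bounded in spacetime, $m$ in fact ranges over a compact subinterval of $(0,\infty)$. The bound $\cosh\Lambda\ge1$ established just before the statement shows that \eqref{Kasner1solitonLambda} and \eqref{SH} define a smooth $\Lambda$, and \eqref{Kasner1solitonPhi} a smooth $\phi$; hence $(\Lambda,\phi)$ is a genuine smooth solution of \eqref{sistema1} built on an admissible $\alpha$, with $f$ positive and globally defined through \eqref{g1}.

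The decisive point will be the smallness of the dynamical gradients. Differentiating \eqref{Kasner1solitonLambda}--\eqref{Kasner1solitonPhi} by the chain rule, each of $\partial\Lambda$ and $\partial\phi$ is a smooth function of $(\alpha,m)$ times the derivatives $\partial\alpha,\partial\beta$, because $\partial m$ is expressible through $\partial\alpha,\partial\beta$ via \eqref{mu} together with $\partial_t\beta=\partial_x\alpha$ and $\partial_x\beta=\partial_t\alpha$; the prefactors are bounded since $m$ stays in a compact positive interval for fixed $w$. Invoking \eqref{eqns:alpha}, \eqref{Llogalpha} and the companion bounds for $\beta$ derived in the preceding remark, I would conclude that $\partial\Lambda,\partial\phi$ are of order $\gamma$ (with constants depending on the fixed parameters $w,d$) and decay like $\varphi^{-3/4}$. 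In particular $\partial_x\Lambda,\partial_x\phi\in L^2$, so $\Lambda\in\dot H^1$ and the weighted energies \eqref{energies} are finite and small; the gradient of the soliton thus inherits precisely the smallness that drives the bootstrap in Theorem \ref{thm_aux}.

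To finish I would fix $\lambda$ as a reference asymptotic value of $\Lambda$ (obtained from $w,C,d$ as $\alpha\to1$), set $\tilde\Lambda:=\Lambda-\lambda$, and take the \emph{suitable perturbation} to be a small compactly supported correction, of size $\varepsilon$, added to the soliton Cauchy data so that the resulting $(\phi_0,\tilde\Lambda_0,\phi_1,\tilde\Lambda_1)$ land in the exact classes of \eqref{Ciniciales} while respecting \eqref{condicionlambda}. The compatibility conditions in item $(4)$ of Theorem \ref{GLOBAL0} are automatic, since \eqref{g1_new} solves Einstein's equations by the Belinski--Zakharov construction. The hard part will be exactly this matching: $\Lambda$ depends nonlinearly on $\alpha$ and $\beta$, and the second solution $\beta$ in \eqref{betaeq} carries distinct limits at $x\to\pm\infty$ (differing by $2\int_{\mathbb R}\alpha_1$), so no single $\lambda$ renders $\tilde\Lambda$ compactly supported. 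What rescues the argument is that the global scheme only requires smallness of the gradients $\partial\tilde\Lambda,\partial\phi$ in the weighted norms \eqref{energies} together with the pointwise bound \eqref{condicionlambda}, both of which I have already controlled by order $\gamma\,\varphi^{-3/4}$; the $\varepsilon$-perturbation serves only to place the data inside the precise function spaces of \eqref{Ciniciales} without disturbing these smallness properties, after which Theorem \ref{GLOBAL0} delivers the global smooth solution.
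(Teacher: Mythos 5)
Your proposal follows the same route as the paper: both reduce the corollary to checking the hypotheses of Theorem \ref{GLOBAL0} for the soliton data, using that $\alpha$, $\beta$ and hence $m=\mu/\alpha$ stay in compact positive ranges and converge to constants, so that $\Lambda=\lambda+\tilde\Lambda$ and $\phi$ from \eqref{Kasner1solitonLambda}--\eqref{Kasner1solitonPhi} carry small gradients controlled by $\partial\alpha,\partial\beta$, after which Theorem \ref{GLOBAL0} applies to suitably small perturbations. Your write-up is in fact more detailed than the paper's one-paragraph argument, and the compact-support obstruction for $\tilde\Lambda_0$ that you flag (and resolve by observing that the bootstrap only uses the weighted energies \eqref{energies} and the pointwise bound \eqref{condicionlambda}) is a genuine subtlety that the paper's proof passes over silently.
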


Additionally, it is not difficult to realize that $(\Lambda,\phi,\alpha)$ {\color{blue} induce globally} defined finite energy solutions. Consequently, Theorem \ref{MT2} allows us to conclude

\begin{cor}\label{cor5p2}
Under the hypothesis on $\alpha$ obtained from Theorem \ref{GLOBAL0}, $g^{(1)}$ in the form $(\Lambda, \phi,\alpha)$ satisfies the assumptions in Theorem \ref{MT2}, and consequently 
\[
\lim_{t\to +\infty }\int_{|x-vt| \leq \omega(t)}  \left(\Lambda_t^2 +\Lambda_x^2 + \sinh^2(\Lambda) (\phi_t^2 + \phi_x^2)\right) (t,x) dx =0.
\]
\end{cor}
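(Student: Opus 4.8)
The plan is to verify the three hypotheses of Theorem \ref{MT2} for the one-soliton $(\Lambda,\phi,\alpha)$ — that it solves \eqref{sistema1} with $\alpha$ cosmological, that \eqref{uniform} holds, and that the finite-energy bound \eqref{condition00} holds — and then quote the theorem verbatim. By the Belinski--Zakharov construction, $g^{(1)}$ in \eqref{g1_new} solves the reduced Einstein equation \eqref{eq:PCE2} with $\det g^{(1)}=\alpha^2$, so the pair $(\Lambda,\phi)$ read off from \eqref{Kasner1solitonLambda}--\eqref{Kasner1solitonPhi}--\eqref{SH} is a smooth solution of \eqref{sistema1}. That $\alpha$ is cosmological and satisfies \eqref{uniform} is exactly the standing hypothesis on $\alpha$ inherited from Theorem \ref{GLOBAL0} (together with $|\tilde\alpha_0'|<\alpha_1$ and $\partial_t\alpha>0$): the lower bound $\alpha\ge c_0$ and the uniform Schwartz decay of $\partial_t\alpha$ follow from \eqref{eqns:alpha} and the closing Remark of Section \ref{Sect:5b}. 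Thus everything reduces to \eqref{condition00}.

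For the energy I would first rewrite $\hat e$ in null derivatives. Introducing $g_0:=g^{(1)}/\alpha$, so that $\det g_0=1$ and $g_0$ is the matrix in \eqref{diag1} with the prefactor $\alpha$ removed, a direct computation of the Maurer--Cartan form $\partial g_0\,g_0^{-1}$ gives the sigma-model identity $(\partial\Lambda)^2+4\sinh^2\Lambda\,(\partial\phi)^2=\tfrac12\Tr\big((\partial g_0\,g_0^{-1})^2\big)$ for each first-order derivative $\partial$. Combining this with \eqref{e_timelike_0}, \eqref{eq:energiaN_new} and the identity $\kappa=-\alpha/(L\alpha\,\underline L\alpha)$ (from \eqref{gamma} and \eqref{L Lbar}, valid on the cosmological region where $L\alpha,\underline L\alpha>0$) yields the manifestly clean form
\[
\hat e=\frac{\alpha}{4}\left[\frac{\Tr\big((L g_0\,g_0^{-1})^2\big)}{L\alpha}+\frac{\Tr\big((\underline L g_0\,g_0^{-1})^2\big)}{\underline L\alpha}\right].
\]

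Next I would exploit that all the soliton data are smooth functions of the two wave fields $\alpha$ and $\beta$, where $\beta$ solves \eqref{betaeq} with $\partial_t\beta=\partial_x\alpha$, $\partial_x\beta=\partial_t\alpha$; these relations give the crucial null identities $L\beta=L\alpha$ and $\underline L\beta=-\underline L\alpha$. Writing $g_0=g_0(\alpha,\beta)$, the chain rule then gives $L g_0=(\partial_\alpha g_0+\partial_\beta g_0)\,L\alpha$ and $\underline L g_0=(\partial_\alpha g_0-\partial_\beta g_0)\,\underline L\alpha$. For $d\ge1$ and $w$ large, $\mu$ from \eqref{mu} is real and positive and $m=\mu/\alpha=\alpha\big[(w-\beta)+\sqrt{(w-\beta)^2-\alpha^2}\big]^{-1}$ ranges over a compact subinterval of $(0,1]$ bounded away from $0$; hence by \eqref{Kasner1solitonLambda}--\eqref{SH} the entries of $g_0$, their $(\alpha,\beta)$-derivatives, and $g_0^{-1}$ (whose norm is controlled by the bounded quantity $e^{|\Lambda|}$) are all uniformly bounded. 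Therefore $\Tr\big((L g_0\,g_0^{-1})^2\big)\lesssim(L\alpha)^2$ and similarly for $\underline L$, which gives the pointwise bound
\[
0\le\hat e\lesssim \alpha\,(L\alpha+\underline L\alpha)\lesssim \frac{1}{\varphi^{3/4}(u)}+\frac{1}{\varphi^{3/4}(\underline u)},
\]
where the last step uses \eqref{eqns:alpha}, \eqref{Llogalpha} and \eqref{varphi}. Integrating in $x$ and using that $\varphi^{-3/4}$ is integrable yields $\sup_{t\ge0}E[\Lambda,\phi;\alpha](t)<\infty$, i.e.\ \eqref{condition00}, after which Theorem \ref{MT2} applies directly.

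The main obstacle is the boundedness claim in the third step: one must check that none of the coefficients $\partial_\alpha g_0$, $\partial_\beta g_0$, $g_0^{-1}$ degenerates, i.e.\ that $m$ stays in a compact subset of $(0,\infty)$ and that $\Lambda$ (hence $\sinh^2\Lambda$) remains bounded. This is precisely where $d\ge1$ is used: it forces the exponents $m^{d-1}$ appearing in \eqref{Kasner1solitonLambda}--\eqref{SH} to be nonnegative powers, so they stay bounded as $m$ approaches the endpoints of its range. The null rewriting of $\hat e$ is then what converts this algebraic boundedness into the integrable decay $\varphi^{-3/4}$, by cancelling the apparent singularity of $\kappa$ against the vanishing of $L\alpha,\underline L\alpha$.
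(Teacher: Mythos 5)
Your proof is correct, and it is considerably more detailed than the one in the paper. The paper's own argument is a two-sentence verification: it assumes \eqref{cosmological type} and \eqref{uniform} outright and then asserts that the finite-energy condition \eqref{condition00} is ``clear'' because every squared derivative of $\Lambda,\phi$ in \eqref{Kasner1solitonLambda}--\eqref{Kasner1solitonPhi} reduces, by the chain rule, to squared derivatives of $\alpha$, $\mu$ and $\beta$, which have finite energy, with \eqref{SH} controlling the $\sinh^2\Lambda$ factor. What your argument adds --- and what the paper's sketch leaves implicit --- is the treatment of the weight $\kappa$, which is \emph{not} bounded: since $(\partial_x\alpha)^2-(\partial_t\alpha)^2=-L\alpha\,\underline{L}\alpha\to 0$ as $|x|\to\infty$ on each time slice, the naive bound ``bounded coefficients times $L^1$ quantities'' does not close by itself. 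Your null factorization
\[
\hat e=\frac{\alpha}{4}\left[\frac{\Tr\big((L g_0\,g_0^{-1})^2\big)}{L\alpha}+\frac{\Tr\big((\underline{L} g_0\,g_0^{-1})^2\big)}{\underline{L}\alpha}\right],
\]
combined with the sigma-model trace identity, the relations $L\beta=L\alpha$, $\underline{L}\beta=-\underline{L}\alpha$, and the chain rule in $(\alpha,\beta)$, cancels this apparent singularity exactly and yields the pointwise envelope $\hat e\lesssim\varphi^{-3/4}(u)+\varphi^{-3/4}(\underline{u})$, which is uniform in time and integrable, hence stronger than \eqref{condition00}. I checked the trace identity, the null-variable formula for $\hat e$ (it follows from $h_1=\tfrac12(A+B)$, $h_2=\tfrac14(A-B)$ with $A,B$ the two null traces, together with $\kappa=-\alpha/(L\alpha\,\underline{L}\alpha)$), and the boundedness of $m$, $g_0^{\pm1}$ and their $(\alpha,\beta)$-derivatives for $w$ large; all are correct. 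Your route is in fact the natural generalization of the paper's background computation \eqref{kasner0}, where the same cancellation produces $\hat e_0=d^2\partial_t(\ln\alpha)$. One small remark: since $m$ stays in a compact subset of $(0,\infty)$ for $w$ large, both positive and negative powers $m^{\pm(d\pm1)}$ are harmless, so the restriction $d\ge1$ is not really what saves the boundedness step in your third paragraph; your estimate goes through for any fixed $d$ on that range.
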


Both corollaries complete the proof of Theorem \ref{MT3}. 

\begin{proof}[Proof of Corollary \ref{cor5p1}]
We have to verify the hypotheses in Theorem \ref{GLOBAL0}. Indeed, notice that {\color{blue} under the smallness assumptions on $\alpha$ and} from \eqref{betaeq} (by choosing $C=1$)
\[
\alpha = 1 + \tilde \alpha_0, \quad \beta= 1 + \tilde \beta_0.
\]
{\color{blue} Similarly,  taking into account the identities given in \eqref{eq:ref_1} and the definition of $m$ given by \eqref{m},  it follows that the dynamics of $m$ are completely determined by $\alpha$   (in terms of $\tilde \alpha_0, \alpha_1$), then,  we get that $m$ has the same asymptotic behavior, converging to a constant as space tends to infinity}. 
{\color{teal}  Indeed, we can rewritte $m$ in \eqref{m} as follows: 
\[
m= \frac{\alpha}{s+\sqrt{s^2-\alpha^2}}, \qquad \mbox{with} \qquad s(t,x):= w- \beta(t,x),
\]
then, since $\beta$ is a bounded function, constant after some large distance, there exists $s_0>0$ such that 
\begin{equation*}
m(t,x)=\frac{1+\tilde{\alpha}}{s+\sqrt{s^2-\alpha^2}}\to \frac{1+\tilde \alpha}{s_0}, \quad \hbox{as} \quad |x|\to +\infty.
\end{equation*}
When $t=0$ we have $m(0,x)\leq C\frac{1+\tilde \alpha_0}{s_0},$ then $m(0,x)=\mathcal{O}(1/s_0)$. Let us see that $\Lambda$ and $\phi$ satisfy the conditions of smallness required by Theorem \ref{GLOBAL0}. First we define
\[\mathcal{G}(\alpha,m):=  \frac{(\alpha^{d}m + \alpha^{-d}m^{-1})m^d  +( \alpha^{d}m^{-1} + \alpha^{-d}m )m^{-d} }{m^{d} +m^{-d}}, \]
then using the Taylor expansion  at the point $(1,1/s_0)$, we can write 
\begin{equation}\label{T1}
\mathcal{G}(\alpha,m)=\mathcal{G}(1,1/s_0)+\partial_{\alpha}\mathcal{G}(1,1/s_0)\tilde \alpha+\partial_{m}\mathcal{G}(1,1/s_0)(m-1/s_{0})+\mathcal{O}((\tilde \alpha)^2+(\delta m)^2),
\end{equation}
with $\delta m =m-1/s_0$. Next, from \eqref{Kasner1solitonLambda} and using the Taylor expansion  at the point $\varrho:= \mathcal{G}(1,1/s_0)$, we can rewrite $\Lambda$ as:
\[%\begin{equation}
\Lambda = \cosh^{-1}(\varrho)+\frac{1}{\sqrt{\varrho^2-1}}(\mathcal{G}-\varrho)+\mathcal{O}((\mathcal{G}-\varrho)^2).
\]%end{equation}
Using \eqref{T1}-\eqref{SH}, we obtain 
\begin{equation}\label{L:tilde}
\Lambda= \cosh^{-1}(\varrho)+\frac{\partial_{\alpha}\mathcal{G}(1,1/s_0)\tilde \alpha+\partial_{m}\mathcal{G}(1,1/s_0)\delta m}{\sinh(\hbox{arccosh}  (\varrho))}+\mathcal{O}(\delta^2).
\end{equation}
Therefore, if $\lambda=  \cosh^{-1}(\varrho)$ and 
\[
\tilde \Lambda (t,x):=  \frac{\partial_{\alpha}\mathcal{G}(1,1/s_0)\tilde \alpha+\partial_{m}\mathcal{G}(1,1/s_0)\delta m}{\sinh(\hbox{arccosh}  (\varrho))}+\mathcal{O}(\delta^2),
\]
it is then revealed that $\Lambda=\lambda +\widetilde{\Lambda}$  in \eqref{Kasner1solitonLambda} %follows an analogous structure, where perturbations 
can be made arbitrarily small at $t=0$, depending on a parameter $\varepsilon.$ Analogously, the conditions for the function $\phi$ and for the corresponding derivatives are obtained. The remaining hypotheses are standard and satisfied in the standard way.} 
\end{proof}

\begin{proof}[Proof of Corollary \ref{cor5p2}]
Assume \eqref{cosmological type} and \eqref{uniform}. In order to apply Theorems \ref{MT20} and \ref{MT2}, we only need to check the finite energy condition for all time. This is clear from the form of $\Lambda=\lambda +\widetilde{\Lambda}$ in \eqref{L:tilde} and $\phi$ in \eqref{Kasner1solitonPhi}: every squared time and space derivative will involve squared derivatives on $\alpha$, $\mu$ and $\beta$, which have bounded in time finite energy. Finally, \eqref{SH} ensures the last part of the energy condition.
\end{proof}

\subsection{The Einstein-Rosen Metric} We study now a metric with cylindrical symmetry where our decay results do not apply. We will choose $\alpha=r >0$ as solution to 1D waves, such that 
\[
\alpha_t =0 ,\quad \alpha_r^2 -\alpha_t^2 =1>0, \quad \frac{\alpha_t}{\alpha} =0. 
\]
The cylindrical coordinates are $x^{\mu}={t,r}$ and $x^a={\varphi, z}.$ The metric will also be diagonal ($\varphi=0$). We then have the following spacetime interval \cite{Einstein1937}:
\begin{equation}\label{stinterval}
	ds^2:= f^{(0)}(-dt^2+dr^2)+e^{u_0}(rd\phi)^2+e^{-u_0}dz^2,
\end{equation}
where $f^{(0)}>0$ and $u_0$ are functions of $t,r$ and  $u_0(t,r)$ satisfies the ``cylindrical'' wave equation
\begin{equation}\label{stinterval1}
	\partial^{2}_tu_0=\frac{1}{r}\partial_{r}(r\partial_ru_0).
\end{equation}
This is the  \textit{Einstein-Rosen diagonal form}. As in the previous case, the Belinski-Zakharov setting is
\[ 
g=\alpha  \, \diag(e^{\Lambda^{(0)}}, e^{-\Lambda^{(0)}}), \quad \alpha =r, \quad u_0(t,r) = \Lambda^{(0)} -\ln r  .
\]
Then $\Lambda^{(0)}$ is as \eqref{diag1} if $\phi^{(0)}= n\pi$. It satisfies the equation 
\begin{equation*}
	\partial^{2}_t\Lambda^{(0)}=\frac{1}{r}\partial_{r}(r\partial_r\Lambda^{(0)}).
\end{equation*}

A particular choice for $\Lambda^{(0)}$ is given by 
\begin{equation}\label{L0}
	\Lambda^{(0)}= J_0(r)\sin(t),
\end{equation}
where $J_0$ denotes the  $0$-th order Bessel function. From \eqref{L0} clearly $\Lambda^{(0)}$ does not decay in time. For this case, the densities are given as follows:
\begin{equation*}
	\begin{aligned}
		e_0&= r((\partial_t \Lambda^{(0)})^2+(\partial_r \Lambda^{(0)})^2)\\
		p_0&= -2r\partial_t \Lambda^{(0)} \partial_r \Lambda^{(0)}.
	\end{aligned}
\end{equation*}
For completeness, the one soliton solution in this case was studied by Hadad in \cite{yaronhadad_2013}, and it is given as
\begin{equation*}%\label{g0}
	g^{(1)}=\frac{1}{\mu \cosh(\gamma)}\left[\begin{array}{cc}
		r^2e^{u_0}\cosh(\gamma+\tilde{\gamma}) & \frac{r^2-\mu^2}{2\mu}\\
		\frac{r^2-\mu^2}{2\mu} & e^{-u_0}\cosh(\gamma-\tilde{\gamma})
	\end{array} \right],
\end{equation*}
where $\omega\in\mathbb R$,
\begin{equation*}
	\begin{cases*}
		\mu=\omega-t\pm\sqrt{(\omega-t)^2-r^2}\\
		\tilde{\gamma}=\ln\left(\frac{r}{|\mu|}\right)\mp \cosh^{-1}\left( \frac{\omega-t}{r}\right)\\
		\gamma= K+u_0+2\rho+\tilde{\gamma} \qquad K=\ln(C), \quad C>0\\
		\partial_t\rho=\frac{r}{\mu^2-r^2}(r\partial_t u_0+\mu \partial_r u_0)\\
		\partial_r\rho=\frac{r}{\mu^2-r^2}(r\partial_r u_0+\mu \partial_t u_0)\\
		f=C_0\sqrt{r}\frac{\mu}{\mu^2-r^2}\cosh(\gamma)f^0.
	\end{cases*}
\end{equation*}
Then, the fields $\Lambda$ and $\phi$ are given as:
\begin{equation*}
	\begin{aligned}
		\Lambda&=\cosh^{-1}\left(\frac{r}{2}e^{u_0}\cosh(\gamma+\tilde{\gamma})+\frac{e^{-u_0}}{2r}\cosh(\gamma-\tilde{\gamma})\right)\\
		\phi&= \frac{1}{2}\tan^{-1}\left(\frac{r^2-\mu^2}{4\mu\cosh(\gamma)h}\right),\\
		\mbox{where}\\
		h&= \frac{r}{2}e^{u_0}\cosh(\gamma+\tilde{\gamma})-\frac{e^{-u_0}}{2r}\cosh(\gamma-\tilde{\gamma}).
	\end{aligned}
\end{equation*}
A further study of this metric with other techniques will be done elsewhere.
%%%%%%%%%%%%%%%%%%%%%%%%%%%%%%%
\appendix

\section{Proof of the Lemma \ref{eqContinuidad0}}\label{App:B}

In this section, we prove, for completeness, the modified continuity equations \eqref{eqContinuidad}. First, let us start by writing the derivatives of the energy and momentum densities (respectively). Recall that, for this case, we have a full form for $h_1$ and $h_2$ introduced in \eqref{h1} and \eqref{h2},  as  follows:% the following functions
\[
 h_1= {\color{black}\dfrac{(\partial_x \alpha)^2+(\partial_t \alpha)^2}{\alpha^2}}+ {\color{black}4\sinh^2(\Lambda)\Big((\partial_t \phi)^2+(\partial_x\phi)^2\Big)}+{\color{black}(\partial_t\Lambda)^2+(\partial_x\Lambda)^2}
  \]
and  
\[
h_2={\color{black}\dfrac{\partial_x\alpha \partial_t \alpha}{\alpha^2}}+{\color{black}\partial_x\Lambda \partial_t\Lambda} +{\color{black}4\partial_x \phi \partial_t \phi \sinh^2(\Lambda)}.
\]
Then, we can write the energy and momentum density as:
\begin{equation*}
\begin{aligned}
&e(t,x)=\kappa \partial_t \alpha h_1-2\kappa\partial_x \alpha h_2,\\
& p(t,x)=\kappa \partial_x \alpha h_1-2\kappa\partial_t \alpha h_2,
\end{aligned}
\end{equation*}
where
\begin{equation*}
\begin{aligned}
\kappa:=&\dfrac{\alpha}{\alpha_x^2-\alpha_t^2},\quad  \mbox{and}\\
\partial_t \kappa=& \frac{\alpha_t\alpha_x^2-\alpha_t\alpha_t^2-2\alpha\alpha_x\alpha_{xt}+2\alpha\alpha_t\alpha_{tt}}{(\alpha_x^2-\alpha_t^2)^2},\\
\partial_x \kappa=&\frac{\alpha_x\alpha_x^2-\alpha_x\alpha_t^2-2\alpha\alpha_x\alpha_{xx}+2\alpha\alpha_t\alpha_{tx}}{(\alpha_x^2-\alpha_t^2)^2}.
\end{aligned}
\end{equation*}
For the derivatives of  $h_1$ and $h_2$ we have:
\begin{equation*}
\begin{aligned}
\partial_t h_1=&{\color{black}2 \dfrac{(\alpha_x\alpha_{xt}+\alpha_t\alpha_{tt})\alpha^2-\alpha\alpha_t(\alpha_x^2+\alpha_t^2)}{\alpha^4}}+{\color{black}4\sinh(2\Lambda)\Lambda_t(\phi_x^2+\phi_t^2)}\\
&{\color{black}+ 8\sinh^2(\Lambda)(\phi-x\phi_{xt}+\phi_t\phi_{tt})}+{\color{black}2\Lambda_x\Lambda_{xt}+2\Lambda_t\Lambda_{tt}}\\
\partial_x h_1=& 2 \dfrac{(\alpha_x\alpha_{xx}+\alpha_t\alpha_{tx})\alpha^2-\alpha\alpha_x(\alpha_x^2+\alpha_t^2)}{\alpha^4}+4\sinh(2\Lambda)\Lambda_x(\phi_x^2+\phi_t^2)\\
&+ 8\sinh^2(\Lambda)(\phi_x\phi_{xx}+\phi_t\phi_{tx})+2\Lambda_x\Lambda_{xx}+2\Lambda_t\Lambda_{tx}\\
\partial_th_2= &\dfrac{(\alpha_{tt}\alpha_x+\alpha_t\alpha_{xt})\alpha^2-2\alpha \alpha_t^2\alpha_x}{\alpha^2}+\Lambda_{xt}\Lambda_t+\Lambda_x\Lambda_{tt}\\
&+4\sinh(2\Lambda)\Lambda_t\phi_t\phi_x+4\sinh^2(\Lambda)(\phi_{xt}\phi_t+\phi_x\phi_{tt})\\
\partial_xh_2= &\dfrac{(\alpha_{xx}\alpha_t+\alpha_x\alpha_{xt})\alpha^2-2\alpha \alpha_x^2\alpha_t}{\alpha^2}+\Lambda_{xx}\Lambda_t+\Lambda_x\Lambda_{tx}\\
&+4\sinh(2\Lambda)\Lambda_x\phi_t\phi_x+4\sinh^2(\Lambda)(\phi_{xx}\phi_t+\phi_x\phi_{tx})
\end{aligned}
\end{equation*}
%%%%%%%%%%%%%%%%%%%%%%%%%%
The first step will be to prove the first equation in \eqref{eqContinuidad}, taking derivative in $x$ for energy density and derivative in $t$ for the momentum density, we have
\begin{equation*}
\begin{aligned}
\partial_x e(t,x)=&{\color{black}\kappa_x \alpha_t h_1-2\kappa_x\alpha_xh_2}+{\color{black}\kappa\left( \alpha_{tx}h_1+\alpha_t\partial_x h_1-2\alpha_{xx}h_2-2\alpha_x\partial_xh_2\right)}\\
=&{\color{black}Te_{\alpha}}+{\color{black}Te_{\Lambda}}+{\color{black}Te_{\phi}},\\
\partial_tp(t,x)=&{\color{black}\kappa_t \alpha_x h_1-2\kappa_t\alpha_t h_2}+{\color{black}\kappa\left( \alpha_{tx}h_1+\alpha_x\partial_t h_1-2\alpha_{tt}h_2-2\alpha_t\partial_th_2\right)}\\
=&{\color{black}Tp_{\alpha}}+{\color{black}Tp_{\Lambda}}+{\color{black}Tp_{\phi}}.
\end{aligned}
\end{equation*}
Where the terms, for example, $Te_{\alpha}, Te_{\Lambda},  Te_{\phi} $ represent the terms in $\partial_x e$ that are related with $\alpha, \Lambda$, and $\phi$ respectively,  as follows:
\begin{equation*}
\begin{aligned}
Te_{\alpha}&:=\partial_x \left( \kappa \partial_t \alpha \frac{(\partial_x \alpha)^2+(\partial_t \alpha)^2}{\alpha^2} -2\kappa\partial_x \alpha \frac{\partial_x\alpha \partial_t \alpha}{\alpha^2}\right)\\
Te_{\phi}&:=  \partial_x \left(4 \kappa \partial_t \alpha \sinh^2(\Lambda)\Big((\partial_t \phi)^2+(\partial_x\phi)^2\Big)-8\kappa\partial_x \alpha \partial_x \phi \partial_t \phi \sinh^2(\Lambda)\right)\\
Te_{\Lambda}&:= \partial_x \left( \kappa \partial_t \alpha ((\partial_t\Lambda)^2+(\partial_x\Lambda)^2) -2\kappa\partial_x \alpha \partial_x\Lambda \partial_t\Lambda \right).
\end{aligned}
\end{equation*}
 In the same way for  $Tp_{\alpha}, Tp_{\phi}$, $Tp_{\Lambda}$, this time respect to $\partial_t p$:
 \begin{equation*}
\begin{aligned}
Tp_{\alpha}&:=\partial_t \left( \kappa \partial_x \alpha \frac{(\partial_x \alpha)^2+(\partial_t \alpha)^2}{\alpha^2} -2\kappa\partial_t \alpha \frac{\partial_x\alpha \partial_t \alpha}{\alpha^2}\right)\\
Tp_{\phi}&:=  \partial_t \left(4 \kappa \partial_x \alpha \sinh^2(\Lambda)\Big((\partial_t \phi)^2+(\partial_x\phi)^2\Big)-8\kappa\partial_t \alpha \partial_x \phi \partial_t \phi \sinh^2(\Lambda)\right)\\
Tp_{\Lambda}&:= \partial_t \left( \kappa \partial_x\alpha ((\partial_t\Lambda)^2+(\partial_x\Lambda)^2) -2\kappa\partial_x \alpha \partial_t\Lambda \partial_t\Lambda \right).
\end{aligned}
\end{equation*}
Now, we are going to compute the sum of these two expressions, term by term, taking into account the structure of each term, starting by $Te_{\alpha}, Tp_{\alpha}$:
\begin{equation*}
\begin{aligned}
{\color{black}Tp_{\alpha}}=&{\color{black}\dfrac{\alpha_x\alpha_x^2\alpha_t}{(\alpha_x^2-\alpha_t^2)^2\alpha^2}}+{\color{black}\dfrac{\alpha_x\alpha_t\alpha_t^2}{(\alpha_x^2-\alpha_t^2)^2\alpha^2}}-\dfrac{2\alpha\alpha_x^2\alpha_x^2\alpha_{xt}}{(\alpha_x^2-\alpha_t^2)^2\alpha^2}-{\color{black}\dfrac{2\alpha\alpha_x^2\alpha_{xt}\alpha_t^2}{(\alpha_x^2-\alpha_t^2)^2\alpha^2}}+\dfrac{2\alpha\alpha_t\alpha_x\alpha_x^2\alpha_{tt}}{(\alpha_x^2-\alpha_t^2)^2\alpha^2}\\
&+{\color{black}\dfrac{2\alpha\alpha_t\alpha_x\alpha_t^2\alpha_{tt}}{(\alpha_x^2-\alpha_t^2)^2\alpha^2}} -{\color{black}\dfrac{2\alpha_t^2\alpha_x\alpha_t}{(\alpha_x^2-\alpha_t^2)\alpha^2}}+{\color{black}\dfrac{4\alpha\alpha_x^2\alpha_t^2\alpha_{xt}}{(\alpha_x^2-\alpha_t^2)^2\alpha^2}}-{\color{black}\dfrac{4\alpha\alpha_t\alpha_x\alpha_t^2\alpha_{tt}}{(\alpha_x^2-\alpha_t^2)^2\alpha^2}}+\dfrac{2\alpha\alpha_x^2\alpha_{xt}}{(\alpha_x^2-\alpha_t^2)\alpha^2}\\
&+\dfrac{2\alpha\alpha_x\alpha_t\alpha_{tt}}{(\alpha_x^2-\alpha_t^2)\alpha^2}-{\color{black}\dfrac{2\alpha_x\alpha_t\alpha_x^2}{(\alpha_x^2-\alpha_t^2)\alpha^2}} -{\color{black}\dfrac{2\alpha_t^2\alpha_x\alpha_t}{(\alpha_x^2-\alpha_t^2)\alpha^2}}-\dfrac{2\alpha_{tt}\alpha\alpha_x\alpha_t}{(\alpha_x^2-\alpha_t^2)\alpha^2}-\dfrac{2\alpha_t\alpha\alpha_{tt}\alpha_x}{(\alpha_x^2-\alpha_t^2)\alpha^2} \\
& -{\color{black}\dfrac{2\alpha_t^2\alpha\alpha_{xt}}{(\alpha_x^2-\alpha_t^2)\alpha^2}}+{\color{black}\dfrac{4\alpha_t\alpha_t^2\alpha_x}{(\alpha_x^2-\alpha_t^2)\alpha^2}}+{\color{black}\dfrac{\alpha\alpha_{tt}\alpha_x^2}{(\alpha_x^2-\alpha_t^2)\alpha^2}}+{\color{black}\dfrac{\alpha\alpha_{xt}\alpha_t^2}{(\alpha_x^2-\alpha_t^2)\alpha^2}}.
\end{aligned}
\end{equation*}
Arranging terms,
\begin{equation*}
\begin{aligned}
Tp_{\alpha}=& \dfrac{\alpha_x^2}{\alpha_x^2-\alpha_t^2} \left( \dfrac{\alpha\alpha_{xt}-\alpha_x\alpha_t}{\alpha^2}\right)-\dfrac{\alpha_t^2}{\alpha_x^2-\alpha_t^2}\left( \dfrac{\alpha\alpha_{xt}-\alpha_t\alpha_x}{\alpha^2}\right)+\dfrac{2\alpha\alpha_x^2\alpha_t^2\alpha_{xt}}{(\alpha_x^2-\alpha_t^2)^2\alpha^2}-\dfrac{2\alpha\alpha_t\alpha_x\alpha_{tt}\alpha_t^2}{(\alpha_x^2-\alpha_t^2)^2\alpha^2}\\
&-\dfrac{2\alpha\alpha_x^2\alpha_x^2\alpha_{xt}}{(\alpha_x^2-\alpha_t^2)^2\alpha^2}+\dfrac{2\alpha\alpha_t\alpha_x\alpha_x^2\alpha_{tt}}{(\alpha_x^2-\alpha_t^2)^2\alpha^2}+\dfrac{2\alpha\alpha_x^2\alpha_{xt}}{(\alpha_x^2-\alpha_t^2)\alpha^2}-\dfrac{2\alpha_t\alpha\alpha_{tt}\alpha_x}{(\alpha_x^2-\alpha_t^2)\alpha^2}\\
=& \dfrac{\alpha_x^2}{\alpha_x^2-\alpha_t^2}\partial_x\left( \dfrac{\alpha_t}{\alpha}\right)-\dfrac{\alpha_t^2}{\alpha_x^2-\alpha_t^2}\partial_x\left( \dfrac{\alpha_t}{\alpha}\right)-\dfrac{2\alpha_x^2\alpha\alpha_{xt}}{(\alpha_x^2-\alpha_t^2)\alpha^2}+\dfrac{2\alpha\alpha_t\alpha_x\alpha_{tt}}{(\alpha_x^2-\alpha_t^2)\alpha^2}\\
&+\dfrac{2\alpha_x^2\alpha\alpha_{xt}}{(\alpha_x^2-\alpha_t^2)\alpha^2}-\dfrac{2\alpha\alpha_t\alpha_x\alpha_{tt}}{(\alpha_x^2-\alpha_t^2)\alpha^2}
={\color{black}\partial_x\left(\dfrac{\alpha_t}{\alpha} \right)}.
\end{aligned}
\end{equation*}
Similarly,
\begin{equation*}
\begin{aligned}
{\color{black}Te_{\alpha}}= &\dfrac{\alpha_x^2}{\alpha_x^2-\alpha_t^2}\left( \dfrac{\alpha_x\alpha_t-\alpha \alpha_{tx}}{\alpha^2}\right)-\dfrac{\alpha_x\alpha_t\alpha_t^2}{(\alpha_x^2-\alpha_t^2)\alpha^2}+\dfrac{3\alpha_{tx}\alpha\alpha_t^2}{(\alpha_x^2-\alpha_t^2)\alpha^2}-{\color{black}\dfrac{2\alpha \alpha_x^2\alpha_t^2\alpha_{tx}}{(\alpha_x^2-\alpha_t^2)\alpha^2}}\\
&+{\color{black}\dfrac{2\alpha\alpha_x^2\alpha_x\alpha_t\alpha_{xx}}{(\alpha_x^2-\alpha_t^2)^2\alpha^2}}-{\color{black}\dfrac{2\alpha\alpha_x\alpha_{xx}\alpha_t\alpha_t^2}{(\alpha_x^2-\alpha_t^2)^2\alpha^2}}+{\color{black}\dfrac{2\alpha\alpha_t^2\alpha_{xt}\alpha_t^2}{(\alpha_x^2-\alpha_t^2)^2\alpha^2}}-\dfrac{2\alpha\alpha_{xx}\alpha_x\alpha_t}{s\alpha^2}\\
=&-\dfrac{\alpha_x^2}{\alpha_x^2-\alpha_t^2}\partial_x\left( \dfrac{\alpha_t}{\alpha}\right)-\dfrac{\alpha_x\alpha_t\alpha_t^2}{(\alpha_x^2-\alpha_t^2)\alpha^2}+\dfrac{3\alpha_{tx}\alpha\alpha_t^2}{(\alpha_x^2-\alpha_t^2)\alpha^2}-\dfrac{2\alpha_t^2\alpha_{tx}\alpha}{(\alpha_x^2-\alpha_t^2)\alpha^2}\\
&+\dfrac{2\alpha\alpha_{xx}\alpha_t\alpha_x}{(\alpha_x^2-\alpha_t^2)\alpha^2}-\dfrac{2\alpha\alpha_{xx}\alpha_x\alpha_t}{(\alpha_x^2-\alpha_t^2)\alpha^2}\\
=&-\dfrac{\alpha_x^2}{\alpha_x^2-\alpha_t^2}\partial_x\left( \dfrac{\alpha_t}{\alpha}\right)+\dfrac{\alpha_t^2}{\alpha_x^2-\alpha_t^2}\left( \dfrac{\alpha_{tx}\alpha-\alpha_x\alpha_t}{\alpha^2}\right)\\
=& -\dfrac{\alpha_x^2}{\alpha_x^2-\alpha_t^2}\partial_x\left( \dfrac{\alpha_t}{\alpha}\right)+\dfrac{\alpha_t^2}{\alpha_x^2-\alpha_t^2}\partial_x\left( \dfrac{\alpha_t}{\alpha}\right)\\
=&-\partial_x\left( \dfrac{\alpha_t}{\alpha}\right)\dfrac{\alpha_x^2-\alpha_t^2}{\alpha_x^2-\alpha_t^2}={\color{black}-\partial_x\left(  \dfrac{\alpha_t}{\alpha}\right)},
\end{aligned}
\end{equation*}
therefore $Te_{\alpha}+Tp_{\alpha}=0.$  We continue with the terms that depend mainly  on $\Lambda$ and its derivatives:
\begin{equation*}
\begin{aligned}
{\color{black}Te_{\Lambda}}=& {\color{black}\dfrac{\alpha_x\alpha_t\Lambda_t^2}{\alpha_x^2-\alpha_t^2}}+{\color{black}\dfrac{\alpha_x\alpha_t\Lambda_x^2}{\alpha_x^2-\alpha_t^2}}-{\color{black}\dfrac{2\alpha_x^2\Lambda_x\Lambda_t}{\alpha_x^2-\alpha_t^2}}-\dfrac{2\alpha\alpha_x\alpha_{xx}\alpha_t\Lambda_t^2}{(\alpha_x^2-\alpha_t^2)^2}-\dfrac{2\alpha\alpha_x\alpha_{xx}\alpha_t\Lambda_x^2}{(\alpha_x^2-\alpha_t^2)^2}+\dfrac{4\alpha\alpha_x^2\alpha_{xx}\Lambda_x\Lambda_t}{(\alpha_x^2-\alpha_t^2)^2}\\
&+\dfrac{\alpha\alpha_{xt}\Lambda_t^2}{\alpha_x^2-\alpha_t^2}+\dfrac{\alpha\alpha_{xt}\Lambda_x^2}{\alpha_x^2-\alpha_t^2}+{\color{black}\dfrac{2\alpha\alpha_t\Lambda_x\Lambda_{xx}}{\alpha_x^2-\alpha_t^2}}+\dfrac{2\alpha\alpha_t\Lambda_t\Lambda_{tx}}{\alpha_x^2-\alpha_t^2}-\dfrac{2\alpha\alpha_{xx}\Lambda_x\Lambda_t}{\alpha_x^2-\alpha_t^2}-{\color{black}\dfrac{2\alpha \alpha_x\Lambda_{xx}\Lambda_t}{\alpha_x^2-\alpha_t^2}}\\
&-\dfrac{2\alpha\alpha_x\Lambda_x\Lambda_{tx}}{\alpha_x^2-\alpha_t^2}+\dfrac{2\alpha\alpha_t^2\alpha_{tx}\Lambda_t^2}{(\alpha_x^2-\alpha_t^2)^2}+\dfrac{2\alpha\alpha_t^2\alpha_{tx}\Lambda_x^2}{(\alpha_x^2-\alpha_t^2)^2}-\dfrac{4\alpha\alpha_t\alpha_{tx}\alpha_x\Lambda_x\Lambda_t}{(\alpha_x^2-\alpha_t^2)^2},
\end{aligned}
\end{equation*}
and for $p$ we have:
\begin{equation*}
\begin{aligned}
{\color{black}Tp_{\Lambda}}=& {\color{black}\dfrac{\alpha_x\alpha_t\Lambda_t^2}{\alpha_x^2-\alpha_t^2}}+{\color{black}\dfrac{\alpha_x\alpha_t\Lambda_x^2}{\alpha_x^2-\alpha_t^2}}-{\color{black}\dfrac{2\alpha_x^2\Lambda_x\Lambda_t}{\alpha_x^2-\alpha_t^2}}-\dfrac{2\alpha\alpha_x^2\alpha_{xt}\alpha_t\Lambda_t^2}{(\alpha_x^2-\alpha_t^2)^2}-\dfrac{2\alpha\alpha_x^2\alpha_{xt}\alpha_t\Lambda_x^2}{(\alpha_x^2-\alpha_t^2)^2}+\dfrac{4\alpha\alpha_x\alpha_t\alpha_{xt}\Lambda_x\Lambda_t}{(\alpha_x^2-\alpha_t^2)^2}\\
&+\dfrac{\alpha\alpha_{xt}\Lambda_t^2}{\alpha_x^2-\alpha_t^2}+\dfrac{\alpha\alpha_{xt}\Lambda_x^2}{\alpha_x^2-\alpha_t^2}+{\color{black}\dfrac{2\alpha\alpha_x\Lambda_t\Lambda_{tt}}{\alpha_x^2-\alpha_t^2}}-\dfrac{2\alpha\alpha_t\Lambda_t\Lambda_{tx}}{\alpha_x^2-\alpha_t^2}-\dfrac{2\alpha\alpha_{xx}\Lambda_x\Lambda_t}{\alpha_x^2-\alpha_t^2}-{\color{black}\dfrac{2\alpha \alpha_t\Lambda_{tt}\Lambda_t}{\alpha_x^2-\alpha_t^2}}\\
&+\dfrac{2\alpha\alpha_x\Lambda_x\Lambda_{tx}}{\alpha_x^2-\alpha_t^2}+\dfrac{2\alpha\alpha_t\alpha_x\alpha_{tt}\Lambda_t^2}{(\alpha_x^2-\alpha_t^2)^2}+\dfrac{2\alpha\alpha_t\alpha_x\alpha_{tt}\Lambda_x^2}{(\alpha_x^2-\alpha_t^2)^2}-\dfrac{4\alpha\alpha_t^2\alpha_{tt}\Lambda_x\Lambda_t}{(\alpha_x^2-\alpha_t^2)^2}.
\end{aligned}
\end{equation*}
If we sum these two terms and using the first equation in the \eqref{sistema1}, we get 
\begin{equation*}
\begin{aligned}
&Te_{\Lambda}+Tp_{\Lambda}=\\
&= \frac{2(\alpha_x\Lambda_t-\alpha_t\Lambda_x)[\alpha\Lambda_{tt}-\alpha\Lambda_{xx}]}{\alpha_x^2-\alpha_t^2}+\frac{2\alpha_t\Lambda_t(\alpha_x\Lambda_t-\alpha_t\Lambda_x)}{\alpha_x^2-\alpha_t^2}- \frac{2\alpha_x\Lambda_x(\alpha_x\Lambda_t-\alpha_t\Lambda_x)}{\alpha_x^2-\alpha_t^2}+{\color{black}R}\\
&=\dfrac{2}{\alpha_x^2-\alpha_t^2}\left(\alpha_x\Lambda_t-\alpha_t\Lambda_x\right)\left[\partial_t(\alpha \Lambda_t)-\partial_x(\alpha\Lambda_x) \right]+ {\color{black}R}\\
&= \dfrac{2}{\alpha_x^2-\alpha_t^2}\left(\alpha_x\Lambda_t-\alpha_t\Lambda_x\right)\left[ 2\alpha\phi_t^2\sinh(2\Lambda)-2\alpha\phi_x^2\sinh(2\Lambda) \right] +{\color{black}R},
\end{aligned}
\end{equation*}
where ${\color{black}R}$ represents the remainder of the terms in the sum above. After simplification, we have that $R$ is actually equal to zero, indeed
\begin{equation*}
\begin{aligned}
R=& ~{}\dfrac{2\alpha_{xt}\alpha\Lambda_t^2}{(\alpha_x^2-\alpha_t^2)^2}(\alpha_t^2-\alpha_x^2)+\dfrac{2\alpha_{xt}\alpha\Lambda_x^2}{(\alpha_x^2-\alpha_t^2)^2}(\alpha_t^2-\alpha_x^2)+\dfrac{4\alpha_{tt}\alpha\Lambda_t\Lambda_x}{(\alpha_x^2-\alpha_t^2)^2}(\alpha_x^2-\alpha_t^2)+\dfrac{2\alpha_{xt}\alpha\Lambda_t^2}{\alpha_x^2-\alpha_t^2}\\
&+\dfrac{2\alpha_{xt}\alpha\Lambda_x^2}{(\alpha_x^2-\alpha_t^2)^2}-\dfrac{4\alpha_{tt}\alpha\Lambda_t\Lambda_x}{\alpha_x^2-\alpha_t^2}=0.
\end{aligned}
\end{equation*}
The last terms to simplify are the terms that depend mainly on $\phi$, first, let us start with the terms related to $\phi$ in momentum density derivatives:
\begin{equation*}
\begin{aligned}
{\color{black}Tp_{\phi}}=&~{} \dfrac{4\alpha_t\alpha_x\sinh^2(\Lambda)\phi_t^2}{\alpha_x^2-\alpha_t^2}+\dfrac{4\alpha_t\alpha_x\sinh^2(\Lambda)\phi_x^2}{\alpha_x^2-\alpha_t^2}-\dfrac{8\alpha_t^2\sinh^2(\Lambda)\phi_x\phi_t}{\alpha_x^2-\alpha_t^2}-\dfrac{8\alpha\alpha_x^2\alpha_{tx}\sinh^2(\Lambda)\phi_x^2}{(\alpha_x^2-\alpha_t^2)^2}\\
&-\dfrac{8\alpha\alpha_x^2\alpha_{tx}\sinh^2(\Lambda)\phi_t^2}{(\alpha_x^2-\alpha_t^2)^2} +\dfrac{16\alpha\alpha_x\alpha_{xt}\alpha_t\sinh^2(\Lambda)\phi_x\phi_t}{(\alpha_x^2-\alpha_t^2)^2}+\dfrac{8\alpha\alpha_t\alpha_{tt}\alpha_x\sinh^2(\Lambda)\phi_t^2}{(\alpha_x^2-\alpha_t^2)^2}\\
&+\dfrac{8\alpha\alpha_t\alpha_{tt}\alpha_x\sinh^2(\Lambda)\phi_x^2}{(\alpha_x^2-\alpha_t^2)^2}-\dfrac{16\alpha\alpha_t^2\alpha_{tt}\sinh^2(\Lambda)\phi_t\phi_x}{(\alpha_x^2-\alpha_t^2)^2}+\dfrac{4\alpha\alpha_{xt}\sinh^2(\Lambda)\phi_t^2}{\alpha_x^2-\alpha_t^2}+\dfrac{4\alpha\alpha_{xt}\sinh^2(\Lambda)\phi_x^2}{\alpha_x^2-\alpha_t^2}\\
& +\dfrac{4\alpha\alpha_x\sinh(2\Lambda)\Lambda_t\phi_x^2}{\alpha_x^2-\alpha_t^2}+\dfrac{4\alpha\alpha_x\sinh(2\Lambda)\Lambda_t\phi_t^2}{\alpha_x^2-\alpha_t^2}+\dfrac{8\alpha\alpha_x\sinh^2(\Lambda)\phi_x\phi_{xt}}{\alpha_x^2-\alpha_t^2}+\dfrac{8\alpha\alpha_x\sinh^2(\Lambda)\phi_x\phi_{tt}}{\alpha_x^2-\alpha_t^2}\\
&-\dfrac{8\alpha\alpha_{tt}\sinh^2(\Lambda)\phi_x\phi_t}{\alpha_x^2-\alpha_t^2}-\dfrac{8\alpha\alpha_t\sinh(2\Lambda)\Lambda_t\phi_x\phi_t}{\alpha_x^2-\alpha_t^2}-\dfrac{8\alpha\alpha_t\sinh^2(\Lambda)\phi_{xt}\phi_t}{\alpha_x^2-\alpha_t^2}-\dfrac{8\alpha\alpha_t\sinh^2(\Lambda)\phi_{tt}\phi_x}{\alpha_x^2-\alpha_t^2}
\end{aligned}
\end{equation*}
Now, let us pass to the terms in the derivative of the energy density
\begin{equation*}
\begin{aligned}
{\color{black}Te_{\phi}}=& \dfrac{4\alpha_t\alpha_x\sinh^2(\Lambda)\phi_t^2}{\alpha_x^2-\alpha_t^2}+\dfrac{4\alpha_t\alpha_x\sinh^2(\Lambda)\phi_x^2}{\alpha_x^2-\alpha_t^2}-\dfrac{8\alpha_x^2\sinh^2(\Lambda)\phi_x\phi_t}{\alpha_x^2-\alpha_t^2}-\dfrac{8\alpha\alpha_x\alpha_t\alpha_{xx}\sinh^2(\Lambda)\phi_t^2}{(\alpha_x^2-\alpha_t^2)^2}\\
&-\dfrac{8\alpha\alpha_x\alpha_t\alpha_{xx}\sinh^2(\Lambda)\phi_x^2}{(\alpha_x^2-\alpha_t^2)^2} +\dfrac{16\alpha\alpha_x^2\alpha_{xx}\sinh^2(\Lambda)\phi_x\phi_t}{(\alpha_x^2-\alpha_t^2)^2}+\dfrac{8\alpha\alpha_t^2\alpha_{tx}\sinh^2(\Lambda)\phi_t^2}{(\alpha_x^2-\alpha_t^2)^2}\\
&+\dfrac{8\alpha\alpha_t^2\alpha_{tx}\sinh^2(\Lambda)\phi_x^2}{(\alpha_x^2-\alpha_t^2)^2}-\dfrac{16\alpha\alpha_t\alpha_{tx}\alpha_x\sinh^2(\Lambda)\phi_t\phi_x}{(\alpha_x^2-\alpha_t^2)^2}+\dfrac{4\alpha\alpha_{xt}\sinh^2(\Lambda)\phi_t^2}{\alpha_x^2-\alpha_t^2}+\dfrac{4\alpha\alpha_{xt}\sinh^2(\Lambda)\phi_x^2}{\alpha_x^2-\alpha_t^2}\\
& +\dfrac{4\alpha\alpha_t\sinh(2\Lambda)\Lambda_x\phi_x^2}{\alpha_x^2-\alpha_t^2}+\dfrac{4\alpha\alpha_t\sinh(2\Lambda)\Lambda_x\phi_t^2}{\alpha_x^2-\alpha_t^2}+\dfrac{8\alpha\alpha_t\sinh^2(\Lambda)\phi_x\phi_{xx}}{\alpha_x^2-\alpha_t^2}+\dfrac{8\alpha\alpha_t\sinh^2(\Lambda)\phi_t\phi_{tx}}{\alpha_x^2-\alpha_t^2}\\
&-\dfrac{8\alpha\alpha_{tt}\sinh^2(\Lambda)\phi_x\phi_t}{\alpha_x^2-\alpha_t^2}-\dfrac{8\alpha\alpha_x\sinh(2\Lambda)\Lambda_x\phi_x\phi_t}{\alpha_x^2-\alpha_t^2}-\dfrac{8\alpha\alpha_{x}\sinh^2(\Lambda)\phi_{xx}\phi_t}{\alpha_x^2-\alpha_t^2}-\dfrac{8\alpha\alpha_x\sinh^2(\Lambda)\phi_{x}\phi_{tx}}{\alpha_x^2-\alpha_t^2}.
\end{aligned}
\end{equation*}
In the next step we will sum $Te_{\phi}+Tp_{\phi}+Te_{\Lambda}+Tp_{\Lambda}$, then, simplify the similar terms and cancel the corresponding terms, then we obtain the following expression:
\begin{equation*}
\begin{aligned}
& Te_{\phi}+Tp_{\phi}+Te_{\Lambda}+Tp_{\Lambda}\\
& =\dfrac{8\alpha_x\alpha_t\sinh^2(\Lambda)\phi_t^2}{\alpha_x^2-\alpha_t^2}+\dfrac{8\alpha_x\alpha_t\sinh^2(\Lambda)\phi_x^2}{\alpha_x^2-\alpha_t^2}-\dfrac{8\alpha_x^2\sinh^2(\Lambda)\phi_t\phi_x}{\alpha_x^2-\alpha_t^2}-\dfrac{8\alpha_t^2\sinh^2(\Lambda)\phi_t\phi_x}{\alpha_x^2-\alpha_t^2}\\
&+\dfrac{8\alpha\alpha_{tx}\sinh^2(\Lambda)\phi_t^2}{(\alpha_x^2-\alpha_t^2)^2}(\alpha_t^2-\alpha_x^2)+\dfrac{8\alpha\alpha_{tx}\sinh^2(\Lambda)\phi_x^2}{(\alpha_x^2-\alpha_t^2)^2}(\alpha_t^2-\alpha_x^2)\\
&+\dfrac{16\alpha\alpha_{xx}\sinh^2(\Lambda)\phi_t\phi_x}{(\alpha_x^2-\alpha_t^2)^2}(\alpha_x^2-\alpha_t^2)+\dfrac{8\alpha\alpha_{tx}\sinh^2(\Lambda)\phi_t^2}{\alpha_x^2-\alpha_t^2}+\dfrac{8\alpha\alpha_{tx}\sinh^2(\Lambda)\phi_x^2}{\alpha_x^2-\alpha_t^2}\\
&+\dfrac{4\alpha\alpha_t\sinh(2\Lambda)\Lambda_x\phi_x^2}{\alpha_x^2-\alpha_t^2}+\dfrac{4\alpha\alpha_t\sinh(2\Lambda)\Lambda_x\phi_t^2}{\alpha_x^2-\alpha_t^2}+\dfrac{4\alpha\alpha_x\sinh(2\Lambda)\Lambda_t\phi_x^2}{\alpha_x^2-\alpha_t^2}\\
&+\dfrac{4\alpha\alpha_x\sinh(2\Lambda)\Lambda_t\phi_t^2}{\alpha_x^2-\alpha_t^2}-\dfrac{16\alpha\alpha_{tt}\sinh^2(\Lambda)\phi_x\phi_t}{\alpha_x^2-\alpha_t^2}-\dfrac{8\alpha\alpha_x\sinh(2\Lambda)\Lambda_x\phi_x\phi_t}{\alpha_x^2-\alpha_t^2}\\
&-\dfrac{8\alpha\alpha_x\sinh^2(\Lambda)\phi_{xx}\phi_t}{\alpha_x^2-\alpha_t^2}+\dfrac{8\alpha\alpha_x\sinh^2(\Lambda)\phi_t\phi_{tt}}{\alpha_x^2-\alpha_t^2}-\dfrac{8\alpha\alpha_t\sinh(2\Lambda)\Lambda_t\phi_x\phi_t}{\alpha_x^2-\alpha_t^2}\\
&-\dfrac{8\alpha\alpha_t\sinh^2(\Lambda)\phi_x\phi_{tt}}{\alpha_x^2-\alpha_t^2}+\dfrac{4\alpha\alpha_x\sinh(2\Lambda)\Lambda_t\phi_t^2}{\alpha_x^2-\alpha_t^2}-\dfrac{4\alpha\alpha_x\sinh(2\Lambda)\Lambda_t\phi_x^2}{\alpha_x^2-\alpha_t^2}\\
&-\dfrac{4\alpha\alpha_t\sinh(2\Lambda)\Lambda_x\phi_t^2}{\alpha_x^2-\alpha_t^2}+\dfrac{4\alpha\alpha_t\sinh(2\Lambda)\Lambda_x\phi_x^2}{\alpha_x^2-\alpha_t^2}+\dfrac{8\alpha\alpha_t\sinh^2(\Lambda)\phi_x\phi_{xx}}{\alpha_x^2-\alpha_t^2}.
\end{aligned}
\end{equation*}
In this expression we have several terms that will cancel out. They can be gathered in such a way that we can use the second equation in the system \eqref{sistema1}. We have
\begin{equation*}
\begin{aligned}
Te_{\phi}+Tp_{\phi}+Te_{\Lambda}+Tp_{\Lambda}=&\sinh^2(\Lambda)(\alpha_t\phi_t-\alpha_x\phi_x+\alpha\phi_{tt}-\alpha\phi_{tt})\left( \frac{8\alpha_x\phi_t-8\alpha_t\phi_x}{\alpha_x^2-\alpha_t^2}\right)\\
&+\sinh(2\Lambda)(\alpha\phi_t\Lambda_t-\alpha\phi_x\Lambda_x)\left( \frac{8\alpha_x\phi_t-8\alpha_t\phi_x}{\alpha_x^2-\alpha_t^2}\right)\\
=&(\partial_t(\alpha \sinh^2 \Lambda \partial_t\phi )- \partial_x(\alpha \sinh^2 \Lambda \partial_x\phi ))\left( \frac{8\alpha_x\phi_t-8\alpha_t\phi_x}{\alpha_x^2-\alpha_t^2}\right)=0.
\end{aligned}
\end{equation*}
We conclude that:
\[
\partial_t p(t,x)+\partial_x e(t,x)=0.
\]
 In the second part of the proof, we are going to show the second equation in \eqref{eqContinuidad}. For this, we will again use the notation for grouping terms in the following way 
\begin{equation*}
\begin{aligned}
\partial_t e(t,x)=&~{}{\color{black}\gamma_t \alpha_t h_1-2\gamma_t\alpha_xh_2}+{\color{black}\gamma\left( \alpha_{tt}h_1+\alpha_t\partial_t h_1-2\alpha_{xt}h_2-2\alpha_x\partial_t h_2\right)}\\
=&~{} {\color{black}Te_{\alpha}}+{\color{black}Te_{\Lambda}}+{\color{black}Te_{\phi}},\\
\partial_x p(t,x)=& \partial_x \gamma \left(\alpha_x h_1-2\alpha_t h_2\right)+\gamma\left(\alpha_{xx}h_1+\alpha_x\partial_x h_1-2\alpha_{tx}h_2-2\alpha_t\partial_xh_2\right)\\
=&~{} {\color{black}Tp_{\alpha}}+{\color{black}Tp_{\Lambda}}+{\color{black}Tp_{\phi}},
\end{aligned}
\end{equation*}
where the terms $Te_{\alpha}, Te_{\phi}, Te_{\Lambda}$ has the same form than in the before, but, this time respect to the terms for $\partial_t e$ (or $\partial_x p$ respectively) Let us simplify each term, starting with the terms in $\alpha$:
\begin{equation*}
\begin{aligned}
{\color{black}Te_{\alpha}}=&-\dfrac{\alpha \alpha_x^2\alpha_{tt}}{(\alpha_x^2-\alpha_t^2)\alpha^2}+\dfrac{2\alpha_x^2\alpha_t^2}{(\alpha_x^2-\alpha_t^2)\alpha^2}+{\color{black}\dfrac{3\alpha\alpha_t^2\alpha_{tt}}{(\alpha_x^2-\alpha_t^2)\alpha^2}}-\dfrac{2\alpha \alpha_{tx}\alpha_x\alpha_t}{(\alpha_x^2-\alpha_t^2)\alpha^2}-{\color{black}\dfrac{2\alpha_t^2\alpha_t^2}{(\alpha_x^2-\alpha_t^2)\alpha^2}}\\
&-{\color{black}\dfrac{\alpha_t^2\alpha_x^2}{(\alpha_x^2-\alpha_t^2)\alpha^2}}+{\color{black}\dfrac{\alpha_t^2\alpha_t^2}{(\alpha_x^2-\alpha_t^2)\alpha^2}}-{\color{black}\frac{2\alpha\alpha_x\alpha_t\alpha_{xt}\alpha_x^2}{(\alpha_x^2-\alpha_t^2)^2\alpha^2}}-{\color{black}\dfrac{2\alpha \alpha_x\alpha_{xt}\alpha_t^2\alpha_t}{(\alpha_x^2-\alpha_t^2)^2\alpha^2}}+{\color{black}\dfrac{2\alpha\alpha_t^2\alpha_{tt}\alpha_x^2}{(\alpha_x^2-\alpha_t^2)^2\alpha^2}}\\
&+{\color{black}\dfrac{2\alpha\alpha_t^2\alpha_{tt}\alpha_t^2}{(\alpha_x^2-\alpha_t^2)^2\alpha^2}}
+{\color{black} \frac{4\alpha\alpha_x\alpha_x^2\alpha_{xt}\alpha_t}{(\alpha_x^2-\alpha_t^2)^2\alpha^2}}-{\color{black}\dfrac{4\alpha\alpha_t^2\alpha_{tt}\alpha_x^2}{(\alpha_x^2-\alpha_t^2)^2\alpha^2}}\\
=&-\dfrac{\alpha \alpha_x^2\alpha_{tt}}{(\alpha_x^2-\alpha_t^2)\alpha^2}+\dfrac{2\alpha_x^2\alpha_t^2}{(\alpha_x^2-\alpha_t^2)\alpha^2}+{\color{black}\dfrac{\alpha_t^2}{(\alpha_x^2-\alpha_t^2)}\partial_x\left(\dfrac{\alpha_x}{\alpha}\right)}+{\color{black}\dfrac{2\alpha\alpha_t^2\alpha_{tt}}{(\alpha_x^2-\alpha_t^2)\alpha^2}}-{\color{black}\dfrac{\alpha_t^2\alpha_t^2}{(\alpha_x^2-\alpha_t^2)\alpha^2}}\\
&-\dfrac{2\alpha \alpha_{tx}\alpha_x\alpha_t}{(\alpha_x^2-\alpha_t^2)\alpha^2}-{\color{black}\frac{2\alpha \alpha_t^2\alpha_{tt}}{(\alpha_x^2-\alpha_t^2)\alpha^2}}+{\color{black}\dfrac{2\alpha\alpha_x\alpha_{xt}\alpha_t}{(\alpha_x^2-\alpha_t^2)\alpha^2}}\\
=&-\dfrac{\alpha_t^2}{(\alpha_x^2-\alpha_t^2)}\partial_x\left(\frac{\alpha_x}{\alpha}\right)-\frac{\alpha_{tt}}{\alpha}+\frac{\alpha_t^2}{(\alpha_x^2-\alpha_t^2)}\partial_t\left(\dfrac{\alpha_t}{\alpha}\right)=-\partial_t\left(\dfrac{\alpha_t}{\alpha}\right),
\end{aligned}
\end{equation*}
for another hand, using a similar simplification as above, we obtain for $Tp_{\alpha}$ the following expression:
\begin{equation*}
\begin{aligned}
{\color{black}Tp_{\alpha}}=&\frac{\alpha_x^2\alpha_x^2}{(\alpha_x^2-\alpha_t^2)\alpha^2}+\frac{\alpha_x^2\alpha_t^2}{(\alpha_x^2-\alpha_t^2)\alpha^2}-\frac{2\alpha_x^2\alpha_t^2}{(\alpha_x^2-\alpha_t^2)\alpha^2}-\frac{2\alpha\alpha_x^2\alpha_x^2\alpha_{xx}}{(\alpha_x^2-\alpha_t^2)^2\alpha^2}-\frac{2\alpha\alpha_x^2\alpha_t^2\alpha_{xx}}{(\alpha_x^2-\alpha_t^2)^2\alpha^2}\\
&+\frac{4\alpha\alpha_x^2\alpha_t^2\alpha_{xx}}{(\alpha_x^2-\alpha_t^2)^2\alpha^2}+\frac{2\alpha\alpha_t\alpha_x\alpha_{xt}\alpha_x^2}{(\alpha_x^2-\alpha_t^2)^2\alpha^2}+ \frac{2\alpha\alpha_t\alpha_x\alpha_{xt}\alpha_t^2}{(\alpha_x^2-\alpha_t^2)^2\alpha^2}-\frac{4\alpha\alpha_t\alpha_x\alpha_{xt}\alpha_t^2}{(\alpha_x^2-\alpha_t^2)^2\alpha^2}+\frac{\alpha\alpha_{xx}\alpha_x^2}{(\alpha_x^2-\alpha_t^2)\alpha^2}\\
&+\frac{\alpha\alpha_{xx}\alpha_t^2}{(\alpha_x^2-\alpha_t^2)\alpha^2}+\frac{2\alpha\alpha_{xx}\alpha_x^2}{(\alpha_x^2-\alpha_t^2)\alpha^2} 
+\frac{2\alpha\alpha_{tx}\alpha_x\alpha_t}{(\alpha_x^2-\alpha_t^2)\alpha^2}-\frac{2\alpha_x^2\alpha_x^2}{(\alpha_x^2-\alpha_t^2)\alpha^2}-\frac{2\alpha_x^2\alpha_t^2}{(\alpha_x^2-\alpha_t^2)\alpha^2}\\
&-\frac{2\alpha\alpha_x\alpha_t\alpha_{tx}}{(\alpha_x^2-\alpha_t^2)\alpha^2}
-\frac{2\alpha\alpha_t^2\alpha_{xx}}{(\alpha_x^2-\alpha_t^2)\alpha^2}-\frac{2\alpha\alpha_t\alpha_x\alpha_{tx}}{(\alpha_x^2-\alpha_t^2)\alpha^2}+\frac{4\alpha_t^2\alpha_x^2}{(\alpha_x^2-\alpha_t^2)\alpha^2}\\
=&~{}\partial_x\left( \dfrac{\alpha_x}{\alpha}\right). 
\end{aligned}
\end{equation*}
Now, for the terms in $Te_{\Lambda}$ and $Tp_{\Lambda}$ we have
 \begin{equation*}
\begin{aligned}
{\color{black}Te_{\Lambda}}=&~{} \dfrac{\alpha_t^2\Lambda_t^2}{\alpha_x^2-\alpha_t^2}+\dfrac{\alpha_t^2\Lambda_x^2}{\alpha_x^2-\alpha_t^2}-\dfrac{2\alpha_x\alpha_t\Lambda_x\Lambda_t}{\alpha_x^2-\alpha_t^2}-\dfrac{2\alpha\alpha_x\alpha_{tx}\alpha_t\Lambda_t^2}{(\alpha_x^2-\alpha_t^2)^2}-\dfrac{2\alpha\alpha_x\alpha_{tx}\alpha_t\Lambda_x^2}{(\alpha_x^2-\alpha_t^2)^2}+\dfrac{4\alpha\alpha_x^2\alpha_{xt}\Lambda_x\Lambda_t}{(\alpha_x^2-\alpha_t^2)^2}\\
&+\dfrac{2\alpha\alpha_{tt}\alpha_t^2\Lambda_t^2}{\alpha_x^2-\alpha_t^2}+\dfrac{2\alpha\alpha_t^2\alpha_{tt}\Lambda_x^2}{\alpha_x^2-\alpha_t^2}-\dfrac{4\alpha\alpha_t\alpha_x\alpha_{tt}\Lambda_x\Lambda_{t}}{\alpha_x^2-\alpha_t^2}+\dfrac{\alpha\alpha_{tt}\Lambda_t^2}{\alpha_x^2-\alpha_t^2}+\dfrac{\alpha\alpha_{xx}\Lambda_x^2}{\alpha_x^2-\alpha_t^2}-\dfrac{2\alpha \alpha_x\Lambda_x\Lambda_{tt}}{\alpha_x^2-\alpha_t^2}\\
&+\dfrac{2\alpha\alpha_x\alpha_t\Lambda_x\Lambda_{tx}}{\alpha_x^2-\alpha_t^2}+\dfrac{2\alpha\alpha_t\Lambda_t\Lambda_{tt}}{(\alpha_x^2-\alpha_t^2)^2}-\dfrac{2\alpha\alpha_{tx}\Lambda_x\Lambda_t}{(\alpha_x^2-\alpha_t^2)^2}-\dfrac{2\alpha\alpha_x\Lambda_{xt}\Lambda_t}{(\alpha_x^2-\alpha_t^2)^2},
\end{aligned}
\end{equation*}
and, for $p$ we have:
\begin{equation*}
\begin{aligned}
{\color{black}Tp_{\Lambda}}=&~{} \dfrac{\alpha_x^2\Lambda_t^2}{\alpha_x^2-\alpha_t^2}+\dfrac{\alpha_x^2\Lambda_x^2}{\alpha_x^2-\alpha_t^2}-\dfrac{2\alpha_x\alpha_t\Lambda_x\Lambda_t}{\alpha_x^2-\alpha_t^2}-\dfrac{2\alpha\alpha_x^2\alpha_{xx}\Lambda_t^2}{(\alpha_x^2-\alpha_t^2)^2}-\dfrac{2\alpha\alpha_x^2\alpha_{xx}\Lambda_x^2}{(\alpha_x^2-\alpha_t^2)^2}+\dfrac{4\alpha\alpha_x\alpha_t\alpha_{xx}\Lambda_x\Lambda_t}{(\alpha_x^2-\alpha_t^2)^2}\\
&+\dfrac{2\alpha\alpha_{xt}\alpha_x\alpha_t\Lambda_t^2}{\alpha_x^2-\alpha_t^2}+\dfrac{2\alpha\alpha_{xt}\alpha_x\alpha_t\Lambda_x^2}{\alpha_x^2-\alpha_t^2}-\dfrac{4\alpha\alpha_t^2\alpha_{xt}\Lambda_t\Lambda_{x}}{\alpha_x^2-\alpha_t^2}-\dfrac{\alpha\alpha_{xx}\Lambda_t^2}{\alpha_x^2-\alpha_t^2}+\dfrac{\alpha\alpha_{xx}\Lambda_x^2}{\alpha_x^2-\alpha_t^2}+\dfrac{2\alpha \alpha_x\Lambda_{xx}\Lambda_x}{\alpha_x^2-\alpha_t^2}\\
&+\dfrac{2\alpha\alpha_x\Lambda_t\Lambda_{tx}}{\alpha_x^2-\alpha_t^2}-\dfrac{2\alpha\alpha_{xt}\Lambda_t\Lambda_x}{(\alpha_x^2-\alpha_t^2)^2}-\dfrac{2\alpha\alpha_t\alpha_x\Lambda_t\Lambda_{xx}}{(\alpha_x^2-\alpha_t^2)^2}-\dfrac{2\alpha\alpha_t\Lambda_x\Lambda_{tx}}{(\alpha_x^2-\alpha_t^2)^2}.
\end{aligned}
\end{equation*}
If we sum up the terms and using the first equation in the system \eqref{sistema1} and after simplification we obtain
\begin{equation*}
\begin{aligned}
Te_{\Lambda}+Tp_{\Lambda}=&   \left(\dfrac{2\alpha_t\Lambda_t-2\alpha_x\Lambda_x}{\alpha_x^2-\alpha_t^2} \right)(\alpha_t\Lambda_t-\alpha_x\Lambda-\alpha\Lambda_{xx}+\alpha\Lambda_{tt})\\
&+\dfrac{(\alpha_t^2-\alpha_x^2)\Lambda_x^2}{\alpha_x^2-\alpha_t^2}+\dfrac{(\alpha_x^2-\alpha_t^2)\Lambda_t^2}{\alpha_x^2-\alpha_t^2}\\
 =& \left(\frac{2\alpha_t\Lambda_t-2\alpha_x\Lambda_x}{\alpha_x^2-\alpha_t^2} \right)\left( \alpha_t\Lambda_t+\alpha\Lambda_{tt}-\alpha_x\Lambda_x-\alpha\Lambda_{xx}\right)+\Lambda_t^2-\Lambda_x^2\\
= &  \left(\frac{2\alpha_t\Lambda_t-2\alpha_x\Lambda_x}{\alpha_x^2-\alpha_t^2} \right)\left( 2\alpha\phi_t^2\sinh(2\Lambda)-2\alpha\phi_x^2\sinh(2\Lambda)\right)+\Lambda_t^2-\Lambda_x^2.
\end{aligned}
\end{equation*}
To conclude the result, let us simplify the terms in $\phi$:
\begin{equation*}
\begin{aligned}
{\color{black}Tp_{\phi}}=&~{} \dfrac{4\alpha_x^2\sinh^2(\Lambda)\phi_t^2}{\alpha_x^2-\alpha_t^2}+\dfrac{4\alpha_x^2\sinh^2(\Lambda)\phi_x^2}{\alpha_x^2-\alpha_t^2}-\dfrac{8\alpha_t\alpha_x\sinh^2(\Lambda)\phi_x\phi_t}{\alpha_x^2-\alpha_t^2}-\dfrac{8\alpha\alpha_x^2\alpha_{xx}\sinh^2(\Lambda)\phi_t^2}{(\alpha_x^2-\alpha_t^2)^2}\\
&-\dfrac{8\alpha\alpha_x^2\alpha_{xx}\sinh^2(\Lambda)\phi_x^2}{(\alpha_x^2-\alpha_t^2)^2} +\dfrac{16\alpha\alpha_x\alpha_{xx}\alpha_t\sinh^2(\Lambda)\phi_x\phi_t}{(\alpha_x^2-\alpha_t^2)^2}+\dfrac{8\alpha\alpha_t\alpha_{tx}\alpha_x\sinh^2(\Lambda)\phi_t^2}{(\alpha_x^2-\alpha_t^2)^2}\\
&+\dfrac{8\alpha\alpha_t\alpha_{tx}\alpha_x\sinh^2(\Lambda)\phi_x^2}{(\alpha_x^2-\alpha_t^2)^2}-\dfrac{16\alpha\alpha_t^2\alpha_{tx}\sinh^2(\Lambda)\phi_t\phi_x}{(\alpha_x^2-\alpha_t^2)^2}+\dfrac{4\alpha\alpha_{xx}\sinh^2(\Lambda)\phi_t^2}{\alpha_x^2-\alpha_t^2}+\dfrac{4\alpha\alpha_{xx}\sinh^2(\Lambda)\phi_x^2}{\alpha_x^2-\alpha_t^2}\\
& +\dfrac{4\alpha\alpha_x\sinh(2\Lambda)\Lambda_x\phi_x^2}{\alpha_x^2-\alpha_t^2}+\dfrac{4\alpha\alpha_x\sinh(2\Lambda)\Lambda_x\phi_t^2}{\alpha_x^2-\alpha_t^2}+\dfrac{8\alpha\alpha_x\sinh^2(\Lambda)\phi_x\phi_{xx}}{\alpha_x^2-\alpha_t^2}+\dfrac{8\alpha\alpha_x\sinh^2(\Lambda)\phi_t\phi_{tt}}{\alpha_x^2-\alpha_t^2}\\
&-\dfrac{8\alpha\alpha_{tx}\sinh^2(\Lambda)\phi_x\phi_t}{\alpha_x^2-\alpha_t^2}-\dfrac{8\alpha\alpha_t\sinh(2\Lambda)\Lambda_x\phi_x\phi_t}{\alpha_x^2-\alpha_t^2}-\dfrac{8\alpha\alpha_t\sinh^2(\Lambda)\phi_{xx}\phi_t}{\alpha_x^2-\alpha_t^2}-\dfrac{8\alpha\alpha_t\sinh^2(\Lambda)\phi_{tx}\phi_x}{\alpha_x^2-\alpha_t^2}.
\end{aligned}
\end{equation*}
Now, let us pass to the terms in the derivative of the energy density
\begin{equation*}
\begin{aligned}
{\color{black}Te_{\phi}}=&~{} \dfrac{4\alpha_t^2\sinh^2(\Lambda)\phi_t^2}{\alpha_x^2-\alpha_t^2}+\dfrac{4\alpha_t^2\sinh^2(\Lambda)\phi_x^2}{\alpha_x^2-\alpha_t^2}-\dfrac{8\alpha_x\alpha_t\sinh^2(\Lambda)\phi_x\phi_t}{\alpha_x^2-\alpha_t^2}-\dfrac{8\alpha\alpha_x\alpha_t\alpha_{xt}\sinh^2(\Lambda)\phi_t^2}{(\alpha_x^2-\alpha_t^2)^2}\\
&-\dfrac{8\alpha\alpha_x\alpha_t\alpha_{xt}\sinh^2(\Lambda)\phi_x^2}{(\alpha_x^2-\alpha_t^2)^2} +\dfrac{16\alpha\alpha_x^2\alpha_{xt}\sinh^2(\Lambda)\phi_x\phi_t}{(\alpha_x^2-\alpha_t^2)^2}+\dfrac{8\alpha\alpha_t^2\alpha_{tt}\sinh^2(\Lambda)\phi_t^2}{(\alpha_x^2-\alpha_t^2)^2}\\
&+\dfrac{8\alpha\alpha_t^2\alpha_{xx}\sinh^2(\Lambda)\phi_x^2}{(\alpha_x^2-\alpha_t^2)^2}-\dfrac{16\alpha\alpha_t\alpha_{tt}\alpha_x\sinh^2(\Lambda)\phi_t\phi_x}{(\alpha_x^2-\alpha_t^2)^2}+\dfrac{4\alpha\alpha_{tt}\sinh^2(\Lambda)\phi_t^2}{\alpha_x^2-\alpha_t^2}+\dfrac{4\alpha\alpha_{tt}\sinh^2(\Lambda)\phi_x^2}{\alpha_x^2-\alpha_t^2}\\
& +\dfrac{4\alpha\alpha_t\sinh(2\Lambda)\Lambda_t\phi_x^2}{\alpha_x^2-\alpha_t^2}+\dfrac{4\alpha\alpha_t\sinh(2\Lambda)\Lambda_t\phi_t^2}{\alpha_x^2-\alpha_t^2}+\dfrac{8\alpha\alpha_t\sinh^2(\Lambda)\phi_x\phi_{xt}}{\alpha_x^2-\alpha_t^2}+\dfrac{8\alpha\alpha_t\sinh^2(\Lambda)\phi_t\phi_{tt}}{\alpha_x^2-\alpha_t^2}\\
&-\dfrac{8\alpha\alpha_{xt}\sinh^2(\Lambda)\phi_x\phi_t}{\alpha_x^2-\alpha_t^2}-\dfrac{8\alpha\alpha_x\sinh(2\Lambda)\Lambda_t\phi_x\phi_t}{\alpha_x^2-\alpha_t^2}-\dfrac{8\alpha\alpha_{x}\sinh^2(\Lambda)\phi_{xt}\phi_t}{\alpha_x^2-\alpha_t^2}-\dfrac{8\alpha\alpha_x\sinh^2(\Lambda)\phi_{x}\phi_{tt}}{\alpha_x^2-\alpha_t^2}.
\end{aligned}
\end{equation*}
 The last step is to perform the sum of the all term in $\Lambda,\phi$, and simplify similar terms. We will use the second equation in the system  \eqref{sistema1}, then, we can write
\begin{equation*}
\begin{aligned}
& Te_{\phi}+Tp_{\phi}+Te_{\Lambda}+Tp_{\Lambda}\\
&= \dfrac{4\alpha_t^2\sinh^2(\Lambda)\phi_t^2}{\alpha_x^2-\alpha_t^2}+\dfrac{4\alpha_t^2\sinh^2(\Lambda)\phi_x^2}{\alpha_x^2-\alpha_t^2}+\dfrac{4\alpha\alpha_t\sinh(2\Lambda)\Lambda_t\phi_x^2}{\alpha_x^2-\alpha_t^2}+\Lambda_t^2-\Lambda_x^2\\
&\quad +\dfrac{4\alpha\alpha_t\sinh(2\Lambda)\Lambda_t\phi_t^2}{\alpha_x^2-\alpha_t^2} -\dfrac{4\alpha\alpha_t\sinh(2\Lambda)\Lambda_t\phi_x^2}{\alpha_x^2-\alpha_t^2}-\dfrac{16\alpha_t\alpha_x\sinh^2(\Lambda)\phi_x\phi_t}{\alpha_x^2-\alpha_t^2}\\
&\quad+\dfrac{8\alpha\alpha_t\sinh^2(\Lambda)\phi_t\phi_{tt}}{\alpha_x^2-\alpha_t^2}-\dfrac{8\alpha\alpha_x\sinh(2\Lambda)\Lambda_t\phi_x\phi_t}{\alpha_x^2-\alpha_t^2}+\dfrac{16\alpha\alpha_{xt}\sinh^2(\Lambda)\phi_x\phi_t}{\alpha_x^2-\alpha_t^2} \\
&\quad-\dfrac{8\alpha\alpha_x\sinh^2(\Lambda)\phi_x\phi_{tt}}{\alpha_x^2-\alpha_t^2}+\dfrac{4\alpha_x^2\sinh^2(\Lambda)\phi_t^2}{\alpha_x^2-\alpha_t^2}
+\dfrac{4\alpha_x^2\sinh^2(\Lambda)\phi_x^2}{\alpha_x^2-\alpha_t^2}\\
&\quad+\dfrac{4\alpha\alpha_x\sinh(2\Lambda)\Lambda_x\phi_x^2}{\alpha_x^2-\alpha_t^2}+\dfrac{4\alpha\alpha_x\sinh(2\Lambda)\Lambda_x\phi_t^2}{\alpha_x^2-\alpha_t^2}+\dfrac{8\alpha\alpha_x\sinh^2(\Lambda)\phi_x\phi_{xx}}{\alpha_x^2-\alpha_t^2}\\
&\quad -\dfrac{8\alpha\alpha_t\sinh(2\Lambda)\Lambda_x\phi_x\phi_t}{\alpha_x^2-\alpha_t^2}-\dfrac{8\alpha\alpha_t\sinh^2(\Lambda)\phi_t\phi_{xx}}{\alpha_x^2-\alpha_t^2}+\dfrac{4\alpha\alpha_t\sinh(2\Lambda)\Lambda_t\phi_t^2}{\alpha_x^2-\alpha_t^2}\\
&\quad-\dfrac{4\alpha\alpha_x\sinh(2\Lambda)\Lambda_x\phi_t^2}{\alpha_x^2-\alpha_t^2}+\dfrac{4\alpha\alpha_x\sinh(2\Lambda)\Lambda_x\phi_x^2}{\alpha_x^2-\alpha_t^2}-\dfrac{16\alpha\alpha_{tx}\sinh^2(\Lambda)\phi_t\phi_x}{\alpha_x^2-\alpha_t^2} \\
&=4\sinh^2(\Lambda)\left(\phi_t^2-\phi_x^2\right) +\left(\dfrac{8\alpha_t\phi_t-8\alpha_x\phi_x}{\alpha_x^2-\alpha_t^2} \right)(\alpha\phi_t\Lambda_t-\alpha\phi_t\Lambda_t)\sinh(2\Lambda)\\
&\quad +\left(\dfrac{8\alpha_t\phi_t-8\alpha_x\phi_x}{\alpha_x^2-\alpha_t^2} \right) \sinh^2(\Lambda)(\alpha_t\phi_t-\alpha_x\phi_x+\alpha\phi_{tt}-\alpha\phi_{xx})+\Lambda_t^2-\Lambda_x^2\\
&=4\sinh^2(\Lambda)\left(\phi_t^2-\phi_x^2\right)+\Lambda_t^2-\Lambda_x^2.
\end{aligned}
\end{equation*}
We can then conclude that:
\begin{equation*}
\begin{aligned}
{\color{black}\partial_t e(t,x)+\partial_x p(t,x)}=& 4\sinh^2(\Lambda)\left(\phi_t^2-\phi_x^2\right)+\Lambda_t^2-\Lambda_x^2
 +\partial_x\left(\dfrac{\alpha_x}{\alpha}\right) -\partial_t\left(\dfrac{\alpha_t}{\alpha}\right),
\end{aligned}
\end{equation*}
as desired.

\subsection*{Conflict of interest} The authors declare no conflict of interest present during the elaboration of this work and its posterior publication.

\subsection*{Contributions} Both authors contributed with substancial efforts to the realization of this work.

%%%%%%%%%%%%%%%%%%%%%%%%%%%%%%%%
\bibliographystyle{unsrt}

{\small

}

\end{document}